\newcommand{\NN}{\mathbb{N}}
\newcommand{\ZZ}{\mathbb{Z}}
\DeclareMathOperator{\im}{im}
\DeclareMathOperator{\id}{id}
\newcommand{\frs}{f_{r,s}}
\newcommand{\bfrs}{\bar{f}_{r,s}}
\newcommand{\tfrs}{\tilde{f}_{r,s}}
\newcommand{\mas}{m\otimes a_1 \otimes \dots \otimes a_n}
\newcommand{\Hom}{\mathrm{Hom}}
\newcommand{\quot}[2]{\left.\raisebox{.2em}{$#1$}\middle/\raisebox{-.2em}{$#2$}\right.}
\newcommand{\Tor}{\mathrm{Tor}}
\theoremstyle{plain}
\newtheorem{theorem}{Theorem}[section]
\newtheorem{prop}[theorem]{Proposition}
\newtheorem{lemma}[theorem]{Lemma}
\newtheorem{cor}[theorem]{Corollary} 
\theoremstyle{definition}
\newtheorem{definition}[theorem]{Definition}
\theoremstyle{remark}
\newtheorem{remark}[theorem]{Remark}
\newtheorem{example}[theorem]{Example}
\numberwithin{equation}{section}
\begin{document}
\setlength{\parindent}{0cm}
 \title{A primer on $A_\infty$-algebras and their Hochschild homology}
 \author{Stephan Mescher}
 \address{Ruhr-Universit\"at Bochum \\ Fakult\"at f\"ur Mathematik \\ Universit\"atsstra\ss{}e 150 \\ 44801 Bochum (Germany)}
 \email{Stephan.Mescher@ruhr-uni-bochum.de}
 
 \begin{abstract}
We present an elementary and self-contained construction of $A_\infty$-algebras, $A_\infty$-bimodules and their Hochschild homology and cohomology groups. In addition, we discuss the cup product in Hochschild cohomology and the spectral sequence of the length filtration of a Hochschild chain complex. 

$A_\infty$-structures arise naturally in the study of based loop spaces and the geometry of manifolds, in particular in Lagrangian Floer theory and Morse homology. In several geometric situations, Hochschild homology may be used to describe homology groups of free loop spaces, see e.g. \cite{Jones}, \cite{Goodwillie} or \cite{SeidelHochschild}.

The objective of this article is not to introduce new material, but to give a unified and coherent discussion of algebraic results from several sources. It further includes detailed proofs of all presented results. 
\end{abstract}

 \date{\today}
 \maketitle
 \tableofcontents

 \section*{Introduction}

$A_\infty$-algebras, also known as strongly homotopy associative algebras, are generalizations of associative algebras. They are obtained by formalizing and refining the notion that the product of an algebra is associative \emph{up to homotopy}. They were introduced by James Stasheff in \cite{StasheffII} for the study of based loop spaces and further used in homotopy theory by several authors. In the geometry of manifolds, $A_\infty$-algebras and their generalizations to $A_\infty$-categories were introduced into Morse homology and Floer homology in symplectic geometry by Kenji Fukaya in \cite{FukayaAinfty} (see also \cite{SeidelBook} for a detailed discussion). In his talk at the ICM 1994, Maxim Kontsevich used $A_\infty$-structures in the formulation of his homological version of the mirror symmetry conjecture, see \cite{KontsevichHMS}.

The author's aim is to give an introduction to $A_\infty$-algebras and their Hochschild (co)\-ho\-mo\-lo\-gy which is as elementary and as self-contained as possible. While $A_\infty$-algebras are introduced and well explained in several places in the literature, e.g. in \cite{KellerIntro}, \cite{GetzlerJones}, \cite{TradlerInner} or \cite{KontsevichSoibelmanNotes}, and while their Hochschild homology has also been described on several occasions, e.g. in \cite[Section 5]{SeidelSubalgebras} or \cite[Section 7]{KontsevichSoibelmanNotes}, the author was not able to obtain a source which discusses the topics contained in this article both in an elementary and in a complete way, especially including all necessary sign computations. This article is intended to provide a remedy. 

\bigskip

After presenting the definitions and some basic constructions of $A_\infty$-algebras and $A_\infty$-bimodules in Section \ref{SectionAinftyalgebras}, we define the Hochschild homology of $A_\infty$-algebras in Section \ref{SectionHochschildHomology} and show that morphisms of $A_\infty$-bimodules induce homomorphisms between the corresponding Hochschild homology groups. The largest part of Section \ref{SectionHochschildHomology} is occupied by the proof of the latter statement and by the proof of Theorem \ref{HochschildDifferential}, stating that the differential of the Hochschild chain complex is indeed a differential.

In Section \ref{SectionHochschildCohomology} we define the notion of Hochschild cohomology of $A_\infty$-algebras and provide a detailed discussion of its duality with Hochschild homology. Additionally, we explicitly describe the case that the coefficient $A_\infty$-bimodule is given by the algebra itself. Afterwards, we define the cup product structure of Hochschild cohomology in Section \ref{SectionHochschildCupProduct} and show that it is induced by a chain map defined on the Hochschild cochain complex. 

We conclude this article by discussing the spectral sequence of the length filtration of a Hochschild chain complex in detail in Section \ref{SectionLengthFiltration}. This spectral sequence has useful convergence properties. Moreover, its first page has a concise description which we will use to prove a simple criterion for certain maps to induce isomorphisms of Hochschild homology groups.

\bigskip 

\textit{Throughout this article let $R$ denote a commutative ring with unit. By an $R$-module, we always mean a bimodule over $R$. All tensor products appearing in this article refer to tensor products of $R$-modules. A graded $R$-module is always assumed to be graded over $\ZZ$.}

\section*{Acknowledgements}
This article is a byproduct of the author's Ph.D. thesis in preparation, written under the guidance of Prof. Dr. Matthias Schwarz at the University of Leipzig and partially supported by a scholarship from the Deutsche Telekom Foundation. The author thanks Matthias Schwarz and Alberto Abbondandolo for valuable discussions on $A_\infty$-algebras and their applications in the geometry of manifolds.

\section{\texorpdfstring{$A_\infty$-algebras and -bimodules}{A-infinity-algebras and -bimodules}}
\label{SectionAinftyalgebras}

We begin by giving an elementary definition of an $A_\infty$-algebra. Note that our sign conventions coincide with those in \cite{SeidelBook}, \cite{SeidelSubalgebras} and \cite{GetzlerJones}, but differ from the conventions used in \cite{TradlerInner} and \cite{TradlerBV}. 

\begin{definition} \index{$A_\infty$-algebra} \nomenclature{$\mu_n$}{$n$-th order multiplication of an $A_\infty$-algebra}
 A graded $R$-module $A = \bigoplus_{j \in \ZZ} A_j$ equipped with a family of homomorphisms of graded $R$-modules
\begin{equation*}
 \mu_n: A^{\otimes n} \to A \ \ , \quad \quad \deg \mu_n = 2-n, \quad \text{for every } n \in \NN,
\end{equation*}
will be called \emph{an $A_\infty$-algebra over $R$} if the following equation is satisfied for every $r \in \NN$ and every $a_1,\dots,a_r \in A$:
\begin{equation}
\label{Ainftyequation}
\sum_{\stackrel{n_1,n_2 \in \NN}{n_1+n_2=r+1}} \sum_{i=1}^{r+1-n_1}(-1)^{\maltese_1^{i-1}}\mu_{n_2}(a_1,\dots,a_{i-1},\mu_{n_1}(a_i,\dots,a_{i+n_1-1}),a_{i+n_1},\dots,a_r) = 0 \ ,
\end{equation} 
where \footnote{The use of the Maltese cross for the coefficients defining the signs in this article happens in accordance with the notation of the works of Mohammed Abouzaid and Paul Seidel, e.g. \cite{SeidelBook} or \cite{AbouzaidCriterion}. In particular, the author distances himself from any political meaning or implication of this symbol.} \nomenclature{$\maltese_i^j$}{sum of the reduced indices from the $(i+1)$-st to the $(j+1)$-st element of a tensor product $M \otimes A^{\otimes n}$}
\begin{equation} 
 \label{EqDefMalteseij}
 \maltese_i^{j} := \maltese_i^{j}(a_1,\dots,a_r)  := \sum_{q=i}^{j} \mu(x_q) -(j-i+1) 
\end{equation} 
for all $i,j \in \{1,2,\dots,r\}$ with $i \leq j$ and where for every $k \in \ZZ$ we write $\mu(a) = k$ iff $a \in A_k$. For a fixed $r \in \NN$ we refer to equation (\ref{Ainftyequation}) as \emph{the $r$-th defining equation} of the $A_\infty$-algebra $(A,(\mu_n)_{n \in \NN})$. \index{defining equations!of an $A_\infty$-algebra}
\end{definition}

\begin{example}
 \begin{enumerate}
  \item Every differential graded algebra (DGA) over $R$, i.e. an associative graded algebra over $R$ which comes with a differential satisfying the graded Leibniz rule, can be given the structure of an $A_\infty$-algebra. We simply put $\mu_1$ to be the differential, $\mu_2$ to be the algebra multiplication and $\mu_n$ to be zero for every $n \geq 3$. In this case, the second defining equation reduces to the graded Leibniz rule, the third defining equation is equivalent to the associativity of the multiplication and all defining equations of higher order simply vanish on both sides.
 \item Stasheff has shown in \cite{StasheffI} and \cite{StasheffII} that the singular chain complex of a based loop space can always be equipped with the structure of an $A_\infty$-algebra. The map $\mu_1$ is given (up to inverting the grading) by the differential of the complex. The map $\mu_2$ is given by the Pontryagin product, i.e. the map induced on singular chains by the composition of loops. While it is easy to see that the Pontryagin product is associative up to a chain homotopy, Stasheff introduced $A_\infty$-algebras to show that this statement can be strongly refined.
 \end{enumerate}
\end{example}

\begin{remark}
 It is possible to give a slightly more concise and elegant definition of an $A_\infty$-algebra and all other notions defined in this section in terms of the usual coalgebra structure on the tensor algebra of $A$. This method was introduced by Getzler and Jones in \cite{GetzlerJones} and further considered by Tradler in \cite{TradlerInner} and \cite{TradlerBV} and several other authors. 
\end{remark}

\emph{Throughout the rest of this section let $(A, (\mu_n)_{n \in \NN})$ be an $A_\infty$-algebra over $R$. }

\begin{definition} \index{reduced index}
Let $M = \bigoplus_{j \in \ZZ} M_j$ be a graded $R$-module.
\begin{enumerate}
 \item For $a \in A$, we put $\mu(a):=j$ iff $a \in A_j$ and call $\mu(a)$ the \emph{index} of $a$. We further call $\|a\| := \mu(a)-1$ the \emph{reduced index} of $a$.
 \item For $m \in M$, we put $\mu_M(m):=j$ iff $m \in M_j$ and call $\mu_M(m)$ the \emph{index} of $m$. We further call $\|m\| := \mu_M(m)-1$ the \emph{reduced index} of $m$.
\end{enumerate} 
\end{definition}

In this notation we have $\displaystyle\maltese_i^j = \sum_{q=i}^j \|a_q\|$ for all $i$ and $j$.

\begin{remark}
 The signs appearing in the defining equations of $A$ are best understood in terms of reduced indices. We consider the \emph{shifted} cochain complex $A[1]=\bigoplus_{j \in \ZZ} A[1]_j$, given by
 $$A[1]_j = A_{j+1} \qquad \forall j \in \ZZ \ , $$
 whose differential is given by $\mu_1$. In other words, the elements of $A[1]_j$ are those elements of $A$ whose \emph{reduced} index equals $j$, i.e. it holds that 
 \begin{equation}
 \label{EqIndexsA}
 \mu_{A[1]}(a) = \|a\| 
 \end{equation}
 for every $a \in A$. If one considers the operation $\mu_d$ as defined on $(A[1])^{\otimes d}$ instead of $A^{\otimes d}$, then the signs in the defining equations of the $A_\infty$-algebra $A$ will be obtained by following the usual Koszul sign convention. 
\end{remark}

The following notion was introduced by Ezra Getzler and John D.S. Jones in \cite{GetzlerJones} and by Martin Markl in \cite{Markl}.

\begin{definition}
\label{DefAinftybimodule} \index{$A_\infty$-bimodule}
 A graded $R$-module $M$ equipped with a family of maps $(\mu^M_{r,s})_{r,s \in \NN_0}$ where
\begin{equation*}
 \mu^M_{r,s}: A^{\otimes r} \otimes M \otimes A^{\otimes s} \to M
\end{equation*} \nomenclature{$\mu^M_{r,s}$}{the $(r,s)$-type multiplication of the $A_\infty$-bimodule $M$}	
is an $R$-module homomorphism with $\deg \mu^M_{r,s} = 1-r-s$ will be called \emph{an $A_\infty$-bimodule over $A$} if the following equation holds for every $r,s \in \NN_0$, $m \in M$ and $a_1,\dots,a_{r+s} \in A$:
\begin{align*}
&\sum_{\stackrel{r_1 \in \NN_0, \ r_2 \in \NN}{r_1+r_2=r+1}} \sum_{i=1}^{r_1} (-1)^{\maltese_1^{i-1}} \mu_{r_1,s}^M(a_1,\dots,a_{i-1},\mu_{r_2}(a_i,\dots,a_{i+r_2-1}),a_{i+r_2},\dots,a_r,m,a_{r+1},\dots,a_{r+s}) \\
&+ \sum_{\stackrel{r_1,r_2 \in \NN_0}{r_1+r_2=r}} \sum_{\stackrel{s_1,s_2 \in \NN_0}{s_1+s_2=s}} (-1)^{\maltese_1^{r_1}} \mu^M_{r_1,s_1}(a_1,\dots,a_{r_1},\mu^M_{r_2,s_2}(a_{r_1+1},\dots,a_r,m,a_{r+1},\dots,a_{r+s_2}),\dots,a_{r+s}) \\
&+ \sum_{\stackrel{s_1 \in \NN_0,s_2 \in \NN}{s_1+s_2=s+1}} \sum_{j=1}^{s_1} (-1)^{\maltese_1^{r-j+1}+\mu_M(m) } \\ &\qquad \mu_{r,s_1}^M (a_1,\dots,a_r,m,a_{r+1},\dots,a_{r+j-1},\mu_{s_2}(a_{r+j},\dots,a_{r+j+s_2-1}),a_{r+j+s_2},\dots,a_{r+1}) = 0 \ .
\end{align*}
We refer to this equation as the \emph{defining equation of type $(r,s)$ for the $A_\infty$-bimodule $M$}. \index{defining equations!of an $A_\infty$-bimodule}
\end{definition}

Note that the defining equation of type $(0,0)$ is equivalent to the map $$\mu^M := \mu^M_{0,0}: M \to M$$ being a differential of degree $+1$ on $M$. 

\begin{remark} \label{RemarkBimoduleonA1}  \index{$A_\infty$-bimodule!structure on $A[1]$}
It follows from \eqref{EqIndexsA} that $A[1]$ is an $A_{\infty}$-bimodule over $A$ with the operations defined by 
\begin{align*}
&\mu^{A[1]}_{r,s}: A^{\otimes r} \otimes A[1] \otimes A^{\otimes s} \to A[1] \ , \\ 
&\mu^{A[1]}_{r,s}(a_1,\dots,a_r,a_0,a_{r+1},\dots,a_{r+s}) = \mu_{r+s+1}(a_1,\dots,a_r,a_0,a_{r+1},\dots,a_{r+s}) \ ,
\end{align*}
for all $r,s \in \NN_0$. 
\end{remark}

Next we construct a slightly more sophisticated example which is taken from \cite[Section 4]{AbouzaidCriterion}. Consider the tensor product $A \otimes A$ of graded $R$-modules. In general, the tensor product of two $A_\infty$-algebras can \emph{not} be given the structure of an $A_\infty$-algebra, see \cite[Section 5.2]{KontsevichSoibelmanNotes}. Nevertheless, we show that $A \otimes A$ will always admit the structure of an $A_\infty$-bimodule over $A$ if we equip it with the grading \index{tensor product of $A_\infty$-algebras}
$$\mu_{A \otimes A}(b_1 \otimes b_2) = \|b_1\|+\|b_2\| \ , $$
i.e. if we actually consider the product grading on $A[1] \otimes A[1]$. 

Define maps $\mu^\otimes_{r,s}$ by 
\begin{align*}
 &\mu^\otimes_{r,s}:A^{\otimes r} \otimes (A\otimes A)\otimes A^{\otimes s} \to A \otimes A \ , \ \ \mu^\otimes_{r,s} = 0 \quad \text{for } \  r,s>0 \ , \\
 &\mu^\otimes_{r,0}: A^{\otimes r} \otimes (A \otimes A) \to A \otimes A \ , \\
 &a_1 \otimes \dots \otimes a_r \otimes (b_1 \otimes b_2) \mapsto \mu_{r+1}(a_1,\dots,a_r,b_1) \otimes b_2 \quad \text{for } \  r>0 \ , \\
&\mu^\otimes_{0,s}: (A \otimes A) \otimes A^{\otimes s} \to A \otimes A \ , \\
&(b_1 \otimes b_2) \otimes a_1 \otimes \dots \otimes a_s \mapsto (-1)^{\|b_1\|} b_1 \otimes \mu_{s+1}(b_2,a_1,\dots,a_s)\quad \text{for } s > 0 \ ,  \\
&\mu^\otimes_{0,0}:A \otimes A \to A \otimes A \ , \quad b_1 \otimes b_2 \mapsto \mu_1(b_1) \otimes b_2 + (-1)^{\|b_1\|} b_1 \otimes \mu_1(b_2) \ .
\end{align*}

\begin{theorem}[\cite{AbouzaidCriterion}, Proposition 4.7] \index{$A_\infty$-bimodule!structure on $A\otimes A$}
\label{AinftyTensorProduct}
 $(A\otimes A, (\mu^\otimes_{r,s})_{r,s \in \NN_0})$ is an $A_\infty$-bimodule over $A$.
\end{theorem}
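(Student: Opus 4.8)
The plan is to verify the two conditions of Definition \ref{DefAinftybimodule}: first that $\deg \mu^\otimes_{r,s} = 1-r-s$, and then the defining equation of type $(r,s)$ for all $r,s \in \NN_0$. The degree condition is immediate from $\deg \mu_n = 2-n$ and the product grading $\mu_{A \otimes A}(b_1 \otimes b_2) = \|b_1\| + \|b_2\|$; for example $\mu^\otimes_{r,0}$ applies $\mu_{r+1}$ to the first factor, shifting $\|b_1\|$ by $1-r$ and leaving $b_2$ fixed, and the Koszul signs $(-1)^{\|b_1\|}$ play no role for degrees.

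For the defining equation, the decisive structural fact is that $\mu^\otimes_{r,s} = 0$ whenever $r,s > 0$. Because of this, only few terms in the three sums survive, and each surviving term has the form $P \otimes Q$, where $P$ is built from $b_1$ and the left entries $a_1,\dots,a_r$ by applying multiplications $\mu_n$, and $Q$ is built analogously from $b_2$ and the right entries $a_{r+1},\dots,a_{r+s}$. I would sort these terms into \emph{left-type} terms, in which $b_2$ is untouched, \emph{right-type} terms, in which $b_1$ is untouched, and \emph{mixed} terms, in which both factors are acted on nontrivially.

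Since every right entry can only be absorbed into the second factor by some $\mu^\otimes_{0,k}$, a left-type term can occur only when $s=0$; in that case I claim the left-type terms assemble precisely into $E \otimes b_2$, where $E$ is the left-hand side of the $(r+1)$-th defining equation of $A$ evaluated on $(a_1,\dots,a_r,b_1)$. Concretely, the first sum of the bimodule equation produces the terms of $E$ in which the inner multiplication avoids $b_1$, while the terms in which it hits $b_1$ come from the compositions $\mu^\otimes_{r_1,0}(\dots,\mu^\otimes_{r_2,0}(\dots),\dots)$ for $2 \leq n_1 \leq r$ and from the two compositions involving $\mu^\otimes_{0,0}$ for the boundary values $n_1 = 1$ and $n_1 = r+1$. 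By \eqref{Ainftyequation} we have $E = 0$. Symmetrically, right-type terms occur only when $r=0$ and assemble into $b_1 \otimes E'$, with $E'$ the $(s+1)$-th defining equation of $A$ on $(b_2,a_{r+1},\dots,a_{r+s})$, which again vanishes; the case $r=s=0$ is the overlap of the two, where $\mu^\otimes_{0,0} \circ \mu^\otimes_{0,0}$ yields $\mu_1^2(b_1) \otimes b_2 + b_1 \otimes \mu_1^2(b_2)$.

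It then remains to see that the mixed terms cancel among themselves, and I expect this sign bookkeeping to be the main obstacle. Each mixed term $\mu_p(\dots,b_1) \otimes \mu_q(b_2,\dots)$ arises in exactly two ways, according to whether the first or the second factor is acted on first, and the two contributions carry opposite signs -- this is the Koszul rule expressing that the left action on $b_1$ and the right action on $b_2$ graded-commute. Concretely, the cancellation results from the interplay of the global prefactor $(-1)^{\maltese_1^{r_1}}$ with the sign $(-1)^{\|b_1\|}$ carried by the right-hand operations and with the shift of $\|b_1\|$ caused by acting on the first factor, the net discrepancy being a single sign, reflecting that every $\mu_n$ has degree $+1$ on $A[1]$. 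The same care is needed to match the signs of the assembled equations $E$ and $E'$ with those in \eqref{Ainftyequation}; the key point for $E'$ is that $\mu_M(b_1 \otimes b_2) = \|b_1\| + \|b_2\|$, so that the factor $(-1)^{\|b_1\|}$ from the operations combines with $(-1)^{\mu_M(m)}$ in the third sum to leave exactly the sign $(-1)^{\|b_2\|}$ that accounts for $b_2$ standing to the left of the inner multiplication in $E'$.
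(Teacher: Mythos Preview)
Your proposal is correct and follows essentially the same strategy as the paper: exploit that $\mu^\otimes_{r,s}=0$ for $r,s>0$, and reduce the surviving terms to instances of the defining equations \eqref{Ainftyequation} of $A$ applied to $(a_1,\dots,a_r,b_1)$ on the left and to $(b_2,a_{r+1},\dots,a_{r+s})$ on the right. The paper carries out the case $r>0$, $s=0$ by an explicit computation of the first and second sums and then omits $r=0$, $s>0$ as symmetric; your left-type/right-type/mixed bookkeeping is just a more conceptual packaging of the same computation.

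One small point where your organization is actually a bit cleaner: for $r,s>0$ the paper simply asserts that the left-hand side of the defining equation vanishes, whereas in fact the second sum contributes two nonzero ``mixed'' terms, namely $\mu^\otimes_{0,s}\bigl(\mu^\otimes_{r,0}(\dots,b_1\otimes b_2),\dots\bigr)$ and $\mu^\otimes_{r,0}\bigl(\dots,\mu^\otimes_{0,s}(b_1\otimes b_2,\dots)\bigr)$, which cancel by precisely the Koszul argument you describe. Your framework makes this cancellation explicit rather than leaving it implicit.
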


\begin{proof}
By definition, $\mu^\otimes_{r,s}$ vanishes if both $r > 0$ and $s > 0$ and so does the left-hand side of the defining equation of an $A_\infty$-bimodule. If $r = 0$, $s=0$, the defining equation is equivalent to $\mu^\otimes_{0,0} \circ \mu^\otimes_{0,0} = 0$, which is clear from the definition of $\mu^\otimes_{0,0}$ as a product differential.

Assume that $r > 0$ and $s=0$. Then the third sum on the left-hand side of the defining equation of type $(r,0)$ vanishes by definition, while for every $a_1,\dots,a_r,b_1,b_2 \in A$ the first sum is given by:
\begin{align}
 &\sum_{\stackrel{r_1 \in \NN_0,r_2 \in \NN}{r_1+r_2=r+1}} \sum_{i=1}^{r_1}(-1)^{\maltese_1^{i-1}} \mu^\otimes_{r_1,0}(a_1,\dots,a_{i-1},\mu_{r_2}(a_i,\dots,a_{i+r_2-1}),a_{i+r_2},\dots,a_r,b_1 \otimes b_2) \notag \\
 &=\sum_{\stackrel{r_1,r_2 \in \NN}{r_1+r_2=r+2}} \sum_{i=1}^{r_1-1}(-1)^{\maltese_1^{i-1}} \mu_{r_1}(a_1,\dots,a_{i-1},\mu_{r_2}(a_i,\dots,a_{i+r_2-1}),\dots,a_r,b_1) \otimes b_2 \ . \label{bimodul1}
\end{align}
The reader may notice the similarity of this sum to the left-hand side of the $(r+1)$-st defining equation of the $A_\infty$-algebra $A$ for the elements $a_1,\dots,a_r,b_1 \in A$ tensorized with $b_2$, except that some parts of the defining equation are still missing. We will see that these parts are contained in the second sum of the defining equation of type $(r,s)$ which we next write down explicitly.
\begin{align}
 &\sum_{\stackrel{r_1,r_2 \in \NN_0}{r_1+r_2=r}} (-1)^{\maltese_1^{r_1}} \mu^\otimes_{r_1,0}(a_1,\dots,a_{r_1},\mu^\otimes_{r_2,0}(a_{r_1+1},\dots,a_r,b_1 \otimes b_2)) \notag \\
 &= (-1)^{\maltese_1^r} \mu^\otimes_{r,0}(a_1,\dots,a_r,\mu^\otimes_{0,0}(b_1\otimes b_2)) + \mu^\otimes_{0,0}(\mu_{r,0}(a_1,\dots,a_r,b_1 \otimes b_2)) \notag  \\
 &\quad + \sum_{\stackrel{r_1,r_2 \in \NN}{r_1+r_2=r}} (-1)^{\maltese_1^{r_1}} \mu^\otimes_{r_1,0}(a_1,\dots,a_{r_1},\mu^\otimes_{r_2,0}(a_{r_1+1},\dots,a_r,b_1 \otimes b_2)) \notag  \\
&= (-1)^{\maltese_1^r} \mu^\otimes_{r,0}(a_1,\dots,a_r,\mu_1(b_1)\otimes b_2 + (-1)^{\|b_1\|} b_1 \otimes \mu_1(b_2)) \notag \\  
&\quad + \mu^\otimes_{0,0}(\mu_{r+1}(a_1,\dots,a_r,b_1) \otimes b_2) \notag \\
 &\quad \quad + \sum_{\stackrel{r_1,r_2 \in \NN}{r_1+r_2=r}} (-1)^{\maltese_1^{r_1}} \mu_{r_1+1}(a_1,\dots,a_{r_1},\mu_{r_2+1}(a_{r_1+1},\dots,a_r,b_1)) \otimes b_2 \notag \\
&= (-1)^{\maltese_1^r} \mu_{r+1}(a_1,\dots,a_r,\mu_1(b_1))\otimes b_2 + (-1)^{\maltese_1^r +\|b_1\|} \mu_{r+1}(a_1,\dots,a_r, b_1) \otimes \mu_1(b_2)  \notag \\
&\quad + \mu_1(\mu_{r+1}(a_1,\dots,a_r,b_1)) \otimes b_2 + (-1)^{\|\mu_{r+1}(a_1,\dots,a_r,b_1)\|} \mu_{r+1}(a_1,\dots,a_r,b_1)\otimes \mu_1(b_2) \notag\\
 &\quad + \sum_{\stackrel{r_1,r_2 \geq 2}{r_1+r_2=r+2}} (-1)^{\maltese_1^{r_1-1}} \mu_{r_1}(a_1,\dots,a_{r_1-1},\mu_{r_2}(a_{r_1},\dots,a_r,b_1)) \otimes b_2 \notag \\
&= \sum_{r_1 \geq 2, r_2 \in \NN} (-1)^{\maltese_1^{r-1}}  \mu_{r_1}(a_1,\dots,a_{r_1-1},\mu_{r_2}(a_{r_1},\dots,a_r,b_1))\otimes b_2 \label{bimodul2} \\
&\qquad \qquad \qquad + \mu_1(\mu_{r+1}(a_1,\dots,a_r,b_1))\otimes b_2 \ , \notag
\end{align}
where we have used the fact that
\begin{equation*}
 \|\mu_{r+1}(a_1,\dots,a_r,b_1) \| = \sum_{q=1}^r \|a_q\| + \|b_1\|+1 = \maltese_1^r + \|b_1\|+1 \ , 
 \end{equation*}
since $\deg \mu_{r+1} = 1-r$. Thus, we have shown that for $A\otimes A$, the left-hand side of the defining equation for $A_\infty$-bimodules is given by the sum of (\ref{bimodul1}) and (\ref{bimodul2}), which amounts to:
\begin{align*}
 &\sum_{\stackrel{r_1,r_2 \in \NN}{r_1+r_2=r+2}} \sum_{i=1}^{r_1-1} (-1)^{\maltese_1^{i-1}} \mu_{r_1}(a_1,\dots,a_{i-1},\mu_{r_2}(a_i,\dots,a_{i+r_2-1}),a_{i+r_2},\dots,a_r,b_1) \otimes b_2 \\
 &\quad + \sum_{\stackrel{r_1,r_2 \in \NN}{r_1+r_2=r+2}} (-1)^{\maltese_1^{r_1-1}} \mu_{r_1}(a_1,\dots,a_{r_1-1},\mu_{r_2}(a_{r_1},\dots,a_r,b_1))\otimes b_2 \\
 &= \sum_{\stackrel{r_1,r_2 \in \NN}{r_1+r_2=r+2}} \sum_{i=1}^{r_1} (-1)^{\maltese_1^{i-1}} \mu_{r_1}(a_1,\dots,a_{i-1},\mu_{r_2}(a_i,\dots,a_{i+r_2-1}),a_{i+r_2},\dots,a_r,b_1) \otimes b_2 \ .
\end{align*}
The latter sum equals the left-hand side of the $A_\infty$-equation for $r+1$ tensorized with $b_2$. Since $A$ is an $A_\infty$-algebra, the sum vanishes.

We omit the remaining case, $r=0$ and $s>0$, since it is proven analogously.
\end{proof}

The following theorem shows that the dual $R$-module of an $A_\infty$-bimodule can always be equipped with the structure of an $A_\infty$-bimodule over the same $A_\infty$-algebra. \index{dual of an $A_\infty$-bimodule}

\begin{theorem}[\cite{TradlerInner}, Lemma 3.9] \nomenclature{$M_{-*}$, $M^{-*}$}{graded module $M$ with inverted grading}
\label{DualBimodule}
Let $\left(M, (\mu^M_{r,s})_{r,s \in \NN_0}\right)$ be an $A_\infty$-bimodule over $R$ and let $M^*= \Hom_R(M,R)$ denote its dual $R$-bimodule. Then $\left(M^{-*}, (\mu_{r,s}^*)_{r,s \in \NN_0}\right)$ is an $A_\infty$-bimodule over $A$, where $M^{-*}$ denotes $M^*$ with inverted grading, i.e. $$M^{-j}:= (M^{-*})^j := \Hom_R(M_{-j},R) \ , $$ and where \nomenclature{$\mu^*_{r,s}$}{dual $A_\infty$-bimodule operation of type $(r,s)$} $\mu^*_{r,s}: A^{\otimes r} \otimes M^{-*} \otimes A^{\otimes s} \to M^{-*}$ is for all $r,s \in \NN_0$, $a_1,\dots,a_{r+s} \in A$, $m \in M$ and $m^* \in M^{-*}$ defined by
\begin{equation*}
 \left(\mu^*_{r,s}(a_1,\dots,a_r,m^*,a_{r+1},\dots,a_{r+s}) \right)(m) = (-1)^{\ddag_{r,s}} m^* \left(\mu^M_{s,r}(a_{r+1},\dots,a_{r+s},m,a_1,\dots,a_r) \right) 
\end{equation*}
with \nomenclature{$\ddag_{r,s}$}{exponents of the signs defining $\mu^*_{r,s}$}
\begin{align*}
 \ddag_{r,s} &:= \ddag_{r,s}(a_1,\dots,a_r,m^*,a_{r+1},\dots,a_{r+s},m) \\
 &:= \maltese_1^r \cdot \left( \maltese_{r+1}^{r+s} + \mu_{M^*}(m^*) + \mu_M(m)\right)  + \mu_{M^*}(m^*)+1 \ .
\end{align*}
\end{theorem}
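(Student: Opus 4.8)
The plan is to verify directly that $(M^{-*}, (\mu^*_{r,s})_{r,s \in \NN_0})$ satisfies the defining equation of type $(r,s)$ for an $A_\infty$-bimodule over $A$. Since the elements of $M^{-*}$ are $R$-linear functionals on $M$, it suffices to evaluate the left-hand side of that equation on an arbitrary $m \in M$ and to show that the resulting element of $R$ vanishes. First I would write out the three sums of the type-$(r,s)$ defining equation for $M^{-*}$ applied to $m^* \in M^{-*}$: the sum with an internal algebra operation $\mu_{r_2}$ inserted among the arguments to the left of $m^*$ (carrying the sign $(-1)^{\maltese_1^{i-1}}$), the nested sum $\mu^*_{r_1,s_1} \circ \mu^*_{r_2,s_2}$ (carrying $(-1)^{\maltese_1^{r_1}}$), and the sum with an internal $\mu_{s_2}$ among the arguments to the right of $m^*$. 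Applying the defining formula for $\mu^*$ turns every term into a value of some $\mu^M$ paired against $m$, accompanied by the sign $(-1)^{\ddag}$.

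The central observation is that dualization interchanges the two sides of the bimodule: by definition $\mu^*_{r,s}$ is built from $\mu^M_{s,r}$ after moving the block $(a_{r+1},\dots,a_{r+s})$ to the left of $m$ and the block $(a_1,\dots,a_r)$ to its right. Hence, after evaluation on $m$, an insertion to the left of $m^*$ becomes an insertion to the right of $m$, and vice versa, so the three sums above should correspond, respectively, to the sum of $M$'s type-$(s,r)$ defining equation with an internal $\mu$ to the right of $m$, to its nested $\mu^M \circ \mu^M$ sum, and to the sum with an internal $\mu$ to the left of $m$ — all evaluated at the reordered word $(a_{r+1},\dots,a_{r+s},m,a_1,\dots,a_r)$. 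I would therefore reindex each sum, relabelling the summation bounds under the exchange $r \leftrightarrow s$, and check that the collection of terms coincides termwise with $m^*$ applied to the entire left-hand side of the type-$(s,r)$ defining equation for $M$. Since $M$ is an $A_\infty$-bimodule, that expression is zero, and the claim follows. The degenerate case $r=s=0$ is subsumed here: it reduces to $\mu^*_{0,0}\circ\mu^*_{0,0}=0$, the dual of $(\mu^M)^2=0$.

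The genuine difficulty is the sign bookkeeping. For the two unnested sums one must show that the explicit Koszul factors of the dual equation, multiplied by the single factor $(-1)^{\ddag}$ coming from each $\mu^*$, reproduce the corresponding factors of $M$'s type-$(s,r)$ equation once the arguments have been cyclically reordered; this reduces to tracking how a $\maltese$-sum over a relabelled tensor word transforms and how the contributions $\mu_M(m)$ and $\mu_{M^*}(m^*)$ inside $\ddag$ interact with the reduced index of $m$. The delicate case is the nested sum, where evaluating $\mu^*_{r_1,s_1} \circ \mu^*_{r_2,s_2}$ on $m$ produces the two signs $(-1)^{\ddag}$ from the outer and inner dual operations together with the explicit $(-1)^{\maltese_1^{r_1}}$, while the inner dual operation must first be assigned its degree $\mu_{M^*}$ (which itself depends on the reduced indices of the intervening arguments and on $1-r_2-s_2$) before it is inserted into the outer one. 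I expect the bulk of the work to consist of showing that the quadratic cross-term $\maltese_1^r \cdot \left(\maltese_{r+1}^{r+s} + \mu_{M^*}(m^*) + \mu_M(m)\right)$ in the definition of $\ddag_{r,s}$ is precisely what makes these three signs collapse to the single $\maltese$-factor demanded by the middle sum of $M$'s defining equation for the relabelled left block of arguments.
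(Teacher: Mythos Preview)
Your plan is correct and follows essentially the same route as the paper: evaluate the type-$(r,s)$ equation for $M^{-*}$ on an arbitrary $m$, unfold the definitions of $\mu^*$ to rewrite everything in terms of $\mu^M$ with the blocks of $a_i$'s swapped, and then verify that the resulting signs differ from those of $M$'s type-$(s,r)$ equation by a common factor. The paper carries this out exactly as you describe, isolating the three exponents $S_1,S_2,S_3$ and checking each congruence modulo two, with the nested $\mu^*\circ\mu^*$ term indeed requiring the most care.
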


\begin{proof}
 (We repeat Tradler's proof and add a sign computation since we use different sign conventions.) 

 We exhibit the operations $\left(\mu^*_{r,s}\right)_{r,s \in \NN_0}$ to fulfill the $A_\infty$-bimodule equation. We need to show for all $r,s \in \NN_0$, $a_1,\dots,a_{r+s} \in A$, $m \in M$ and $m^* \in M^{-*}$ that
\begin{align*}
&\sum_{r_1+r_2=r+1} \sum_{i=1}^{r_1} (-1)^{\maltese_1^{i-1}} \mu_{r_1,s}^*(a_1,\dots,\mu_{r_2}(a_i,\dots,a_{i+r_2-1}),a_{i+r_2},\dots,a_r,m^*,a_{r+1},\dots,a_{r+s}) (m)\\
&+ \sum_{r_1+r_2=r} \sum_{s_1+s_2=s} (-1)^{\maltese_1^{r_1}} \mu^*_{r_1,s_1}(a_1,\dots,\mu^*_{r_2,s_2}(a_{r_1+1},\dots,a_r,m^*,a_{r+1},\dots,a_{r+s_2}),\dots,a_{r+s})(m) \\
&+ \sum_{s_1+s_2=s+1} \sum_{j=1}^{s_1} (-1)^{\maltese_1^{r+j-1}+\mu(m^*)} \\ &\quad \mu_{r,s_1}^* (a_1,\dots,a_r,m^*,a_{r+1},\dots,a_{r+j-1},\mu_{s_2}(a_{r+j},\dots,a_{r+j+s_2-1}),a_{r+j+s_2},\dots,a_{r+1})(m) \stackrel{!}{=} 0 \ .
\end{align*}
where $\mu(m^*) := \mu_{M^*}(m^*)$. By definition of the $\mu^*_{r,s}$, the left-hand side of this equation is given by
\begin{align*}
 &\sum_{r_1+r_2 = r+1} \sum_{i=1}^{r_1} (-1)^{\maltese_1^{i-1}+\ddag_{r_1,s}(a_1,\dots,\mu_{r_2}(a_i,\dots,a_{i+r_2-1}),\dots,a_r,m^*,a_{r+1},\dots,a_{r+s},m)} \\ 
&\qquad m^* \left(\mu^M_{s,r_1}(a_{r+1},\dots,a_{r+s},m,a_1,\dots,\mu_{r_2}(a_i,\dots,a_{i+r_2-1}),\dots,a_r) \right) \\
&+ \sum_{r_1+r_2=r} \sum_{s_1+s_2=s} (-1)^{\maltese_1^{r_1}+\ddag_{r_1,s_1}(a_1,\dots,a_{r_1},\mu^*_{r_2,s_2}(a_{r_1+1},\dots,a_r,m^*,a_{r+1},\dots,a_{r+s_2}),a_{r+s_2+1},\dots,a_{r+s},m)} \\
&\qquad \left(\mu^*_{r_2,s_2}(a_{r_1+1},\dots,a_r,m^*,a_{r+1},\dots,a_{r+s_2}) \right) \left(\mu^M_{s,r_1}(a_{r+s_2+1},\dots,a_{r+s},m,a_1,\dots,a_{r_1})\right) \\
&+\sum_{s_1+s_2=s+1} \sum_{j=1}^{s_1} (-1)^{\maltese_1^{r+j-1}+\mu(m^*)+\ddag_{r,s_1}(a_1,\dots,a_r,m^*,a_{r+1},\dots,\mu_{s_2}(a_{r+j},\dots,a_{r+j+s_2-1}),\dots,a_{r+s},m)} \\
&\qquad m^* \left( \mu^M_{s_1,r}(a_{r+1},\dots,a_{r+j-1},\mu_{s_2}(a_{r+j},\dots,a_{r+j+s_2-1}),a_{r+j+s_2},\dots, a_{r+s},m,a_1,\dots,a_r)\right) \\
&=\sum_{r_1+r_2 = r+1} \sum_{i=1}^{r_1} (-1)^{\maltese_1^{i-1}+\ddag_{r_1,s}(a_1,\dots,\mu_{r_2}(a_i,\dots,a_{i+r_2-1}),\dots,a_r,m^*,a_{r+1},\dots,a_{r+s},m)} \\
&\qquad  m^* \left(\mu^M_{s,r_1}(a_{r+1},\dots,a_{r+s},m,a_1,\dots,\mu_{r_2}(a_i,\dots,a_{i+r_2-1}),\dots,a_r) \right) \\
&+ \sum_{r_1+r_2=r} \sum_{s_1+s_2=s} \\ 
&(-1)^{\maltese_1^{r_1}+\ddag_{r_1,s_1}(a_1,\dots,\mu^*_{r_2,s_2}(a_{r_1+1},\dots,m^*,\dots,a_{r+s_2}),\dots,a_{r+s},m)+ \ddag_{r_2,s_2}(a_{r_1+1},\dots,m^*,\dots,a_{r+s_2},\mu^M_{s_1,r_1}(a_{r+s_2+1},\dots,m,\dots,a_{r_1}))}  \\
&\qquad m^*\left(\mu^M_{s_2,r_2}(a_{r+1},\dots,a_{r+s_2},\mu^M_{s_1,r_1}(a_{r+s_2+1},\dots,a_{r+s},m,a_1,\dots,a_{r_1}),a_{r_1+1},\dots,a_r) \right) \\
&+\sum_{s_1+s_2=s+1} \sum_{j=1}^{s_1} (-1)^{\maltese_1^{r+j-1}+\mu(m^*)+\ddag_{r,s_1}(a_1,\dots,a_r,m^*,a_{r+1},\dots,\mu_{s_2}(a_{r+j},\dots,a_{r+j+s_2-1}),\dots,a_{r+s},m)} \\
&\qquad m^* \left( \mu^M_{s_1,r}(a_{r+1},\dots,a_{r+j-1},\mu_{s_2}(a_{r+j},\dots,a_{r+j+s_2-1}),a_{r+j+s_2},\dots, a_{r+s},m,a_1,\dots,a_r)\right)  \ ,
\end{align*}
Clearly this sum vanishes for every $m^* \in M^{-*}$ if and only if
\begin{align*}
&\sum_{r_1+r_2 = r+1} \sum_{i=1}^{r_1} (-1)^{S_1} \mu^M_{s,r_1}(a_{r+1},\dots,a_{r+s},m,a_1,\dots,\mu_{r_2}(a_i,\dots,a_{i+r_2-1}),\dots,a_r) \\
&+ \sum_{r_1+r_2=r} \sum_{s_1+s_2=s} (-1)^{S_2} \mu^M_{s_2,r_2}(a_{r+1},\dots,\mu^M_{s_1,r_1}(a_{r+s_2+1},\dots,a_{r+s},m,a_1,\dots,a_{r_1}),\dots,a_r) \\
&+\sum_{s_1+s_2=s+1} \sum_{j=1}^{s_1} (-1)^{S_3} \mu^M_{s_1,r}(a_{r+1},\dots,\mu_{s_2}(a_{r+j},\dots,a_{r+j+s_2-1}),\dots, a_{r+s},m,a_1,\dots,a_r) = 0 \ ,
\end{align*}
where we denote the exponents of $(-1)$ in the previous computation by $S_1$, $S_2$ and $S_3$, respectively. 


Since $(M, (\mu^M_{r,s})_{r,s \in \NN_0})$ is an $A_\infty$-bimodule over $A$, the latter equality holds true if we can show that modulo 2, we have
\begin{equation}
\label{SignsDualBimod}
\begin{aligned}
 S_1 &\equiv \maltese_{r+1}^{r+s} + \maltese_1^{i-1} + \mu_M(m)+k \ ,  \\
 S_2 &\equiv \maltese_{r+1}^{r+s_2} + k \ , \qquad S_3 \equiv \maltese_{r+1}^{r+j-1} + k \ , 
\end{aligned}
\end{equation}
for some $k \in \ZZ$. In this case, multiplying the desired equation with $(-1)^k$ yields the defining equation of type $(r,s)$ for the $A_\infty$-bimodule $M$. So it only remains to check the signs. 

Concerning $S_1$, we compute that
\begin{align*}
 &\ddag_{r_1,s}(a_1,\dots,\mu_{r_1}(a_i,\dots,a_{i+r_1-1}),\dots,a_r,m^*,a_{r+1},\dots,a_{r+s},m) \\
 &=\Bigl( \sum_{q=1}^{i-1} \|a_q\| + \|\mu_{r_2}(a_i,\dots,a_{i+r_2-1})\| + \sum_{q=i+r_2}^r \|a_q\|\Bigr) \left(\maltese_{r+1}^{r+s} + \mu(m^*)+ \mu_M(m) \right) \\
 &\phantom{booooooooooooooooooooooooooooooooooooooooooooooooooooooo} + \mu(m^*)+1 \\
&\equiv \left( \maltese_1^r + 1\right) \left(\maltese_{r+1}^{r+s} + \mu(m^*) + \mu_M(m) \right) + \mu(m^*)+1\\
&\equiv \maltese_{r+1}^{r+s} + \mu_M(m)+ \mu(m^*)+ \ddag_{r,s}(a_1,\dots,a_r,m^*,a_{r+1},\dots,a_{r+s},m) 
\end{align*}
and therefore by definition of $S_1$: 
\begin{equation*}
 S_1 \equiv \maltese_{r+1}^{r+s} +\maltese_1^{i-1} + \mu_M(m)+k_0 
\end{equation*}
if we put $k_0 := \mu(m^*)+\ddag_{r,s}$. This shows the first line of (\ref{SignsDualBimod}). Considering the sign given by $S_2$, we first compute
\begin{align*}
 &\ddag_{r_1,s_1}(a_1,\dots,a_{r_1},\mu^*_{r_2,s_2}(a_{r_1+1},\dots,a_r,m^*,a_{r+1},\dots,a_{r+s_2}),a_{r+s_2+1},\dots,a_{r+s},m) \\
&\equiv \maltese_1^{r_1} \left(\maltese_{r+s_2+1}^{r+s} + \mu\left( \mu_{r_2,s_2}^* (a_{r_1+1},\dots,a_r,m^*,a_{r+1},\dots,a_{r+s_2})\right) + \mu_M(m)\right) \\
&\qquad \qquad \qquad +\mu\left( \mu_{r_2,s_2}^* (a_{r_1+1},\dots,a_r,m^*,a_{r+1},\dots,a_{r+s_2})\right) +1 \\
&\equiv \maltese_1^{r_1} \left( \maltese_{r_1+1}^{r+s} + \mu(m^*) + \mu_M(m) + 1\right) + \maltese_{r_1+1}^{r+s_2} + \mu(m^*) \\
&\equiv \maltese_1^{r+s_2} + \maltese_1^{r_1} \left( \maltese_{r_1+1}^{r+s} + \mu(m^*) + \mu_M(m) \right) + \mu(m^*) \ .
\end{align*}
Furthermore, notice that
\begin{align*}
 &\ddag_{r_2,s_2}(a_{r_1+1},\dots,a_r,m^*,a_{r+1},\dots,a_{r+s_2},\mu^M_{s_1,r_1}(a_{r+s_2+1},\dots,a_{r+s},m,a_1,\dots,a_{r_1})) \\
&\equiv \maltese_{r_1+1}^r \left( \maltese_{r+1}^{r+s_2} + \mu(m^*) + \mu_M\left( \mu^M_{s_1,r_1} (a_{r+s_2+1},\dots,a_{r+s},m,a_1,\dots,a_{r_1}) \right) \right) + \mu(m^*)+1 \\
&\equiv \maltese_{r_1+1}^r \left( \maltese_{r+1}^{r+s} + \maltese_1^{r_1} + \mu(m^*) + \mu_M(m)+1 \right) + \mu(m^*)+1 \\
&\equiv \maltese_{r_1+1}^r + \maltese_1^{r_1}\maltese_{r_1+1}^r + \maltese_{r_1+1}^r \left( \maltese_{r+1}^{r+s} + \mu(m^*) + \mu_M(m)\right) + \mu(m^*)+1 \ . 
\end{align*}
Combining these last two computations, the definition of $S_2$ yields
\begin{align*}
 S_2 \equiv &\maltese_1^{r_1} + \maltese_1^{r+s_2} + \maltese_{r_1+1}^r + \maltese_1^{r_1} \left( \maltese_{r_1+1}^{r+s} + \mu(m^*) + \mu_M(m) \right) + \maltese_1^{r_1}\maltese_{r_1+1}^r \\
    &\qquad \qquad + \maltese_{r_1+1}^r \left( \maltese_{r+1}^{r+s} + \mu(m^*) + \mu_M(m) \right) + 1 \\
\equiv &\maltese_{r+1}^{r+s_2} + \maltese_1^r \left(\maltese_{r+1}^{r+s}+ \mu(m^*) + \mu_M(m) \right)+1 \equiv \maltese_{r+1}^{r+s_2} + k_0 \ , 
\end{align*}
implying the first congruence from the second line of (\ref{SignsDualBimod}). Finally, for $S_3$ we consider:
\begin{align*}
 &\ddag_{r,s_1}(a_1,\dots,a_r,m^*,a_{r+1},\dots,\mu_{s_2}(a_{r+j},\dots,a_{r+j+s_2-1}),\dots,a_{r+s},m) \\
&\equiv \maltese_1^r \left(\maltese_{r+1}^{r+j-1} + \|\mu_{s_2}(a_{r+j},\dots,a_{r+j+s_2-1})\| + \maltese_{r+j+s_2}^{r+s} + \mu(m^*) + \mu_M(m) \right)+ \mu(m^*)+1  \\
&\equiv \maltese_1^r \left( \maltese_{r+1}^{r+s} + 1 + \mu(m^*) + \mu_M(m)\right)+ \mu(m^*)+1 
\equiv \maltese_1^r + \mu(m^*) + k_0 \ .
\end{align*}
We consequently obtain that $S_3 \equiv \maltese_{r+1}^{r+j-1} + k_0$. Thus, we have shown all three equations of (\ref{SignsDualBimod}) with $k = k_0$ and therefore completed the proof.
\end{proof}

We conclude this section by defining the notion of morphisms of $A_\infty$-bimodules. 

\begin{definition}
\label{DefMorphismAinftyBimodules} \index{morphism of $A_\infty$-bimodules}
 Let $(M, (\mu^M_{r,s})_{r,s \in \NN_0})$ and $(N, (\mu^N_{r,s})_{r,s \in \NN_0})$ be $A_\infty$-bimodules over $A$. A family of module homomorphisms $f = (f_{r,s})_{r,s \in \NN_0}$, where:
\begin{equation*}
 f_{r,s}: A^{\otimes r} \otimes M \otimes A^{\otimes s} \to N \ ,
\end{equation*}
will be called \emph{a morphism of $A_\infty$-bimodules from $M$ to $N$ of degree $d$ }, denoted by $f:M\to N$, of degree $d \in \ZZ$ if e have
\begin{equation*}
 \deg f_{r,s} = d-r-s
\end{equation*}
for all $r,s \in \NN_0$ and if
\begin{align*}
 &\sum_{\stackrel{r_1,r_2 \in \NN_0}{r_1+r_2 = r}} \sum_{\stackrel{s_1,s_2 \in \NN_0}{s_1+s_2 = s}} (-1)^{d \cdot \maltese_1^{r_1}}  \\ &\quad \mu^N_{r_1,s_1}(a_1,\dots,a_{r_1},f_{r_2,s_2}(a_{r_1+1},\dots,a_r,m,a_{r+1},\dots,a_{r+s_1}),a_{r+s_1+1},\dots,a_{r+s}) \\
&= \sum_{\stackrel{r_1,r_2 \in \NN}{r_1+r_2 = r+1}} \sum_{i=1}^{r_1} (-1)^{\maltese_1^{i-1}+d} \\ &\quad f_{r_1,s}(a_1,\dots,a_{i-1},\mu_{r_2}(a_i,\dots,a_{i+r_2-1}),a_{i+r_2},\dots,a_r,m,a_{r+1},\dots,a_{r+s}) \\
&+ \sum_{\stackrel{r_1,r_2 \in \NN_0}{r_1+r_2 = r}} \sum_{\stackrel{s_1,s_2 \in \NN_0}{s_1+s_2 = s}} (-1)^{\maltese_1^{r_1}+d}  \\ &\quad  f_{r_1,s_1}(a_1,\dots,a_{r_1},\mu^M_{r_2,s_2}(a_{r_1+1},\dots,a_r,m,a_{r+1},\dots,a_{r+s_2}),a_{r+s_2+1},\dots,a_{r+s}) \\
&+ \sum_{\stackrel{s_1,s_2 \in \NN}{s_1+s_2=s+1}} \sum_{i=1}^{s_1} (-1)^{\maltese_1^{r+i-1}+\mu(m) + d}  \\ &\quad  f_{r,s_1}(a_1,\dots,a_r,m,a_{r+1},\dots,a_{r+i-1},\mu_{s_2}(a_{r+i},\dots,a_{r+i+s_2-1}),a_{r+i+s_2},\dots,a_{r+s}) \ 
\end{align*}
for all $a_1,\dots,a_{r+s} \in A$ and $m \in M$. We denote $f$ by $f: M \to N$. 
\end{definition}

\begin{remark}
The sign coefficients appearing in the defining equations of a morphism of $A_\infty$-bimodules might differ from those used other definitions in the literature, e.g. in \cite[Section 4]{TradlerInner}. However, our choice of signs implies that a morphism of $A_\infty$-bimodules induces a map between the corresponding Hochschild chain complexes which will be the content of Theorem \ref{inducedHochschildchainmap} in the next section.
\end{remark}

\begin{remark}
\label{RemarkAinftycategory}
 In the same way as a ring can be seen as an additive category with one single object, the notion of an $A_\infty$-algebra can be generalized to the notion of an $A_\infty$-category, such that an $A_\infty$-algebra is an $A_\infty$-category with one single object. 
 
 $A_\infty$-categories are discussed extensively and carefully by Paul Seidel in \cite[Part 1]{SeidelBook} and many of the results Seidel presents have obvious consequences for $A_\infty$-algebras. \index{$A_\infty$-categories}
\end{remark}

\section{\texorpdfstring{Hochschild homology of $A_\infty$-algebras}{Hochschild homology of A-infinity-algebras}}
\label{SectionHochschildHomology}

Hochschild homology has originally been defined for associative algebras and bimodules over them. The original definition has been extended to differential graded algebras. This notion of Hochschild homology has been used in \cite{Jones} to construct an isomorphism between loop space cohomology and Hochschild homology of singular cochains. For Hochschild homology of differential graded algebras see also \cite{AbbaspourSurvey} and \cite[Chapter 9]{Weibel}. 

The generalized definition of Hochschild homology for $A_\infty$-algebras which we are about to give reduces to the previously mentioned definition in the case of a differential graded algebra. 

Hochschild homology for $A_\infty$-algebras is also defined in \cite[Section 5]{SeidelHochschild}, \cite[Section 7.2]{KontsevichSoibelmanNotes} and \cite{PenkavaSchwarz}. Except for a minor change in the sign convention and the ordering of the tensor algebra, our definition corresponds to the one in \cite[Section 5]{SeidelHochschild}. \\

\emph{Throughout this section, let $(A = \bigoplus_{j \in \ZZ} A_j, (\mu_n)_{n \in \NN})$ be an $A_\infty$-algebra over $R$ and let $(M = \bigoplus_{j \in \ZZ} M_j, (\mu^M_{r,s})_{r,s \in \NN_0})$ be an $A_\infty$-bimodule over $A$. We further denote the index of $m \in M$ by $\mu(m):=\mu_M(m)$. } \\	

\index{Hochschild!chain complex} \nomenclature{$CH_*(A;M)$}{Hochschild chain complex of $A$ with coefficients in $M$} 
Consider the graded $R$-bimodule
\begin{equation*}
 CH_*(A;M) := \bigoplus_{n=0}^{\infty} M \otimes A^{\otimes n} \ ,
\end{equation*}
where a grading on $CH_*(A;M)$ is given by
\begin{equation*}
 CH_j(A;M) = \bigoplus_{n \in \NN_0} \bigoplus_{j= n-j_0-j_1-\dots-j_n} M_{j_0} \otimes A_{j_1} \otimes \dots \otimes A_{j_n} \ .
\end{equation*}

\begin{definition} \index{Hochschild!degree} \nomenclature{$\deg$}{Hochschild degree, degree of a graded map}
 For $a \in CH_*(A;M)$ we write $\deg(a) = j$ iff $a \in CH_j(A;M)$ and call it the \emph{degree} of $a$. Note that for all $m \in M$, $n \in \NN_0$ and $a_1,a_2,\dots,a_n\in A$ the degree of $m \otimes a_1 \otimes \dots \otimes a_n$ is explicitly given by
\begin{equation*}
 \deg(m \otimes a_1 \otimes \dots \otimes a_n) = n - \mu(m) - \sum_{j=1}^n \mu(a_j) = -\mu(m) - \sum_{j=1}^n \|a_j\| \ .
\end{equation*}
\end{definition}
\begin{remark}
 The degree on $CH_*(A;M)$ is best understood in terms of the shifted $A_\infty$-algebra $A[1]$. We may identify $CH_*(A;M)$ as a group with 
 $$ \bigoplus_{n=0}^\infty M \otimes A^{\otimes n} \ . $$ 
 The degree on $CH_*(A;M)$ then coincides with the usual product degree of this tensor algebra.
\end{remark}

In the following we construct a differential on $CH_*(A;M)$ which lowers the degree by one. The homology of the thus constructed chain complex will be the desired Hochschild homology. \\

Let $n \in \NN_0$ and $l \in \{1,2,\dots,n+1\}$. Define a module homomorphism by
\begin{align*}
 \mu^n_{0,l}: M \otimes A^{\otimes n} &\to M \otimes A^{\otimes (n-l+1)} \ , \\
  m \otimes a_1 \otimes \dots \otimes a_n &\mapsto \mu^M_{0,l-1}(m,a_1,\dots,a_{l-1}) \otimes a_l \otimes \dots \otimes a_n \ .
\end{align*}
For $1 \leq i \leq j \leq n$, we let $\maltese_i^j$ be given as in \eqref{EqDefMalteseij} while for each $i \in \NN_0$ we put 
\begin{equation*}
\maltese_0^i := \maltese_0^i(m \otimes a_1 \otimes \dots \otimes a_n) := \mu(m) + \sum_{q=1}^i \|a_q\|= \mu(m) + \maltese_1^i \ .
\end{equation*}
We just write $\maltese_i^j$ without further mentioning the element of the tensor product if it is clear which element we are referring to. \\

Let $n$ and $l$ be as above. For $i \in \{1,2,\dots,n-l+1\}$ we define
\begin{align*}
 \mu^n_{i,l}: M \otimes A^{\otimes n} &\to M \otimes A^{\otimes (n-l+1)} \ , \\
 m \otimes a_1 \otimes \dots \otimes a_n &\mapsto (-1)^{\maltese_{i-1}} m \otimes a_1 \otimes \dots \otimes a_{i-1} \otimes \mu_l(a_i,\dots,a_{i+l-1})\otimes a_{i+l} \otimes \dots \otimes a_n \ .
\end{align*}
For $i \in \{n-l+1,n-l+2,\dots,n\}$ we define
\begin{align*}
 &\mu^n_{i,l}: M \otimes A^{\otimes n} \to M \otimes A^{\otimes (n-l+1)} \ , \\
 &m \otimes a_1 \otimes \dots \otimes a_n \mapsto \\ &\qquad (-1)^{*_{i-1}} \mu^M_{n-i+1,i+l-n-2}(a_i,\dots,a_n,m,a_1,\dots,a_{i+l-n-2})\otimes a_{l-n+i-1} \otimes \dots \otimes a_{i-1} \ ,
 \end{align*}
 where we put: \nomenclature{$*_i$}{exponents of the signs defining the overlapping parts of the Hoch\-schild differential}
 \begin{equation*}
  *_{i-1} := *_{i-1}(m\otimes a_1 \otimes \dots \otimes a_n) := \maltese_0^{i-1} \cdot \maltese_i^n = \left(\mu(m) + \sum_{q=1}^{i-1} \|a_q\| \right) \left(\sum_{q=i}^n \|a_q\| \right) \ .
 \end{equation*}
We extend every map $\mu^n_{i,l}$ by zero to all of $CH_*(A;M)$. We further define $\mu^n_{i,l} := 0$ if $l > n+1$ or $i > n$ and consider the map
\begin{equation*}
 b_{i,l} := \sum_{n \in \NN_0} \mu^n_{i,l}
\end{equation*} \nomenclature{$b_{i,l}$}{component of the Hochschild differential}
for every $i \in \NN_0$, $l \in \NN$. 

\begin{definition} \index{Hochschild!differential} \nomenclature{$b$}{Hochschild differential}
 The map 
 \begin{equation*} 
  b: CH_*(A;M) \to CH_*(A;M) \ , \qquad b := \sum_{l \in \NN} \sum_{i \in \NN_0} b_{i,l} \ ,
 \end{equation*}
 is called \emph{the Hochschild differential of $A$ with coefficients in $M$.}
\end{definition}

\begin{remark}
\label{RemarkHochschildDiff}
 \begin{enumerate}
  \item Note that on elements of $M \otimes A^{\otimes n}$ for a fixed $n$ we have
  \begin{equation*}
   b(m\otimes a_1\otimes \dots \otimes a_n) = \sum_{l=1}^{n+1} \sum_{i=0}^n b_{i,l}(m \otimes a_1 \otimes \dots \otimes a_n) \ .
  \end{equation*}
  For reasons which become apparent when looking at the definition of the $b_{i,l}$, we further call 
 \begin{equation*}
  \sum_{l=1}^{n+1} \sum_{i=n-l+2}^n b_{i,l}(m \otimes a_1 \otimes \dots \otimes a_n)
 \end{equation*}
 \emph{the overlapping part} of $b$ on $m \otimes a_1 \otimes\dots \otimes  a_n$.
  \item If $M=A[1]$ is equipped with the $A_\infty$-bimodule structure from Remark \ref{RemarkBimoduleonA1}, then the Hochschild differential will take a slightly simpler form. Indeed, we compute for $a_0,a_1,\dots,a_n \in A$ that
  \begin{align*}
   &b(a_0\otimes a_1 \otimes \dots \otimes a_n) \\ 
   &= \sum_{l=1}^{n+1} \Big( \sum_{i=0}^{n-l+1} (-1)^{\maltese_0^{i-1}} a_0 \otimes a_1 \otimes \dots \otimes \mu_l(a_i,\dots,a_{i+l-1}) \otimes a_{i+l} \otimes \dots \otimes a_n \\
     &+\sum_{i=n-l+2}^n (-1)^{*_{i-1}} \mu_l(a_i,\dots,a_n,a_0,a_1,\dots,a_{i+l-n-2})\otimes a_{i+l-n-1} \otimes \dots \otimes a_{i-1} \Big) \ ,
  \end{align*}
  where $\maltese_0^{i-1} = \sum_{j=0}^{i-1} \|a_j\|$ and $*_{i-1} = \Bigl(\sum_{j=0}^{i-1} \|a_j\|\Bigr)\cdot \Bigl(\sum_{j=i}^n\|a_j\|\Bigr)$.
 \end{enumerate}
\end{remark}

The proof of the next theorem makes heavy use of the defining equations of the $A_\infty$-algebra $A$ and the $A_\infty$-bimodule $M$.

\begin{theorem}
\label{HochschildDifferential} \index{Hochschild!differential}
 $b \circ b =0$, i.e. $b$ is a differential on $CH_*(A;M)$ which lowers degree by one.
\end{theorem}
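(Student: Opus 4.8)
The plan is to expand $b\circ b$ on a fixed generator $m\otimes a_1\otimes\dots\otimes a_n$ into a sum of twofold compositions $\mu^{n'}_{i',l'}\circ\mu^{n}_{i,l}$ and to show the total vanishes by organising the terms into groups, each of which is either a pair cancelling by reordering or a tensor multiple of an instance of a defining equation — of $A$ via \eqref{Ainftyequation}, or of $M$ via Definition \ref{DefAinftybimodule} — and hence zero. Each summand is prescribed by two ``operations'': one contracts a block of consecutive algebra inputs by some $\mu_l$ (the terms with small index $i$, carrying the sign $(-1)^{\maltese_0^{i-1}}$), while the other is either of the same internal type or else contracts a block containing $m$ that may wrap cyclically around the end of the word by some $\mu^M_{r,s}$ (the overlapping terms with large $i$, carrying the sign $(-1)^{*_{i-1}}$). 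First I would classify each term by the relative position of the two blocks: \emph{disjoint}, when neither block meets the output of the other, versus \emph{nested}, when the outer block contains the element produced by the inner operation.

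For the disjoint terms I would pair each summand with the one obtained by exchanging the order of the two operations. Because the blocks are disjoint the two orderings yield the identical resulting tensor, so cancellation comes down to a sign: applying an internal $\mu_l$ raises the total reduced degree of its block by one, since $\|\mu_l(a_i,\dots,a_{i+l-1})\|=\maltese_i^{i+l-1}+1$, and a bimodule operation shifts degrees similarly, so the $\maltese$- or $*$-prefactor of whichever operation stands to the right changes parity according to whether the other has already been performed. Thus the two orderings carry opposite signs and the disjoint contributions cancel pairwise, contributing nothing to $b\circ b$.

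For the nested terms I would group by the outer operation and the block it consumes, and identify each subsum with a defining equation. When both operations contract pure algebra blocks, the inner $\mu_{n_1}$ feeds one slot of the outer $\mu_{n_2}$, and summing over all admissible inner positions reproduces on that block the left-hand side of the $A_\infty$-equation \eqref{Ainftyequation} of $A$, tensored with the identity on the remaining factors, and hence vanishes. When $m$ takes part in one or both operations — including the wrap-around case — the subsums reassemble, after cyclically rotating $m$ to the front of the active block, into the three sums of the defining equation of type $(r,s)$ for $M$ (an algebra operation then a module operation on the left, a module operation nested in a module operation, and an algebra operation then a module operation on the right), each vanishing by the bimodule axiom.

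The main obstacle is the sign bookkeeping for the overlapping part, entirely parallel to the verification of $S_1$, $S_2$, $S_3$ in the proof of Theorem \ref{DualBimodule}. The point is that an overlapping operation $\mu^n_{i,l}$ becomes an honest bimodule operation only after moving the suffix $a_i\otimes\dots\otimes a_n$ past $m\otimes a_1\otimes\dots\otimes a_{i-1}$, which is precisely the Koszul sign $(-1)^{*_{i-1}}$ with $*_{i-1}=\maltese_0^{i-1}\,\maltese_i^n$. I would check, modulo $2$, that for each nested group the total exponent — the two prefactor signs of the operations together with the rotation sign needed to display the term in the standard form of the bimodule equation — matches the prescribed sign $\maltese_1^{i-1}$, $\maltese_1^{r_1}$, or $\maltese_1^{r+j-1}+\mu(m)$ of Definition \ref{DefAinftybimodule}, up to one global sign constant across the group. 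Once these congruences hold the nested groups vanish one defining equation at a time, which together with the pairwise cancellation of the disjoint terms gives $b\circ b=0$; that $b$ lowers degree by one is immediate, since each $\mu^n_{i,l}$ does.
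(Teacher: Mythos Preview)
Your plan is correct and is essentially the paper's strategy: expand $b\circ b$ on a generator, then kill the double sum by pairing off disjoint compositions (which cancel by a Koszul sign flip) and by recognising each nested group as a tensor multiple of a defining equation of $A$ or of $M$. The paper's top-level bookkeeping is orthogonal to yours --- it first splits into four blocks $A_1,A_2,A_3,A_4$ according to whether the first and/or second operation is of overlapping type, and only then locates your disjoint/nested dichotomy inside each block (e.g.\ $A_{11}+A_{13}=0$ and $A_{22}+A_{32}=0$ are your disjoint cancellations, while $A_{12}=0$ and $A_{21}+A_{23}+A_4+A_{31}=0$ are your nested groups) --- but the underlying cancellations are identical. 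The sign verifications you correctly flag as the main obstacle are indeed the bulk of the work: in the paper they are isolated as a separate lemma of four modular identities governing how $\maltese$ and $*$ transform under a prior application of some $b_{i,l}$, and the disjoint cancellation in the mixed case (one overlapping, one not) already requires one of these identities rather than the one-line parity argument you sketch, since $*_{i-1}=\maltese_0^{i-1}\cdot\maltese_i^n$ has both factors affected by an operation applied elsewhere.
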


\begin{definition}\index{Hochschild!chain complex} \index{Hochschild!homology} \nomenclature{$HH_*(A;M)$}{Hochschild homology of $A$ with coefficients in $M$}
 The homology group of the chain complex $(CH_*(A;M),b)$ is called \emph{the Hochschild homology of $A$ with coefficients in $M$} and denoted by $HH_*(A;M)$.
\end{definition}

\begin{proof}[Proof of Theorem \ref{HochschildDifferential}]
 The statement about the degrees is not difficult to check using that every map $\mu_l$ has degree $2-l$ and every map $\mu^M_{r,s}$ has degree $1-r-s$. In terms of (reduced) indices, this means:
 \begin{align}
  &\left\|\mu_l(a_1,\dots,a_l)\right\| = \sum_{q=1}^l \|a_q\|+1 \ \ \forall a_1,\dots,a_l \in A  \ , \label{degreeofmu} \\
  &\mu\left(\mu^M_{r,s}(a_1,\dots,a_r,m,a_{r+1},\dots,a_{r+s}) \right) = \mu(m) + \sum_{q=1}^{r+s} \|a_q\|+1 \ \ \forall a_1,\dots,a_{r+s} \in A, m \in M \notag \ .
 \end{align}
This implies for all $m \in M$, $a_1,\dots,a_n \in A$, $l \in \{1,2,\dots,n+1\}$ and $i \in \{0,1,\dots,n\}$ that
\begin{equation*}
 \deg(b_{i,l}(m\otimes a_1 \otimes \dots \otimes a_n)) = - \mu(m) - \sum_{j=1}^n \|a_j\|-1 = \deg(m \otimes a_1 \otimes \dots \otimes a_n) -1
\end{equation*}
Hence $b$ lowers degree by one. \bigskip

We will show $b \circ b=0$ by a long, but straightforward computation using the defining equations for $A$ as an $A_\infty$-algebra and $M$ as an $A_\infty$-bimodule over $A$.  \bigskip

\emph{In the following we assume w.l.o.g. that $M=A[1]$, equipped with the $A_\infty$-bimodule structure from Remark \ref{RemarkBimoduleonA1}. This helps us to simplify our notation in the sense of Remark \ref{RemarkHochschildDiff}. The attentive reader will have no difficulties extending the proof to the general case.} \bigskip

It suffices to check $b \circ b=0$ on elements of $CH_*(A;M)$ of type $m \otimes a_1 \otimes \dots \otimes a_n$ for $m,a_1,\dots,a_n \in A$, since all the results extend $R$-linearly. We start by writing down $b \circ b$ on such an element in greater detail. The linearity of $b$ implies
\begin{align*}
 (b\circ b)(m \otimes a_1 \otimes \dots \otimes a_n) 
  &= b \left( \sum_{l_1=1}^{n+1} \sum_{i_1=0}^n b_{i_1,l_1}(m\otimes a_1 \otimes \dots \otimes a_n) \right) \\
 &= \sum_{l_1=1}^{n+1} \sum_{i_1=0}^n b(b_{i_1,l_1}(\mas)) \\
 &= \sum_{l_1=1}^{n+1} \sum_{i_1=0}^n \sum_{l_2=1}^{n-l_1+2} \sum_{i_2=0}^{n-l_1+1} \left(b_{i_2,l_2} \circ b_{i_1,l_1}\right)(\mas) \\
 &=: A_1 + A_2 + A_3 + A_4 \ , 
 \end{align*}
 where we put
 \begin{align*}
  A_1 &:= \sum_{l_1=1}^{n+1} \sum_{i_1=0}^{n-l_1+1} \sum_{l_2=1}^{n-l_1+2} \sum_{i_2=0}^{n-l_1-l_2+2} \left(b_{i_2,l_2} \circ b_{i_1,l_1}\right)(\mas) \ , \\
  A_2 &:= \sum_{l_1=1}^{n+1} \sum_{i_1=0}^{n-l_1+1} \sum_{l_2=1}^{n-l_1+2} \sum_{i_2=n-l_1-l_2+3}^{n-l_1+1} \left(b_{i_2,l_2} \circ b_{i_1,l_1}\right)(\mas) \ , \\
  A_3 &:= \sum_{l_1=1}^{n+1} \sum_{i_1=n-l_1+2}^n \sum_{l_2=1}^{n-l_1+2} \sum_{i_2=0}^{n-l_1-l_2+2} \left(b_{i_2,l_2} \circ b_{i_1,l_1}\right)(\mas) \ , \\
  A_4 &:= \sum_{l_1=1}^{n+1} \sum_{i_1=n-l_1+2}^n \sum_{l_2=1}^{n-l_1+2} \sum_{i_2=n-l_1-l_2+3}^{n-l_1+1} \left(b_{i_2,l_2} \circ b_{i_1,l_1}\right)(\mas) \ . \\
 \end{align*}
More intuitively, $A_1$ consists of all summands for which both $b_{i_1,l_1}$ and $b_{i_2,l_2}$ do not belong to the overlapping part of the differential, while $A_2$ and $A_3$ collect all summands for which either one of the two maps belongs to it and $A_4$ contains all summands for which both maps are overlapping.
 
 We have to treat all four sums seperately to exploit the combinatorics of the Hochschild differential. First we show that
 \begin{equation*}
  A_1 \stackrel{!}{=} 0 \ .
 \end{equation*}
Consider fixed numbers $l_1 \in \{1,2,\dots,n+1\}$ and $i_1 \in \{0,1,\dots,n-l_1+1\}$. We can write the corresponding inner sum of $A_1$ as follows:
\begin{align*}
 &\sum_{l_2=1}^{n-l_1+2} \sum_{i_2=0}^{n-l_1-l_2+2} \left(b_{i_2,l_2} \circ b_{i_1,l_1}\right)(\mas) \\
 &= \sum_{l_2=1}^{n-l_1+2} \sum_{i_2=0}^{n-l_1-l_2+2} (-1)^{\maltese_0^{i_1-1}} \\ &\qquad b_{i_2,l_2}(m \otimes a_1 \otimes \dots \otimes a_{i_1-1} \otimes \mu_{l_1}(a_{i_1},\dots,a_{i_1+l_1-1})\otimes a_{i_1+l_1}\otimes \dots \otimes a_n)  \\
 &= \sum_{l_2=1}^{n-l_1+2} \left( \sum_{i_2=0}^{i_1-l_2} (-1)^{\maltese_0^{i_1-1} + \maltese_0^{i_2-1}} \right. \\ 
     &\qquad m \otimes a_1 \otimes \dots \otimes \mu_{l_2}(a_{i_2},\dots,a_{i_2+l_2-1})\otimes \dots \otimes \mu_{l_1}(a_{i_1},\dots,a_{i_1+l_1-1})\otimes \dots \otimes a_n \\
  &+ \sum_{i_2=i_1-l_2+1}^{i_1} (-1)^{\maltese_0^{i_1-1} + \maltese_0^{i_2-1}} \\ 
     &\qquad m \otimes a_1 \otimes \dots \otimes \mu_{l_2}(a_{i_2},\dots,\mu_{l_1}(a_{i_1},\dots,a_{i+l_1-1}),\dots,a_{i_2+l_1+l_2-2}) \otimes \dots \otimes a_n \\
  &+\sum_{i_2=i_1+l_1}^{n-l_2+1} (-1)^{\maltese_0^{i_1-1} + \maltese_0^{i_1-1}+\|\mu_{l_1}(a_{i_1},\dots,a_{i_1+l_1-1})\|+\maltese_{i_1+l_1}^{i_2-1}} \\ 
     & \left. \ \ \phantom{\sum_{i=0}^n}  m \otimes a_1 \otimes \dots \otimes \mu_{l_1}(a_{i_1},\dots,a_{i_1+l_1-1})\otimes \dots \otimes \mu_{l_2}(a_{i_2},\dots,a_{i_2+l_2-1})\otimes \dots \otimes a_n ) \ \right) \ .
\end{align*}
Let '$\equiv$' denote congruence modulo two. Then (\ref{degreeofmu}) implies:
\begin{align*}
 \maltese_0^{i_1-1} + \maltese_0^{i_2-1} &\equiv \maltese_{i_2}^{i_1-1} \ , &\text{ if } i_2 \leq i_1 \ , \\
 \maltese_0^{i_1-1} + \maltese_0^{i_1-1} + \|\mu_{l_1}(a_{i_1},\dots,a_{i_1+l_1-1})\|+\maltese_{i_1+l_1}^{i_2-1} \ , &\equiv \maltese_{i_1}^{i_2-1}+1  &\text{ if } i_2 > i_1 \ .
\end{align*}
By the last computation we obtain a splitting
\begin{equation*}
 A_1 = A_{11}+A_{12}+A_{13} \ ,
\end{equation*}
where we put:
\begin{align*}
 A_{11} :=  &\sum_{l_1=1}^{n+1} \sum_{i_1=0}^{n-l_1+1} \sum_{l_2=1}^{n-l_1+2}  \sum_{i_2=0}^{i_1-l_2} (-1)^{\maltese_{i_2}^{i_1-1}} \\ 
    &m \otimes a_1 \otimes \dots \otimes \mu_{l_2}(a_{i_2},\dots,a_{i_2+l_2-1})\otimes \dots \otimes \mu_{l_1}(a_{i_1},\dots,a_{i_1+l_1-1})\otimes \dots \otimes a_n \ , \\ 
    A_{12} := &\sum_{l_1=1}^{n+1} \sum_{i_1=0}^{n-l_1+1} \sum_{l_2=1}^{n-l_1+2}\sum_{i_2=i_1-l_2+1}^{i_1} (-1)^{\maltese_{i_2}^{i_1-1}} \\ 
    &m \otimes a_1 \otimes \dots \otimes \mu_{l_2}(a_{i_2},\dots,\mu_{l_1}(a_{i_1},\dots,a_{i+l_1-1}),\dots,a_{i_2+l_1+l_2-2})\otimes \dots \otimes a_n \ , \\
    A_{13} := &- \sum_{l_1=1}^{n+1} \sum_{i_1=0}^{n-l_1+1} \sum_{l_2=1}^{n-l_1+2} \sum_{i_2=i_1+l_1}^{n-l_2+1} (-1)^{\maltese_{i_1}^{i_2-1}} \\ 
     &\quad m \otimes a_1 \otimes \dots \otimes \mu_{l_1}(a_{i_1},\dots,a_{i_1+l_1-1})\otimes \dots \otimes \mu_{l_2}(a_{i_2},\dots,a_{i_2+l_2-1})\otimes \dots \otimes a_n \ .
\end{align*}
After interchanging the names of $i_1$ and $i_2$, $l_1$ and $l_2$, resp., we can permute the sums in $A_{11}$ as follows (where for brevity's sake the dots '$\cdots$' always indicate that we are summing over the same term as in the above line):
\begin{align*}
 A_{11} &= \sum_{l_2=1}^{n+1} \sum_{i_2=0}^{n-l_2+1} \sum_{l_1=1}^{n-l_2+2}  \sum_{i_1=0}^{i_2-l_1} (-1)^{\maltese_{i_1}^{i_2-1}} \\ 
    &\quad m \otimes a_1 \otimes \dots \otimes \mu_{l_1}(a_{i_1},\dots,a_{i_1+l_1-1})\otimes \dots \otimes \mu_{l_2}(a_{i_1},\dots,a_{i_2+l_2-1})\otimes \dots \otimes a_n \\ 
   &= \sum_{l_1=1}^{n+1} \sum_{l_2=1}^{n-l_1+2} \sum_{i_1=0}^{n-l_1+1} \sum_{i_2=i_1+l_1}^{n-l_2+1} \cdots \\
   &= \sum_{l_1=1}^{n+1} \sum_{i_1=0}^{n-l_1+1} \sum_{l_2=1}^{n-l_1+2} \sum_{i_2=i_1+l_1}^{n-l_2+1}  (-1)^{\maltese_{i_1}^{i_2-1}} 	\\
&\quad m \otimes a_1 \otimes \dots \otimes \mu_{l_1}(a_{i_1},\dots,a_{i_1+l_1-1})\otimes \dots \otimes \mu_{l_2}(a_{i_1},\dots,a_{i_2+l_2-1})\otimes \dots \otimes a_n =-A_{13} \ .
\end{align*}
Thus, we have shown that $A_{11}+A_{13}=0$. Next we consider $A_{12}$. We reorder the sums in $A_{12}$ as follows:
\begin{align*}
 A_{12} &= \sum_{l_1=1}^{n+1} \sum_{i_1=0}^{n-l_1+1} \sum_{i_2=i_1-l_2+1}^{i_1} (-1)^{\maltese_{i_2}^{i_1-1}} \\ 
    &\quad m \otimes a_1 \otimes \dots \otimes \mu_{l_2}(a_{i_2},\dots,\mu_{l_1}(a_{i_1},\dots,a_{i+l_1-1}),\dots,a_{i_2+l_1+l_2-2}) \otimes \dots \otimes a_n \\
     &= \sum_{l_1=1}^{n+1} \sum_{i_1=0}^n \sum_{i_2=0}^{i_1} \sum_{l_2=i_1-i_2+1}^{n-l_1+2} \cdots 
     = \sum_{i_2=0}^n \sum_{d=1}^{n+1} \sum_{l_1+l_2=d+1} \sum_{i_1=i_2}^{i_2+l_2+1}  \cdots \\     
     &= \sum_{i_2=0}^n \sum_{d=1}^{n+1} m \otimes a_1 \otimes \dots \otimes a_{i_2-1} \otimes \\
     &\quad \left( \sum_{l_1+l_2=d+1} \sum_{i_1=i_2}^{i_2+l_2+1} (-1)^{\maltese_{i_2}^{i_1-1}}  \mu_{l_2}(a_{i_2},\dots,a_{i_1-1},\mu_{l_1}(a_{i_1},\dots,a_{i+l_1-1}),a_{i_1+l_1},\dots,a_{i_2+d-1}) \right) \\ 
     &\quad \quad \quad \quad \quad \quad  \otimes a_{i_2+d}\otimes \dots \otimes a_n = 0 \ ,
\end{align*}
since every inner sum coincides with the left-hand side of the $d$-th defining equation of the $A_\infty$-algebra $A$ for $a_{i_2},\dots,a_{i_2+d-1}\in A$. So we have derived
\begin{equation*}
 A_1 = A_{11}+A_{12}+A_{13}=0 \ .
\end{equation*}
It remains to show that
\begin{equation*}
 A_2+A_3+A_4 = 0 \ .
\end{equation*}
We start by writing down $A_2$ explicitly. Namely, we write it as
\begin{align*}
 A_2 &=  \sum_{l=1}^{n+1} \sum_{i_1=0}^{n-l_1+1} \sum_{l_2=1}^{n-l_1+2} \sum_{i_2=n-l_1-l_2+3}^{n-l_1+1} \left(b_{i_2,l_2} \circ b_{i_1,l_1}\right)(\mas) \\
   &= \sum_{l=1}^{n+1} \sum_{i_1=0}^{n-l_1+1} \sum_{i_2=0}^{n-l_1+1} \sum_{l_2=n-i_2-l_1+3}^{n-l_1+2} (-1)^{\maltese_0^{i_1-1}} \\ &\qquad b_{i_2,l_2}(m \otimes a_1 \otimes \dots \otimes \mu_{l_1}(a_{i_1},\dots,a_{i_1+l_1-1}) \otimes \dots \otimes a_n) \\
   &= A_{21}+A_{22}+A_{23} \ ,
\end{align*}
where we put
\begin{align*}
 A_{21} &:= \sum_{l_1=1}^{n+1} \sum_{i_1=0}^{n-l_1+1} \sum_{i_2=0}^{i_1} \sum_{l_2=n-i_2-l_1+3}^{n-l_1+2} (-1)^{\maltese_0^{i_1-1}} \\ 
 &\qquad \qquad \qquad b_{i_2,l_2}(m \otimes a_1 \otimes \dots \otimes \mu_{l_1}(a_{i_1},\dots,a_{i_1+l_1-1}) \otimes \dots \otimes a_n) \ , \\
 A_{22} &:= \sum_{l_1=1}^{n+1} \sum_{i_1=0}^{n-l_1+1} \sum_{i_2=i_1+1}^{n-l_1+1} \sum_{l_2=n-i_2-l_1+3}^{n+i_1-i_2-l_1+2} (-1)^{\maltese_0^{i_1-1}}\\ 
 &\qquad \qquad \qquad b_{i_2,l_2}(m \otimes a_1 \otimes \dots \otimes \mu_{l_1}(a_{i_1},\dots,a_{i_1+l_1-1}) \otimes \dots \otimes a_n) \ , \\
 A_{23} &:= \sum_{l_1=1}^{n+1} \sum_{i_1=0}^{n-l_1+1} \sum_{i_2=i_1+1}^{n-l_1+1} \sum_{l_2=n+i_1-i_2-l_1+3}^{n-l_1+2} (-1)^{\maltese_0^{i_1-1}} \\ 
 &\qquad \qquad \qquad b_{i_2,l_2}(m \otimes a_1 \otimes \dots \otimes \mu_{l_1}(a_{i_1},\dots,a_{i_1+l_1-1}) \otimes \dots \otimes a_n) \ .
\end{align*}
The meaning of this decomposition will clarify if we write down the maps $b_{i_2,l_2}$ in greater detail in the three different cases. $A_{21}$ corresponds to the case that the $\mu_{l_1}$-term is among the inputs of the map $\mu_{l_2}$ such that it stands \emph{in front of} $m$, while $A_{23}$ corresponds to the case that the $\mu_{l_1}$-term is also among the inputs of the map $\mu_{l_2}$, but that it stands \emph{behind} $m$  with respect to the ordering of the inputs. Finally, $A_{22}$ corresponds to the case that the $\mu_{l_1}$-term is \emph{not} among the inputs of $\mu_{l_2}$. \\

We continue by explicitly describing $A_{21}$ as
\begin{align*}
 &A_{21} = \sum_{l_1=1}^{n+1} \sum_{i_1=0}^{n-l_1+1} \sum_{i_2=0}^{i_1} \sum_{l_2=n-i_2-l_1+3}^{n-l_1+2} (-1)^{\maltese_0^{i_1-1}(\mas)+*_{i_2-1}(b_{i_1,l_1}(\mas))} \\
        &\qquad \mu_{l_2}(a_{i_2},\dots,\mu_{l_1}(a_{i_1},\dots,a_{i_1+l_1-1}),\dots,a_n,m,a_1,\dots , a_{i_2+l_1+l_2-n-3}) \otimes \dots \otimes a_{i_2-1} \ .
\end{align*}
The sign simplifies according to part 1 of Lemma \ref{SignsHochschildProof} as 
\begin{align*}
 A_{21} &= \sum_{l_1=1}^{n+1} \sum_{i_1=0}^{n-l_1+1} \sum_{i_2=0}^{i_1} \sum_{l_2=n-i_2-l_1+3}^{n-l_1+2} (-1)^{*_{i_2-1}+\maltese_{i_2}^{i_1-1}} \\
        &\mu_{l_2}(a_{i_2},\dots,\mu_{l_1}(a_{i_1},\dots,a_{i_1+l_1-1}),\dots,a_n,m,a_1,\dots , a_{i_2+l_1+l_2-n-3}) \otimes \dots \otimes a_{i_2-1} \\
        &= \sum_{l_1=1}^{n+1} \sum_{i_2=0}^{n-l_1+1} \sum_{i_1=i_2}^{n-l_1+1} \sum_{l_2=n-i_2-l_1+3}^{n-l_1+2} \cdots \\
        &= \sum_{i_2=0}^{n} \sum_{l_1=1}^{n-i_2+1} \sum_{i_1=i_2}^{n-l_1+1} \sum_{l_2=n-i_2-l_1+3}^{n-l_1+2} \cdots \\
   &= \sum_{i_2=0}^{n} \sum_{l_1=1}^{n-i_2+1} \sum_{l_2=n-i_2-l_1+3}^{n-l_1+2} \sum_{i_1=i_2}^{n-l_1+1} (-1)^{*_{i_2-1}+\maltese_{i_2}^{i_1-1}} \\
        &\mu_{l_2}(a_{i_2},\dots,\mu_{l_1}(a_{i_1},\dots,a_{i_1+l_1-1}),\dots,a_n,m,a_1,\dots , a_{i_2+l_1+l_2-n-3}) \otimes \dots \otimes a_{i_2-1} \ .
\end{align*}
Shifting the index $l_2$ by $l_1-1$ and renaming it by $d$, we derive: 
\begin{align}
 A_{21} &= \sum_{i_2=0}^{n} \sum_{l_1=1}^{n-i_2+1} \sum_{d=n-i_2+2}^{n+1} \sum_{i_1=i_2}^{n-l_1+1} (-1)^{*_{i_2-1}+\maltese_{i_2}^{i_1-1}} \notag \\
           &\mu_{d+1-l_1}(a_{i_2},\dots,\mu_{l_1}(a_{i_1},\dots,a_{i_1+l_1-1}),\dots,a_n,m,a_1,\dots , a_{i_2+d-n-2}) \otimes\dots \otimes a_{i_2-1} \notag \\
       &= \sum_{i_2=0}^{n} \sum_{d=n-i_2+2}^{n+1} \sum_{l_1=1}^{n-i_2+1} \sum_{i_1=i_2}^{n-l_1+1} (-1)^{*_{i_2-1}+\maltese_{i_2}^{i_1-1}} \notag \\
           &\mu_{d+1-l_1}(a_{i_2},\dots,\mu_{l_1}(a_{i_1},\dots,a_{i_1+l_1-1}),\dots,a_n,m,a_1,\dots , a_{i_2+d-n-2}) \otimes\dots \otimes a_{i_2-1} \label{a21} .
\end{align}
We conclude our treatment of $A_{21}$ for the moment and equation (\ref{a21}) will become important later. We continue by considering $A_{23}$ in a similar way and reformulate it as 
\begin{align*}
A_{23} &= \sum_{l_1=1}^{n+1} \sum_{i_1=0}^{n-l_1+1} \sum_{i_2=i_1+1}^{n-l_1+1} \sum_{l_2=n+i_1-i_2-l_1+3}^{n-l_1+2} (-1)^{\maltese_0^{i_1-1}+*_{i_2-1}(b_{i_1,l_1}(\mas))} \\ 
        &\mu_{l_2}(a_{i_2+l_1-1},\dots,a_n,m,a_1,\dots,\mu_{l_1}(a_{i_1},\dots,a_{i_1+l_1-1}),\dots, a_{i_2+l_2-n-2}) \otimes \dots \otimes a_{i_2+l_1-2} \ .
\end{align*}
We can again simplify the sign, see part 2 of Lemma \ref{SignsHochschildProof}. If we further shift the index $i_2$ by $l_1-1$, we will derive:
\begin{align*}
A_{23} &= \sum_{l_1=1}^{n+1} \sum_{i_1=0}^{n-l_1+1} \sum_{i_2=i_1+l_1}^{n} \sum_{l_2=n+i_1-i_2+2}^{n-l_1+2} (-1)^{*_{i_2-1}+\maltese_{i_2}^n + \maltese_0^{i_1-1}} \\ 
        &\mu_{l_2}(a_{i_2},\dots,a_n,m,a_1,\dots,\mu_{l_1}(a_{i_1},\dots,a_{i_1+l_1-1}),\dots, a_{i_2+l_1+l_2-n-3}) \otimes \dots \otimes a_{i_2-1} \ .
\end{align*}
Another shift of $l_2$ by $l_1-1$ and renaming it by $d$ enables us to write
\begin{align}
A_{23} &:= \sum_{l_1=1}^{n+1} \sum_{i_1=0}^{n-l_1+1} \sum_{i_2=i_1+l_1}^{n} \sum_{d=n+i_1-i_2+l_1+1}^{n+1} (-1)^{*_{i_2-1}+\maltese_{i_2}^n + \maltese_0^{i_1-1}} \notag \\ 
        &\mu_{d+1-l_1}(a_{i_2},\dots,a_n,m,a_1,\dots,\mu_{l_1}(a_{i_1},\dots,a_{i_1+l_1-1}),\dots, a_{i_2+d-n-2}) \otimes \dots \otimes a_{i_2-1} \notag \\
        &= \sum_{l_1=1}^{n+1} \sum_{i_2=l_1}^{n} \sum_{i_1=0}^{i_2-l_1} \sum_{d=n+i_1-i_2+l_1+1}^{n+1} \cdots \notag \\
        &= \sum_{i_2=0}^{n} \sum_{l_1=1}^{i_2} \sum_{d=n-i_2+l_1+1}^{n+1} \sum_{i_1=0}^{i_2+d-n-l_1-1} \cdots \notag \\
        &= \sum_{i_2=0}^{n} \sum_{d=n-i_2+2}^{n+1} \sum_{l_1=1}^{i_2+d-n-1} \sum_{i_1=0}^{i_2+d-n-l_1-1} (-1)^{*_{i_2-1}+\maltese_{i_2}^n + \maltese_0^{i_1-1}} \label{a23} \\ 
        &\mu_{d+1-l_1}(a_{i_2},\dots,a_n,m,a_1,\dots,\mu_{l_1}(a_{i_1},\dots,a_{i_1+l_1-1}),\dots, a_{i_2+d-n-2})\otimes \dots \otimes a_{i_2-1} \ . \notag
\end{align}
We have brought $A_{21}$ and $A_{23}$ into a similar form and continue by applying similar techniques to $A_4$ to find a general scheme for the sum $A_{21}+A_{23}+A_4$.
\begin{align*}
 A_4 &= \sum_{l_1=1}^{n+1} \sum_{i_1=n-l_1+2}^n \sum_{l_2=1}^{n-l_1+2} \sum_{i_2=n-l_1-l_2+3}^{n-l_1+1} \left(b_{i_2,l_2} \circ b_{i_1,l_1}\right)(\mas) \\
  &=  \sum_{l_1=1}^{n+1} \sum_{i_1=n-l_1+2}^n \sum_{l_2=1}^{n-l_1+2} \sum_{i_2=n-l_1-l_2+3}^{n-l_1+1} (-1)^{*_{i_1-1}} \\
  &b_{i_2,l_2}\left(\mu_{l_1}(a_{i_1},\dots,a_n,m,a_1,\dots,a_{i_1+l_1-n-2})\otimes a_{i_1+l_1-n-1}\otimes \dots \otimes a_{i_1-1}\right) \\
  &=  \sum_{l_1=1}^{n+1} \sum_{i_1=n-l_1+2}^n \sum_{l_2=1}^{n-l_1+2} \sum_{i_2=n-l_1-l_2+3}^{n-l_1+1} (-1)^{*_{i_1-1}(\mas)+*_{i_2-1}(b_{i_1,l_1}(\mas))} \\
  &\mu_{l_2}(a_{i_1+i_2+l_1-n-2},\dots,\mu_{l_1}(a_{i_1},\dots,a_n,m,a_1,\dots,a_{i_1+l_1-n-2}),\dots)\otimes \dots \otimes a_{i_1+i_2+l_1-n-3} \ .
\end{align*}
Applying the sign computation from part 3 of Lemma \ref{SignsHochschildProof}, we obtain:
\begin{align*}
A_4 &=  \sum_{l_1=1}^{n+1} \sum_{i_1=n-l_1+2}^n \sum_{l_2=1}^{n-l_1+2} \sum_{i_2=n-l_1-l_2+3}^{n-l_1+1} (-1)^{*_{i_1+i_2+l_1-n-3}+\maltese_{i_1+i_2+l_1-n-2}^{i_1-1}} \\
  &\mu_{l_2}(a_{i_1+i_2+l_1-n-2},\dots,\mu_{l_1}(a_{i_1},\dots,a_n,m,a_1,\dots,a_{i_1+l_1-n-2}),\dots)\otimes \dots \otimes a_{i_1+i_2+l_1-n-3} \ .
\end{align*}
Shifting the index $i_2$ by $i_1+l_1-n-2$ provides:
\begin{align*}
 A_4 &= \sum_{l_1=1}^{n+1} \sum_{i_1=n-l_1+2}^n \sum_{l_2=1}^{n-l_1+2} \sum_{i_2=i_1-l_2+1}^{i_1-1} (-1)^{*_{i_2-1}+\maltese_{i_2}^{i_1-1}} \\
   & \qquad \mu_{l_2}(a_{i_2},\dots,\mu_{l_1}(a_{i_1},\dots,a_n,m,a_1,\dots,a_{i_1+l_1-n-2}),\dots,a_{i_2+l_1+l_2-n-3})\otimes \dots \otimes a_{i_2-1} \\
   &=\sum_{l_1=1}^{n+1} \sum_{i_1=n-l_1+2}^n \sum_{i_2=i_1+l_1-n-1}^{i_1-1} \sum_{l_2=i_1-i_2+1}^{n-l_1+2} \cdots 
\end{align*}
Once again shifting $l_2$ by $l_1-1$ and renaming it by $d$ gives:
\begin{align*}
 A_4 &= \sum_{l_1=1}^{n+1} \sum_{i_1=n-l_1+2}^n \sum_{i_2=i_1+l_1-n-1}^{i_1-1} \sum_{d=i_1-i_2+l_1}^{n+1} (-1)^{*_{i_2-1}+\maltese_{i_2}^{i_1-1}} \\
   &\mu_{d+1-l_1}(a_{i_2},\dots,\mu_{l_1}(a_{i_1},\dots,a_n,m,a_1,\dots,a_{i_1+l_1-n-2}),\dots,a_{i_2+d-n-2})\otimes \dots \otimes a_{i_2-1} \\
   &= A_{41}+A_{42} \ ,
\end{align*}
where we put:
\begin{align*}
 A_{41} &:= \sum_{l_1=1}^{n+1} \sum_{i_1=n-l_1+2}^n \sum_{i_2=i_1+l_1-n-1}^{n-l_1+1} \sum_{d=i_1-i_2+l_1}^{n+1} (-1)^{*_{i_2-1}+\maltese_{i_2}^{i_1-1}} \\
   &\mu_{d+1-l_1}(a_{i_2},\dots,\mu_{l_1}(a_{i_1},\dots,a_n,m,a_1,\dots,a_{i_1+l_1-n-2}),\dots,a_{i_2+d-n-2})\otimes \dots \otimes a_{i_2-1} \\
 A_{42} &:= \sum_{l_1=1}^{n+1} \sum_{i_1=n-l_1+2}^n \sum_{i_2=n-l_1+2}^{i_1-1} \sum_{d=i_1-i_2+l_1}^{n+1} (-1)^{*_{i_2-1}+\maltese_{i_2}^{i_1-1}} \\
   &\mu_{d+1-l_1}(a_{i_2},\dots,\mu_{l_1}(a_{i_1},\dots,a_n,m,a_1,\dots,a_{i_1+l_1-n-2}),\dots,a_{i_2+d-n-2})\otimes \dots \otimes a_{i_2-1} \ .
\end{align*}
For $A_{41}$ we obtain:
\begin{align}
A_{41} &= \sum_{l_1=1}^{n+1} \sum_{i_2=0}^{n-l_1+1} \sum_{i_1=n-l_1+2}^{n+1+i_2-l_1}  \sum_{d=i_1-i_2+l_1}^{n+1} (-1)^{*_{i_2-1}+\maltese_{i_2}^{i_1-1}} \notag \\
   &\mu_{d+1-l_1}(a_{i_2},\dots,\mu_{l_1}(a_{i_1},\dots,a_n,m,a_1,\dots,a_{i_1+l_1-n-2}),\dots,a_{i_2+d-n-2})\otimes \dots \otimes a_{i_2-1}  \notag \\
    &= \sum_{i_2=0}^{n} \sum_{l_1=1}^{n-i-2+1} \sum_{d=n-i_2+2}^{n+1} \sum_{i_1=n-l_1+2}^{i_2+d-l_1}\cdots \notag \\
\Rightarrow A_{41} &= \sum_{i_2=0}^{n} \sum_{d=n-i_2+2}^{n+1} \sum_{l_1=1}^{n-i-2+1} \sum_{i_1=n-l_1+2}^{i_2+d-l_1}(-1)^{*_{i_2-1}+\maltese_{i_2}^{i_1-1}} \label{a41} \\
    &\mu_{d+1-l_1}(a_{i_2},\dots,\mu_{l_1}(a_{i_1},\dots,a_n,m,a_1,\dots,a_{i_1+l_1-n-2}),\dots,a_{i_2+d-n-2})\otimes \dots \otimes a_{i_2-1}  \ .\notag
\end{align}
Combining (\ref{a21}) with (\ref{a41}) results in:
\begin{align*}
&A_{21}+A_{41} = \sum_{i_2=0}^{n} \sum_{d=n-i_2+2}^{n+1} \sum_{l_1=1}^{n-i-2+1} (-1)^{*_{i_2-1}} \\ 
&\left( \sum_{i_1=i_2}^{n-l_1+1} (-1)^{\maltese_{i_2}^{i_1-1}} \mu_{d+1-l_1}(a_{i_2},\dots,\mu_{l_1}(a_{i_1},\dots,a_{i_1+l_1-1}),\dots, a_n,m,a_1,\dots,a_{i_2+d-n-2}) \right. \\
&+ \left. \sum_{i_1=n-l_1+2}^{i_2+d-l_1} (-1)^{\maltese_{i_2}^{i_1-1}} \mu_{d+1-l_1}(a_{i_2},\dots,\mu_{l_1}(a_{i_1},\dots,a_n,m,a_1,\dots,a_{i_1+l_1-n-2}),\dots,a_{i_2+d-n-2}) \right) \\
&\qquad \otimes a_{i_2+d-n-1} \otimes \dots \otimes a_{i_2-1} \ ,
\end{align*}
which, by abuse of notation, we write as follows:
\begin{align}
&A_{21}+A_{41} = \sum_{i_2=0}^{n} \sum_{d=n-i_2+2}^{n+1} \sum_{l_1=1}^{n-i-2+1} \sum_{i_1=i_2}^{i_2+d-l_1} (-1)^{*_{i_2-1}+\maltese_{i_2}^{i_1-1}} \label{a21a41} \\
 &\quad \mu_{d+1-l_1}(a_{i_2},\dots,\mu_{l_1}(a_{i_1},\dots,a_n,m,a_1,\dots,a_{i_1+l_1-n-2}),\dots,a_{i_2+d-n-2})\otimes \dots \otimes a_{i_2-1} \ . \notag
\end{align}
Considering $A_{42}$, we arrive at
\begin{align}
  A_{42} &= \sum_{l_1=1}^{n+1}  \sum_{i_2=n-l_1+2}^{n} \sum_{i_1=i_2+1}^n \sum_{d=i_1-i_2+l_1}^{n+1} (-1)^{*_{i_2-1}+\maltese_{i_2}^{i_1-1}} \notag  \\
   &\mu_{d+1-l_1}(a_{i_2},\dots,\mu_{l_1}(a_{i_1},\dots,a_n,m,a_1,\dots,a_{i_1+l_1-n-2}),\dots,a_{i_2+d-n-2})\otimes \dots \otimes a_{i_2-1} \notag \\
    &= \sum_{i_2=0}^{n} \sum_{l_1=n-i_2+2}^{n+1} \sum_{i_1=i_2+1}^n \sum_{d=i_1-i_2+l_1}^{n+1} \cdots = \sum_{i_2=0}^{n} \sum_{l_1=n-i_2+2}^{n+1} \sum_{d=l_1+1}^{n+1} \sum_{i_1=i_2+1}^{i_2+d-l_1}  \cdots \notag  \\
 &= \sum_{i_2=0}^{n} \sum_{d=n-i_1+2}^{n+1} \sum_{l_1=n-i_2+2}^{d-1} \sum_{i_1=i_2+1}^{i_2+d-l_1}  (-1)^{*_{i_2-1}+\maltese_{i_2}^{i_1-1}} \label{a42} \\
 &\mu_{d+1-l_1}(a_{i_2},\dots,\mu_{l_1}(a_{i_1},\dots,a_n,m,a_1,\dots,a_{i_1+l_1-n-2}),\dots,a_{i_2+d-n-2})\otimes \dots \otimes a_{i_2-1} \ . \notag
\end{align}
From (\ref{a21a41}) and (\ref{a42}) we can conclude:
\begin{align*}
 &A_{21}+A_4 = \sum_{i_2=0}^{n} \sum_{d=n-i_1+2}^{n+1} \sum_{l_1=1}^{d} \sum_{i_1=i_2}^{i_2+d-l_1}  (-1)^{*_{i_2-1}+\maltese_{i_2}^{i_1-1}} \\
 &\qquad \mu_{d+1-l_1}(a_{i_2},\dots,\mu_{l_1}(a_{i_1},\dots,a_n,m,a_1,\dots,a_{i_1+l_1-n-2}),\dots,a_{i_2+d-n-2})	\otimes \dots \otimes a_{i_2-1} \\
 &- \sum_{i_2=0}^n \sum_{d=n-i_2+2}^{n+1} \sum_{l_1=n-i_1+2}^d (-1)^{*_{i_2-1}} \\ 
 &\mu_{d+1-l_1}(\mu_{l_1}(a_{i_2},\dots,a_{i_2+l_1-1}),\dots,a_n,m,a_1,\dots,a_{i_1+l_1-n-2}),\dots,a_{i_2+d-n-2})\otimes \dots \otimes a_{i_2-1} \\
  &=: B_1 - B_2 \ ,
\end{align*}
where $B_1$ and $B_2$ are self-explanatorily defined as the respective big sums. We further compute that
\begin{align*}
B_1 &=  \sum_{i_2=0}^{n} \sum_{d=n-i_1+2}^{n+1} \sum_{l_1=1}^{i_2+d-n-1} \sum_{i_1=i_2}^{i_2+d-l_1}  (-1)^{*_{i_2-1}+\maltese_{i_2}^{i_1-1}} \\
 &\mu_{d+1-l_1}(a_{i_2},\dots,\mu_{l_1}(a_{i_1},\dots,a_n,m,a_1,\dots,a_{i_1+l_1-n-2}),\dots,a_{i_2+d-n-2})\otimes \dots \otimes a_{i_2-1} \\
 &+ \sum_{i_2=0}^{n} \sum_{d=n-i_1+2}^{n+1} \sum_{l_1=i_2+d-n}^{d} \sum_{i_1=i_2}^{i_2+d-l_1}  (-1)^{*_{i_2-1}+\maltese_{i_2}^{i_1-1}} \\
 &\mu_{d+1-l_1}(a_{i_2},\dots,\mu_{l_1}(a_{i_1},\dots,a_n,m,a_1,\dots,a_{i_1+l_1-n-2}),\dots,a_{i_2+d-n-2})	\otimes \dots \otimes a_{i_2-1} \ .
\end{align*}
Comparing this with (\ref{a23}) yields
\begin{align*}
  &A_{23} + B_1 = \sum_{i_2=0}^n \sum_{d=n-i_2+2}^{n+1} \sum_{l_1=1}^{i_2+d-n-1} (-1)^{*_{i_2-1}} \\ 
  &\left( \sum_{i_1=i_2}^n (-1)^{\maltese_{i_2}^{i_1-1}} \mu_{d+1-l_1}(a_{i_2},\dots,\mu_{l_1}(a_{i_1},\dots,a_{i_1+l_1-1}),\dots,m,\dots,a_{i_2+d-n-2}) \right. \\
      &+ \left. \sum_{i_1=0}^{i_2+d-n-l_1-1} (-1)^{\maltese_{i_2}^m + \maltese_0^{i_1-1}} \mu_{d+1-l_1}(a_{i_2},\dots,m,\dots,\mu_{l_1}(a_{i_1},\dots,a_{i_1+l_1-1}),\dots,a_{i_2+d-n-2}) \right)\\
      &\qquad \qquad \qquad \otimes a_{i_2+d-n-1} \otimes \dots \otimes a_{i_2-1}     \\
      &+\sum_{i_2=0}^n \sum_{d=n-i_2+2}^{n+1} \sum_{l_1=i_2+d-n}^d \sum_{i_1=i_2}^{i_2+d-l_1} (-1)^{*_{i_2-1}+\maltese_{i_2}^{i_1-1}} \\
      &\quad \mu_{d+1-l_1}(a_{i_2},\dots,\mu_{l_1}(a_{i_1},\dots,a_{i_1+l_1-1}),\dots,m,\dots,a_{i_2+d-n-2}) \otimes \dots \otimes a_{i_2-1} \ .
\end{align*}
Fixing $i_2$ and $d$ for the moment and writing down the $d$-th defining equation of the $A_\infty$-algebra $A$ for $a_{i_2},\dots,a_n,m,a_1,\dots,a_{i_2+d-n-2} \in A$, one obtains that:
\begin{align*}
&\sum_{l_1=1}^{i_2+d-n-1} \left( \sum_{i_1=i_2}^n (-1)^{\maltese_{i_2}^{i_1-1}} \mu_{d+1-l_1}(a_{i_2},\dots,\mu_{l_1}(a_{i_1},\dots,a_{i_1+l_1-1}),\dotsb,m,a_1,\dots,a_{i_2+d-n-2}) \right. \\
      &+ \left. \sum_{i_1=0}^{i_2+d-n-l_1-1} (-1)^{\maltese_{i_2}^n + \maltese_0^{i_1-1}} \mu_{d+1-l_1}(a_{i_2},\dots,m,a_1\dots,\mu_{l_1}(a_{i_1},\dots,a_{i_1+l_1-1}),\dots,a_{i_2+d-n-2}) \right) \\
      &+\sum_{l_1=i_2+d-n}^d \sum_{i_1=i_2}^{i_2+d-l_1} (-1)^{\maltese_{i_2}^{i_1-1}} \\  &\qquad \mu_{d+1-l_1}(a_{i_2},\dots,\mu_{l_1}(a_{i_1},\dots,a_{i_1+l_1-1}),\dots,a_n,m,a_1,\dots,a_{i_2+d-n-2}) = 0 \ .
\end{align*}
Thus, it holds that $A_{23}+B_1=0$ and consequently:
\begin{equation*}
 A_{21}+A_{23}+A_4 = - B_2 \quad \Leftrightarrow \quad  A_2 + A_4 = A_{22}-B_2 \ .
\end{equation*}
Since we have already shown that $A_1=0$, it remains to show that
\begin{equation}
\label{a3b2a22}
 A_3 = B_2-A_{22}  \ .
\end{equation}
Indeed, if (\ref{a3b2a22}) holds true, then we have shown that $b \circ b = A_1+A_2+A_3+A_4 = 0$, so we will spend the rest of this proof by checking (\ref{a3b2a22}). 

\bigskip

We give an explicit description of $A_3$:
\begin{align*}
   A_3 &= \sum_{l_1=1}^{n+1} \sum_{i_1=n-l_1+2}^n \sum_{l_2=1}^{n-l_1+2} \sum_{i_2=0}^{n-l_1-l_2+2} \left(b_{i_2,l_2} \circ b_{i_1,l_1}\right)(\mas) \\
     &= \sum_{l_1=1}^{n+1} \sum_{i_1=n-l_1+2}^n \sum_{l_2=1}^{n-l_1+2} \sum_{i_2=0}^{n-l_1-l_2+2} (-1)^{*_{i_1-1}} \\
     &\qquad b_{i_2,l_2}(\mu_{l_1}(a_{i_1},\dots,a_n,m,a_1,\dots,a_{i_1+l_1-n-2})\otimes a_{i_1+l_1-n-1}\otimes \dots \otimes a_{i_1-1} \\
     &= A_{31}+A_{32} \ ,
\end{align*}
where we put
\begin{align*}
 A_{31} &= \sum_{l_1=1}^{n+1} \sum_{i_1=n-l_1+2}^n \sum_{l_2=1}^{n-l_1+2}(-1)^{*_{i_1-1}} \\ 
 &\mu_{l_2}(\mu_{l_1}(a_{i_1},\dots,a_n,m^,\dots,a_{i_1+l_1-n-2}),\dots,a_{i_1+l_1+l_2-n-3})\otimes \dots \otimes a_{i_1-1} \ , \\
 A_{32} &= \sum_{l_1=1}^{n+1} \sum_{i_1=n-l_1+2}^n \sum_{l_2=1}^{n-l_1+2} \sum_{i_2=1}^{n-l_1-l_2+2} (-1)^{*_{i_1-1}(\mas)+\maltese_0^{i_2-1}(b_{i_1,l_1}(\mas))} \\
     &\mu_{l_1}(a_{i_1},\dots,a_n,m,\dots,a_{i_1+l_1-n-2})\otimes \dots \\ &\qquad \qquad \qquad \qquad \otimes \mu_{l_2}(a_{i_1+i_2+l_1-n-2},\dots,a_{i_1+i_2+l_1+l_2-n-3})\otimes \dots \otimes a_{i_1-1} \ .
\end{align*}
Renaming $i_1$ by $i_2$, shifting $l_2$ by $l_1-1$ and renaming it by $d$ leads to:
\begin{align*}
 A_{31} &= \sum_{l_1=1}^{n+1} \sum_{i_2=n-l_1+2}^n \sum_{d=l_1}^{n+1}(-1)^{*_{i_2-1}} \\ 
 &\quad \mu_{d+1-l_1}(\mu_{l_1}(a_{i_2},\dots,a_n,m,a_1,\dots,a_{i_2+l_1-n-2}),\dots,a_{i_1+d-n-2})\otimes \dots \otimes a_{i_2-1} \\
 &= \sum_{i_2=0}^n \sum_{l_1=n-i_1+2}^{n+1} \sum_{d=l_1}^{n+1} \cdots = \sum_{i_2=0}^n \sum_{d=n-i_2+2}^{n+1} \sum_{l_1=n-i_2+2}^d \cdots = B_2 \ .
\end{align*}
Therefore, (\ref{a3b2a22}) is a direct consequence of
\begin{equation*}
 A_{32}\stackrel{!}{=}-A_{22} \ .
\end{equation*}
If we again interchange the names of $i_1$ and $i_2$, $l_1$ and $l_2$, resp., the sum $A_{32}$ will look as follows:
\begin{align*}
 &A_{32} = \sum_{l_2=1}^{n+1} \sum_{i_2=n-l_2+2}^n \sum_{l_1=1}^{n-l_2+2} \sum_{i_1=1}^{n-l_1-l_2+2} (-1)^{*_{i_2-1}(m \otimes \dots \otimes a_n)+\maltese_0^{i_1-1}(b_{i_2,l_2}(m \otimes \dots  \otimes a_n))} \\
     &\mu_{l_2}(a_{i_2},\dots,m,\dots,a_{i_2+l_2-n-2})\otimes \dots \otimes \mu_{l_1}(a_{i_1+i_2+l_2-n-2},\dots,a_{i_1+i_2+l_1+l_2-n-3})\otimes \dots \otimes a_{i_2-1} \ .
\end{align*}
By part 4 of Lemma \ref{SignsHochschildProof}, this coincides with
\begin{align*}
 &A_{32} = - \sum_{l_2=1}^{n+1} \sum_{i_2=n-l_2+2}^n \sum_{l_1=1}^{n-l_2+2} \sum_{i_1=1}^{n-l_1-l_2+2} (-1)^{\maltese_0^{i_1+i_2+l_2-n-3}(m\otimes\dots\otimes a_n) + *_{i_2-l_1}\left(b_{i_1+i_2+l_2-n-2,l_1}(m\otimes\dots\otimes a_n)\right)} \\
     &\mu_{l_2}(a_{i_2},\dots,m,\dots,a_{i_2+l_2-n-2})\otimes \dots \otimes \mu_{l_1}(a_{i_1+i_2+l_2-n-2},\dots,a_{i_1+i_2+l_1+l_2-n-3})\otimes \dots \otimes a_{i_2-1} \\
     &= -\sum_{l_2=1}^{n+1} \sum_{i_2=n-l_2+2}^n \sum_{l_1=1}^{n-l_2+2} \sum_{i_1=i_2+l_2-n-1}^{i_2-l_1} (-1)^{\maltese_0^{i_1-1}(\mas) + *_{i_2-l_1}(b_{i_1,l_1}(\mas))} \\
     &\qquad \mu_{l_2}(a_{i_2},\dots,a_n,m,a_1,\dots,a_{i_2+l_2-n-2})\otimes \dots \otimes \mu_{l_1}(a_{i_1},\dots,a_{i_1+l_1-1})\otimes \dots \otimes a_{i_2-1} \\    
     &= -\sum_{l_2=1}^{n+1} \sum_{i_2=n-l_2+2}^n \sum_{l_1=1}^{n-l_2+2} \sum_{i_1=i_2+l_2-n-1}^{i_2-l_1}  \\
     &\qquad b_{i_2-l_1+1,l_2}\left((-1)^{\maltese_0^{i_1-1}}(m\otimes a_1 \otimes \dots \otimes \mu_{l_1}(a_{i_1},\dots,a_{i_1+l_1-1})\otimes \dots \otimes a_n \right) \\
     &= - \sum_{l_2=1}^{n+1} \sum_{l_1=1}^{n-l_2+2} \sum_{i_2=n-l_2+2}^n  \sum_{i_1=i_2+l_2-n-1}^{i_2-l_1} \left(b_{i_2-l_1+1,l_2} \circ b_{i_1,l_1} \right)(\mas) \\
     &= - \sum_{l_1=1}^{n+1} \sum_{i_2=1}^{n-l_1+1} \sum_{l_2=n-i_2-l_1+3}^{n-l_1+2}  \sum_{i_1=i_2+l_1+l_2-n-2}^{i_2-1} \left(b_{i_2,l_2} \circ b_{i_1,l_1} \right)(\mas) \\
     &= - \sum_{l_1=1}^{n+1} \sum_{i_1=0}^{n-l_1+1} \sum_{i_2=i_1+1}^{n-l_1+1}  \sum_{l_2=n-i_2-l_1+3}^{n+i_1-i_2-l_1+2}   \left(b_{i_2,l_2} \circ b_{i_1,l_1} \right)(\mas) = - A_{22} \ , 
\end{align*}
which eventually concludes our proof.
\end{proof}

The following lemma completes this section by delivering the missing sign computations from the proof of Theorem \ref{HochschildDifferential}.

\begin{lemma}
 \label{SignsHochschildProof}
 Let $m,a_1,\dots,a_n \in A$ and let '$\equiv$' denote congruence modulo two.
 \begin{enumerate} 
  \item If  $l_1 \in \{1,2,\dots,n+1\}$, $i_1 \in \{0,1,\dots,n-l_1+1\}$ and $0 \leq i_2 \leq i_1$, then
 \begin{align*}
  &\maltese_0^{i_1-1}(\mas)+*_{i_2-1}(b_{i_1,l_1}(\mas)) \\ &\equiv  *_{i_2-1}(\mas) + \maltese_{i_2}^{i_1-1}(\mas) \ .
 \end{align*}
 \item If  $l_1 \in \{1,2,\dots,n+1\}$, $i_1 \in \{0,1,\dots,n-l_1+1\}$ and $i_1 < i_2 \leq n-l_1+1$, then
 \begin{align*}
  &*_{i_2-1}(b_{i_1,l_1}(\mas)) \\
  &=  *_{i_2+l_1-2}(\mas) + \maltese_{i_2+l_1-1}^n(\mas) \ .
 \end{align*}
 \item If  $l_1 \in \{1,2,\dots,n+1\}$, $i_1 \in \{n-l_1+2,\dots,n\}$, $l_2 \in \{1,2\dots,n-l_1+2\}$ and $i_2 \in \{ n-l_1-l_2+3,\dots, n-l_1+1\}$, then
 \begin{align*}
  &*_{i_1-1}(\mas) + *_{i_2-1}(b_{i_1,l_1}(\mas)) \\
  &\equiv *_{i_1+i_2+l_1-n-3}(\mas) + \maltese_{i_1+i_2+l_1-n-2}^{i_1-1}(\mas) \ .
 \end{align*}
 \item If $l_2 \in \{1,2,\dots,n+1\}$, $i_2 \in \{n-l_2+2,\dots,n\}$, $l_1 \in \{1,2,\dots,n-l_2+2\}$ and $i_1 \in \{1,2,\dots,n-l_1-l_2+2\}$, then
 \begin{align*}
  &*_{i_2-1}(\mas)+ \maltese_0^{i_1-1}(b_{i_2,l_2}(\mas)) +1\\
  &\equiv \maltese_0^{i_1+i_2+l_2-n-3}(\mas) + *_{i_2-l_1}(b_{i_1+i_2+l_2-n-2,l_1}(\mas))  \ .
 \end{align*}
 \end{enumerate}
\end{lemma}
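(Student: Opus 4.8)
The plan is to verify each of the four congruences by the same mechanical recipe: substitute the explicit formula for the inner operation ($b_{i_1,l_1}$ in parts 1--3, $b_{i_2,l_2}$ in part 4) from its definition, record precisely how the tensor factors of $\mas$ are relabelled once that operation has been applied, and then rewrite the quantities $\maltese_0^{\bullet}$, $\maltese_{\bullet}^{\bullet}$ and $*_{\bullet}$ evaluated on the resulting element in terms of the same quantities evaluated on $\mas$. The only algebraic input is the degree identity \eqref{degreeofmu}: replacing a block $a_p,\dots,a_{p+l-1}$ by $\mu_l(a_p,\dots,a_{p+l-1})$ turns the partial sum $\maltese_p^{p+l-1}$ of reduced indices into $\maltese_p^{p+l-1}+1$. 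Every ``extra $+1$'' appearing below is exactly one such degree shift.

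For parts 1 and 2 the operation $b_{i_1,l_1}$ is non-overlapping ($0\le i_1\le n-l_1+1$), so $m$ stays in place and $l_1$ consecutive $A$-factors are merged into one, shifting each later position down by $l_1-1$. First I would establish the bookkeeping identities: for $i_2\le i_1$ one has $\maltese_0^{i_2-1}(b_{i_1,l_1}(\mas))=\maltese_0^{i_2-1}(\mas)$ and $\maltese_{i_2}^{\,n-l_1+1}(b_{i_1,l_1}(\mas))=\maltese_{i_2}^{n}(\mas)+1$, whereas for $i_2>i_1$ one has $\maltese_0^{i_2-1}(b_{i_1,l_1}(\mas))=\maltese_0^{\,i_2+l_1-2}(\mas)+1$ and $\maltese_{i_2}^{\,n-l_1+1}(b_{i_1,l_1}(\mas))=\maltese_{\,i_2+l_1-1}^{n}(\mas)$. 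Feeding these into $*_{j}=\maltese_0^{j}\cdot\maltese_{j+1}^{n}$ and expanding the product yields part 2 as an \emph{exact} integer identity, since $(X+1)Y=XY+Y$; and it yields part 1 after reducing modulo $2$, using $2\mu(m)\equiv 0$ and the splitting $\maltese_0^{i_1-1}+\maltese_0^{i_2-1}\equiv\maltese_{i_2}^{i_1-1}$.

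For parts 3 and 4 the inner operation is overlapping, so $m$ is absorbed into a single $\mu$-term which becomes the \emph{new} module factor, and the surviving $A$-factors are cyclically rotated. Here I would first compute the index of the new module element, e.g. $\mu\big(\mu_{l_1}(a_{i_1},\dots,a_n,m,a_1,\dots,a_{i_1+l_1-n-2})\big)=\maltese_{i_1}^{n}(\mas)+\mu(m)+\maltese_1^{\,i_1+l_1-n-2}(\mas)+1$ via \eqref{degreeofmu}, re-index the rotated factors $a_{i_1+l_1-n-1},\dots,a_{i_1-1}$, and then expand $*_{i_2-1}$ (part 3) or $\maltese_0^{i_1-1}$ (part 4) on the rotated element by splitting each sum of reduced indices across the cyclic break and recombining. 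Part 3 then follows by adding the prefactor $*_{i_1-1}(\mas)$ and cancelling terms modulo $2$; part 4 follows after recognising that the left-hand side (an overlapping $b_{i_2,l_2}$ followed by a non-overlapping $b_{i_1,l_1}$) and the right-hand side (a non-overlapping $b_{i_1+i_2+l_2-n-2,l_1}$ followed by an overlapping $b_{i_2,l_2}$) produce the same rearranged monomial, the lone $+1$ encoding the sign discrepancy between the two orders.

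I expect part 4 to be the genuine obstacle, because $*_{\bullet}$ is quadratic in the reduced indices, so the claimed congruence cannot be checked term by term: one must expand both products $\maltese_0^{\bullet}\cdot\maltese_{\bullet}^{n}$ in full and show that the surviving cross-terms coincide modulo $2$. The combinatorial core is the matching of the cyclic rotation produced by $b_{i_2,l_2}$ against the shift produced by $b_{i_1+i_2+l_2-n-2,l_1}$, which is precisely what turns the identity $A_{32}=-A_{22}$ in the proof of Theorem~\ref{HochschildDifferential} into an equality. Once these quadratic cross-terms are handled, everything else reduces to the additivity $\maltese_i^{j}=\maltese_i^{p}+\maltese_{p+1}^{j}$ of bracket sums and the congruence $2k\equiv0$.
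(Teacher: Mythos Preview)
Your plan is correct and is essentially the same as the paper's own proof: in each part one expands $*_{\bullet}$ or $\maltese_0^{\bullet}$ on the output of the inner $b$-operation using the degree identity $\|\mu_l(a_1,\dots,a_l)\|\equiv\maltese_1^l+1$, tracks how the tensor positions shift (trivially in parts 1--2, cyclically in parts 3--4), and simplifies modulo $2$. The paper carries this out slightly more tersely, absorbing your ``bookkeeping identities'' directly into the computation of each $*_{i_2-1}(b_{i_1,l_1}(\cdots))$ rather than stating them separately, but the logical content is identical; your observation that part~2 is in fact an exact integer identity (not merely a congruence) is also correct.
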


\begin{proof} Recall that for every $l \in \NN$ and $a_1,\dots,a_l \in A$ we have:
\begin{equation*}
 \| \mu_l(a_1,\dots,a_l)\| \equiv \maltese_1^l(a_1\otimes \dots \otimes a_l) +1 \ .
\end{equation*}
We put $\maltese_i^j := \maltese_i^j(\mas)$, $*_i := *_i(\mas)$, whenever defined.
 \begin{enumerate}
  \item If $i_2 \leq i_1 \leq n-l_1+1$, then
\begin{align*}
 *_{i_2-1}(b_{i_1,l_1}(\mas))  &\equiv \maltese_0^{i_2-1} \left(\maltese_{i_2}^{i_1-1} + \|\mu_{l_1}(a_{i_1},\dots,a_{i_1+l_1-1})\| + \maltese_{i_1+l_1}^n \right) \\
 &\equiv \maltese_0^{i_2-1}(\maltese_{i_2}^n + 1) \equiv *_{i_2-1}+\maltese_0^{i_2-1} \ .
\end{align*}
The claim immediately follows.
\item If $i_1<i_2 \leq n-l_1+1$, then
\begin{align*}
 &*_{i_2-1}(b_{i_1,l_1}(\mas)) \\ &\equiv \left(\maltese_0^{i_1-1}+ \|\mu_{l_1}(a_{i_1},\dots,a_{i_1+l_1-1}) \| + \maltese_{i_1+l_1}^{i_2+l_1-2}\right) \maltese_{i_2-l_1-1}^n \\      
&\equiv \left( \maltese_0^{i_2+l_1-2}+1\right)\maltese_{i_2+l_1-1}^n \equiv *_{i_2+l_1-2} + \maltese_{i_2+l_1-1}^n \ .
\end{align*}

\item If $n-l_1+2 \leq i_1 \leq n$, then we get
\begin{align*}
 &*_{i_2-1}(b_{i_1,l_1}(\mas)) \\ &\equiv \left(\mu_M\left( \mu_{l_1}(a_{i_1},\dots,a_n,m,a_1,\dots,a_{i_1+l_1-n-2})\right) + \maltese_{i_1+l_1-n-1}^{i_1+i_2+l_1-n-3} \right) \maltese_{i_1+i_2+l_1-n-2}^{i_1-1} \\
&\equiv \left(\maltese_{i_1}^n + \maltese_0^{i_1+i_2+l_1-n-3}+1 \right) \maltese_{i_1+i_2+l_1-n-2}^{i_1-1} \\
&\equiv \maltese_{i_1+i_2+l_1-n-2}^	{i_1-1} \cdot \maltese_{i_1}^n + \maltese_0^{i_1+i_2+l_1-n-3} \cdot \maltese_{i_1+i_2+l_1-n-2}^{i_1-1} + \maltese_{i_1+i_2+l_1-n-2}^{i_1-1} \ .
\end{align*}
Consequently:
\begin{align*}
 &*_{i_1-1}(\mas)+*_{i_2-1}(b_{i_1,l_1}(\mas)) \\
&\equiv \maltese_0^{i_1-1} \cdot \maltese_{i_1}^n+ \maltese_{i_1+i_2+l_1-n-2}^{i_1-1} \cdot \maltese_{i_1}^n + \maltese_0^{i_1+i_2+l_1-n-3} \cdot \maltese_{i_1+i_2+l_1-n-2}^{i_1-1} + \maltese_{i_1+i_2+l_1-n-2}^{i_1-1} \\
&\equiv \maltese_0^{i_1+i_2+l_1-n-3} \cdot \maltese_{i_1}^n+ \maltese_0^{i_1+i_2+l_1-n-3} \cdot \maltese_{i_1+i_2+l_1-n-2}^{i_1-1} + \maltese_{i_1+i_2+l_1-n-2}^{i_1-1} \\
&\equiv *_{i_1+i_2+l_1-n-3} + \maltese_{i_1+i_2+l_1-n-2}^{i_1-1} \ ,
\end{align*}
which we had to show.
\item Note that in this case
\begin{align*}
 \maltese_0^{i_1-1}(b_{i_2,l_2}(\mas))  &\equiv \mu_M\left(\mu_{l_2}(a_{i_2},\dots,m,\dots,a_{i_2+l_2-n-2}) \right) + \maltese_{i_2+l_2-n-1}^{i_1+i_2+l_2-n-3} \\
 &\equiv \maltese_{i_2}^n + \maltese_0^{i_1+i_2+l_1-n-3}+1 
\end{align*}
and therefore
\begin{align*}
 &*_{i_2-1}(\mas) + \maltese_0^{i_1-1}(b_{i_2,l_2}(\mas)) \\
&\equiv 1+ \maltese_0^{i_1+i_2+l_1-n-3} + \left(\maltese_0^{i_2-1}+1\right)\maltese_{i_2}^n \\
&\equiv 1+ \maltese_0^{i_1+i_2+l_1-n-3}+  \\ &\left(\maltese_0^{i_1+i_2+l_2-n-3}+\|\mu_{l_1}(a_{i_1+i_2+l_2-n-2}, \dots, a_{i_1+i_2+l_1+l_2-n-3})\|+ \maltese_{i_1+i_2+l_1+l_2-n-2}^{i_2-1}\right)\maltese_{i_2}^n \\
&\equiv 1+ \maltese_0^{i_1+i_2+l_1-n-3} + *_{i_2-l_1}(b_{i_1+i_2+l_2-n-2,l_1}(\mas)) \ ,
\end{align*}
which completes the proof.
 \end{enumerate}
\end{proof}

As we announced at the end of Section \ref{SectionAinftyalgebras}, we show that a morphism of $A_\infty$-bimodules $M \to N$ induces a chain map $CH_*(A;M) \to CH_*(A;N)$. This is the appropriate functoriality result for our considerations. 

\bigskip
\label{Defbfrs} 
Let $f=(f_{r,s})_{r,s \in \NN_0}: M \to N$ be a morphism of $A_\infty$-bimodules of degree $d$ and define for every $r,s \in \NN_0$ an $R$-module homomorphism
\begin{equation*}
 \bfrs: CH_*(A;M) \to CH_*(A;N)
\end{equation*}
as the $\ZZ$-linear extension of 
\begin{equation*}
 m \otimes a_1 \otimes a_n \mapsto \begin{cases}
                                    f_{r,s}(a_{n-r+1},\dots,a_n,m,a_1,\dots,a_s)\otimes a_{s+1}\otimes \dots \otimes a_{n-r} & n \leq r+s \ , \\
					0									& n>r+s \ .
                                   \end{cases}
\end{equation*}
Define another $R$-module homomorphism by: \index{morphism of $A_\infty$-bimodules!induced Hochschild chain map}
\begin{align*}
 f_*:CH_*(A;M) &\to CH_*(A;N) \ , \\
 \mas &\mapsto \sum_{r,s \in \NN_0} (-1)^{\dag_r} \bfrs(\mas) \ ,
\end{align*}
where \nomenclature{$\dag_r$}{exponents of the signs defining induced Hochschild chain maps}
\begin{equation*}
\dag_r := \dag_r(\mas) :=*_{n-r} + d \cdot \deg(m \otimes a_1 \otimes \dots \otimes a_n) \ .
\end{equation*}

\begin{theorem} \index{morphism of $A_\infty$-bimodules!induced Hochschild chain map}
\label{inducedHochschildchainmap}
 If $f: M \to N$ is a morphism of $A_\infty$-bimodules over $A$ of degree $d$, then the induced map $f_*: CH_*(A;M) \to CH_*(A;N)$ will be a chain map of degree $-d$.
\end{theorem}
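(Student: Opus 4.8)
The plan is to mimic the proof of Theorem~\ref{HochschildDifferential} as closely as possible, using the single defining equation of the morphism $f$ from Definition~\ref{DefMorphismAinftyBimodules} in place of the two defining equations exploited there. Writing $b_M$ and $b_N$ for the Hochschild differentials with coefficients in $M$ and $N$, I would establish the identity in the form $b_N \circ f_* = (-1)^{d}\, f_* \circ b_M$, which is exactly the relation characterizing a chain map of degree $-d$.

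I would begin with the statement about degrees. Using $\deg f_{r,s} = d-r-s$ together with $\deg \mu_l = 2-l$ and $\deg \mu^M_{r,s} = 1-r-s$, a direct reduced-index computation in the spirit of \eqref{degreeofmu} shows that each $\bfrs$ lowers the Hochschild degree by exactly $d$, so $\deg f_* = -d$; this is the easy part.

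For the chain-map relation I would expand both $b_N \circ f_*$ and $f_* \circ b_M$ as double sums of compositions of the elementary pieces $\mu^n_{i,l}$ (for $N$, resp.\ $M$) with the maps $\bfrs$, and then sort the resulting summands by the combinatorial position of the algebra multiplication $\mu_l$ relative to the \emph{$f$-window}, i.e.\ the block $a_{n-r+1},\dots,a_n,m,a_1,\dots,a_s$ fed into $f_{r,s}$. Four families arise, paralleling $A_1,\dots,A_4$ above: (i)~terms in which $\mu_l$ acts on free tensor factors lying outside the window; (ii)~terms in which $\mu_l$ is nested inside the arguments of $f$; (iii)~terms in which the module operation $\mu^M_{r_2,s_2}$ is nested inside $f$; and (iv)~the overlapping terms in which $f$ is itself an argument of an operation $\mu^N_{r_1,s_1}$. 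The non-interacting terms of family~(i) appear once in $f_*\circ b_M$, where $\mu_l$ acts before $f$, and once in $b_N\circ f_*$, where the same $\mu_l$ acts on the free tail factors after $f$; after reindexing, they cancel against one another in the difference $b_N\circ f_* - (-1)^{d}f_*\circ b_M$. The surviving terms of families (ii)--(iv), for each fixed output configuration and each fixed choice of free factors, reassemble precisely into the right-hand side (families (ii) and (iii)) and the left-hand side (family (iv)) of the defining equation of the morphism $f$, tensored with the free factors; since $f$ is a morphism of $A_\infty$-bimodules, each such bracket vanishes.

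As with Theorem~\ref{HochschildDifferential}, the real difficulty is entirely in the bookkeeping of signs rather than in the combinatorics. The decisive point is that the exponent $\dag_r = *_{n-r} + d\cdot\deg(\mas)$ built into the definition of $f_*$ is engineered so that commuting $f_*$ past a differential, which shifts the Hochschild degree by $1$, produces exactly the Koszul factor $(-1)^{d}$ that turns the morphism equation (whose right-hand side already carries a $+d$ in every sign exponent) into the chain-map identity. I would therefore expect to need a sign lemma analogous to Lemma~\ref{SignsHochschildProof}, reconciling the $*$, $\maltese$, $\mu(m)$ and $d$-dependent contributions so that (1)~the family-(i) terms cancel with matching signs and (2)~the surviving terms carry exactly the coefficients $(-1)^{d \cdot \maltese_1^{r_1}}$, $(-1)^{\maltese_1^{i-1}+d}$, $(-1)^{\maltese_1^{r_1}+d}$ and $(-1)^{\maltese_1^{r+i-1}+\mu(m)+d}$ of Definition~\ref{DefMorphismAinftyBimodules}. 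Verifying these congruences modulo~$2$ is the main obstacle.
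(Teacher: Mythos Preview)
Your strategy is essentially the one the paper carries out: decompose $b\circ f_*$ and $f_*\circ b$ according to the position of the $A_\infty$-operation relative to the $f$-window, match the ``free-factor'' terms directly, and invoke the defining equation of the morphism $f$ to identify the remaining pieces. The paper organizes this as a forward transformation---compute $b\circ f_*$, rewrite the $\mu^N(\dots,f(\dots),\dots)$ block via the morphism equation, and recognize the result as $f_*\circ b$---rather than as a difference, but the content and the required sign bookkeeping are the same.

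There is one genuine slip. The relation the paper actually proves is $b\circ f_* = f_*\circ b$ with \emph{no} sign, not $b_N\circ f_* = (-1)^d f_*\circ b_M$. You correctly observe that passing $f_*$ across the differential shifts $\deg$ by $1$ inside $\dag_r$ and hence contributes a factor $(-1)^d$; but this factor is precisely what cancels the uniform $+d$ appearing in every exponent on the right-hand side of Definition~\ref{DefMorphismAinftyBimodules}, so that after the morphism equation is applied the two sides agree without residual sign. Your third paragraph identifies this mechanism correctly, but your opening paragraph states the target identity with one $(-1)^d$ too many; following your plan as written you would arrive at $b\circ f_* = f_*\circ b$ and find the expected extra sign absent.
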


\begin{proof}
First we check the degree statement. Since $\deg \frs = d-r-s$ for all $r,s \in \NN_0$, we obtain:
\begin{align*}
 &\deg \bfrs(\mas) = \deg \left(f_{r,s}(a_{n-r+1},\dots,a_n,m,a_1,\dots,a_s)\otimes a_{s+1}\otimes \dots \otimes a_{n-r} \right) \\
  &= -\mu(\frs(a_{n-r+1},\dots,a_n,m,a_1,\dots,a_s)) - \sum_{q=s+1}^{n-r} \|a_q\| \\
 &= -d + r + s - \sum_{q=n-r+1}^n \mu(a_q)- \mu(m) - \sum_{q=1}^s \mu(a_q) - \sum_{q=s+1}^{n-r} \|a_q\| 
\end{align*}
 \begin{align*}
 &= -d - \sum_{q=n-r+1}^n \|a_q\|- \mu(m) - \sum_{q=1}^s \|a_q\| - \sum_{q=s+1}^{n-r} \|a_q\| \\
  &= -d + \deg(\mas) \ .
\end{align*}
So every $\bfrs$, thus $f_*$ as well, is a graded $R$-module homomorphism of degree $-d$. 

 We need to show that $b \circ f_* = f_* \circ b$ and start by considering the maps $b_{i,l} \circ \bfrs$ in greater detail for different choices of $i$, $l$, $r$ and $s$. 

Assume that $l$, $r$ and $s$ are given with $r+s \leq n$, $1 \leq l \leq n+1$. For $i=0$ we have:
\begin{align}
 &(b_{0,l} \circ \bfrs)(m \otimes a_1 \otimes \dots \otimes a_n) \notag  \\
 &= b_{0,l}(\frs(a_{n-r+1},\dots,a_n,m,a_1,\dots,a_s)\otimes a_{s+1}\otimes \dots \otimes a_{n-r}) \notag \\
 &= \mu^N_{0,l-1}(\frs(a_{n-r+1},\dots,a_n,m,a_1,\dots,a_s), a_{s+1},\dots,a_{s+l-1})\otimes a_{s+l} \otimes \dots \otimes a_{n-r}) \ . \label{inull}
\end{align}
For $1 \leq i \leq n-r-s-l+1$ we get:
\begin{align*}
 &(b_{i,l} \circ \bfrs)(m \otimes a_1 \otimes \dots \otimes a_n) \\
 &= b_{i,l}(\frs(a_{n-r+1},\dots,a_n,m,a_1,\dots,a_s)\otimes a_{s+1} \otimes \dots \otimes a_{n-r}) \\
 &= (-1)^{\mu(\frs(a_{n-r+1},\dots,a_n,m,a_1,\dots,a_s)) + \maltese_{s+1}^{s+i-1}} \\ &\quad \frs(a_{n-r+1},\dots,a_n,m,a_1,\dots,a_s)\otimes \dots \otimes a_{s+i-1}\otimes \mu_l(a_{s+i},\dots,a_{s+i+l-1})\otimes \dots \otimes a_{n-r} \\
 &= (-1)^{\maltese_{n-r+1}^n + \maltese_0^{s+i-1}+d } \\ &\quad \frs(a_{n-r+1},\dots,a_n,m,a_1,\dots,a_s)\otimes \dots \otimes a_{s+i-1}\otimes \mu_l(a_{s+i},\dots,a_{s+i+l-1})\otimes \dots \otimes a_{n-r} \\
 &= (-1)^{\maltese_{n-r+1}^n + d} \bfrs\left((-1)^{\maltese_0^{s+i-1}} m \otimes a_1 \otimes \dots \otimes \mu_l(a_{s+i},\dots,a_{s+i+l-1})\otimes \dots \otimes a_n \right) \ ,
\end{align*}
which finally leads to
\begin{equation}
 \label{idazwischen}
(b_{i,l} \circ \frs)(m\otimes a_1 \otimes \dots \otimes a_n) = (-1)^{\maltese_{n-r+1}^n + d} (\bfrs \circ b_{s+i,l})(\mas)
\end{equation}
for every $1 \leq i \leq n-r-s-l+1$. 


For the third and last case, i.e. $n-r-s-l+2 \leq i \leq n-r-s$, we obtain
\begin{align*}
 &(b_{i,l} \circ \bfrs)(m \otimes a_1 \otimes \dots \otimes a_n) \\
 &= b_{i,l}( \frs(a_{n-r+1},\dots,a_n,m,a_1,\dots,a_s)\otimes a_{s+1}\otimes \dots a_{n-r}) \\
&=(-1)^{\left(\mu(\frs(a_{n-r+1},\dots,a_n,m,a_1,\dots,a_s)) + \maltese_{s+1}^{s+i-1}\right)\maltese_{s+i}^{n-r}} \\
&\qquad \mu^N_{n-r-s-i+1,l-n+r+s+i-2}(a_{s+i},\dots,\frs(a_{n-r+1},\dots,m,\dots,a_s),\dots,a_{l-n+r+2s+i-2})\\
&\phantom{boooooooooooooooooooooooooooooooooooooooooooooooooooooooooooooooo}\otimes \dots \otimes a_{s+i-1} \\
 &= (-1)^{\left( \maltese_{n-r+1}^n +\maltese_0^{s+i-1}+d\right) \maltese_{s+i}^{n-r}} \\
 &\qquad \mu^N_{n-r-s-i+1,l-n+r+s+i-2}(a_{s+i},\dots,\frs(a_{n-r+1},\dots,m,\dots,a_s),\dots,a_{l-n+r+2s+i-2}) \\ 
 &\phantom{boooooooooooooooooooooooooooooooooooooooooooooooooooooooooooooooo}\otimes \dots \otimes a_{s+i-1} \ .
\end{align*}
To simplify the sign, note that modulo 2 it holds that
\begin{align*}
 \left( \maltese_{n-r+1}^n + \maltese_0^{s+i-1} +d \right)  \maltese_{s+i}^{n-r} &= \maltese_{n-r+1}^n  \cdot \maltese_{s+i}^{n-r} +  \maltese_0^{s+i-1} \cdot \maltese_{s+i}^{n-r}  + d \cdot \maltese_{s+i}^{n-r} \\
&= \maltese_{n-r+1}^n \cdot \maltese_{s+i}^{n-r}  + *_{s+i-1} + \maltese_0^{s+i-1} \cdot \maltese_{n-r+1}^n +d \cdot \maltese_{s+i}^{n-r} \\
&= \maltese_0^{n-r} \cdot \maltese_{n-r+1}^n + *_{s+i-1} + d \cdot \maltese_{s+i}^{n-r} \\
&= *_{n-r} + *_{s+i-1} + d \cdot \maltese_{s+i}^{n-r} \ .
\end{align*}
Hence if $n-r-s-l+2 \leq i \leq n-r-s$, we will have
\begin{align}
&( b_{i,l} \circ \bfrs)(m \otimes a_1 \otimes \dots \otimes a_n) = (-1)^{*_{n-r}+*_{s+i-1}+d \cdot \maltese_{s+i}^{n-r}} \label{igross} \\ 
&\qquad \mu^N_{n-r-s-i+1,l-n+r+s+i-2}(a_{s+i},\dots,\frs(a_{n-r+1},\dots,m,\dots,a_s),\dots,a_{l-n+r+2s+i-2}) \notag \\
&\phantom{booooooooooooooooooooooooooooooooooooooooooooooooooooooooooooooo}\otimes  \dots \otimes a_{s+i-1} \ . \notag
\end{align}
If we put $b_l := \sum_{i \in \NN_0} b_{i,l}$ for every $l \in \NN$, then equations (\ref{inull}), (\ref{idazwischen}) and (\ref{igross}) will together yield
\begin{align}
 &(b_l \circ \bfrs)(m \otimes a_1 \otimes \dots \otimes a_n) \notag \\
 &= \sum_{i=1}^{n-r-s-l+1} (-1)^{\maltese_{n-r+1}^n+d} ( \bfrs \circ b_{s+i,l})(m \otimes a_1 \otimes \dots \otimes a_n) \tag{$S_1$} \label{S1} \\
 &\quad + \mu^N_{0,l-1}(\frs(a_{n-r+1},\dots,a_n,m,a_1,\dots,a_s),a_{s+1},\dots,a_{s+l-1})\otimes a_{s+l}\otimes \dots \otimes a_{n-r} \tag{$S_2$} \label{S2} \\
 &\quad + \sum_{i=n-r-s-l+2}^{n-r-s} (-1)^{*_{n-r}+*_{s+i-1}+ d \cdot \maltese_{s+i}^{n-r}} \tag{$S_3$} \label{S3} \\ 
 &\qquad \quad \mu^N_{n-r-s-i+1,l-n+r+s+i-2}(a_{s+i},\dots,\frs(a_{n-r+1},\dots,m,\dots,a_s),\dots,a_{l-n+r+2s+i-2}) \notag \\
 &\phantom{boooooooooooooooooooooooooooooooooooooooooooooooooooooooooooooo}\otimes \dots \otimes a_{s+i-1} \ . \notag
\end{align}
We denote the three parts of the right-hand side by $S_1$, $S_2$ and $S_3$, resp., according to the labeling of the three lines. Performing additional index shifts to $S_1$ and $S_3$ results in
\begin{align*}
 S_1 = &\sum_{i=s+1}^{n-r-l+1} (-1)^{\maltese_{n-r+1}^n + d} (\bfrs \circ b_{i,l})(m \otimes a_1 \otimes \dots a_n) \ , \\
 S_3 = &\sum_{i=1}^{l-1} (-1)^{*_{n-r}+*_{n-r+i-l}+ d \cdot \maltese_{n-r+i-l+1}^{n-r}} \\
 &\mu^N_{l-i,i-1}(a_{n-r+i-l+1},\dots,\frs(a_{n-r+1},\dots,m,\dots,a_s),\dots,a_{s+i-1})\otimes \dots \otimes a_{n-r+i-l} \ .
\end{align*}
One can easily see that adding $S_2$ to $S_3$ corresponds to extending the sum in $S_3$ to the case $i = l$. In conclusion:
\begin{align*}
 &\left(b_l \circ \bfrs\right)(m \otimes a_1 \otimes \dots \otimes a_n) \\
&= \sum_{i=s+1}^{n-r-l+1} (-1)^{\maltese_{n-r+1}^n + d} \left( \bfrs \circ b_{i,l} \right)(m \otimes a_1 \otimes \dots \otimes a_n) \\
&+ \sum_{i=1}^l (-1)^{*_{n-r}+*_{n-r+i-l}+d \cdot \maltese_{n-r+i-l+1}^{n-r}} \\
&\quad \mu^N_{l-i,i-1}(a_{n-r+i-l+1},\dots,\frs(a_{n-r+1},\dots,m,\dots,a_s),\dots,a_{s+i-1})\otimes \dots \otimes a_{n-r+i-l} \\
&= (-1)^{\maltese_{n-r+1}^n + d} \Bigl( \bfrs \circ \Bigl( \sum_{i=s+1}^{n-r-l+1} b_{i,l} \Bigr) \Bigr)(m \otimes a_1 \otimes \dots \otimes a_n) \\
&+ (-1)^{*_{n-r}} \Bigl(\sum_{i=1}^l (-1)^{*_{n-r+i-l}+d \cdot \maltese_{n-r+i-l+1}^{n-r}} \Bigr. \\
&\qquad  \Bigl. \mu^N_{l-i,i-1}(a_{n-r+i-l+1},\dots,\frs(a_{n-r+1},\dots,m,\dots,a_s),\dots,a_{s+i-1})\otimes \dots \otimes a_{n-r+i-l} \Bigr) \ .
\end{align*}
Summing up over all $l$, we therefore obtain:
\begin{align}
 &\left(b \circ \bfrs\right)(m \otimes a_1 \otimes \dots \otimes a_n) = \sum_{l=1}^{n+1-r-s} \left(b_l \circ \bfrs \right)(m \otimes a_1 \otimes \dots \otimes a_n) \notag \\
 &= (-1)^{\maltese_{n-r+1}^n + d} \Bigl( \bfrs \circ \Bigl( \sum_{l=1}^{n+1-r-s} \sum_{i=s+1}^{n-r-l+1} b_{i,l} \Bigr) \Bigr)(m \otimes a_1 \otimes \dots \otimes a_n) \notag  \\
&+ (-1)^{*_{n-r}} \Bigl(\sum_{l=1}^{n+1-r-s} \sum_{i=1}^l (-1)^{*_{n-r+i-l}+d \cdot \maltese_{n-r+i-l+1}^{n-r}} \Bigr. \tag{$S_4$} \label{S4} \\
&\Bigl. \mu^N_{l-i,i-1}(a_{n-r+i-l+1},\dots,\frs(a_{n-r+1},\dots,m,\dots,a_s),\dots,a_{s+i-1})\otimes \dots \otimes a_{n-r+i-l} \Bigr) \ . \notag
\end{align}
We define $S_4$ as the respective part of the right-hand side which is labeled by ($S_4$). Modifying the indices of the sums in $S_4$, we further compute:
\begin{align*}
 &S_4 = (-1)^{*_{n-r}} \Bigl( \sum_{i=1}^{n+1-r-s} \sum_{l=i}^{n+1-r-s}  (-1)^{*_{n-r+i-l}+d \cdot \maltese_{n-r+i-l+1}^{n-r}} \Bigr. \\
&\qquad \left. \mu^N_{l-i,i-1}(a_{n-r+i-l+1},\dots,\frs(a_{n-r+1},\dots,m,\dots,a_s),\dots,a_{s+i-1})\otimes \dots \otimes a_{n-r+i-l} \right) \\
&= (-1)^{*_{n-r}} \Bigl(\sum_{i=0}^{n-r-s} \sum_{l=0}^{n-r-s-i}  (-1)^{*_{n-r-l}+d \cdot \maltese_{n-r-l+1}^{n-r}} \Bigr. \\
&\qquad \left. \mu^N_{l,i}(a_{n-r-l+1},\dots,a_{n-r},\frs(a_{n-r+1},\dots,m,\dots,a_s),a_{s+1},\dots,a_{s+i})\otimes \dots \otimes a_{n-r-l} \right) \ .
\end{align*}
Finally, we want to compute the map $b \circ f_*$. Writing $a := m \otimes a_1 \otimes \dots \otimes a_n$ leads to
\begin{align}
 &\left(b \circ f_* \right)(m \otimes a_1 \otimes \dots \otimes a_n)= \sum_{r,s \in \NN_0} (-1)^{*_{n-r}+d \cdot \deg(a)} \left( b \circ \bfrs \right)(a)  \notag \\
 &= \sum_{r,s \in \NN_0} (-1)^{*_{n-r}+\maltese_{n-r+1}^n + d \cdot (\deg(a)-1)} \left( \bfrs \circ \left( \sum_{l=1}^{n+1-r-s} \sum_{i=s+1}^{n+1-r-l} b_{i,l} \right) \right)(a) \tag{$S_5$} \label{S5} \\
 &\quad + \sum_{r,s \in \NN_0}  \sum_{i=0}^{n-r-s} \sum_{l=0}^{n-r-s-i}  (-1)^{d \cdot \deg(a)+*_{n-r-l}+d \cdot \maltese_{n-r-l+1}^{n-r}} \tag{$S_6$} \label{S6} \\
&\qquad \mu^N_{l,i}(a_{n-r-l+1},\dots,a_{n-r},\frs(a_{n-r+1},\dots,m,\dots,a_s),a_{s+1},\dots,a_{s+i})\otimes \dots \otimes a_{n-r-l} \ . \notag
\end{align}
We need to take a closer look at the sums $S_5$ and $S_6$ which we define in the same way as the sums $S_1$ to $S_4$. 
\bigskip 

Before we start, we introduce some additional notation. Define $\tfrs$ by 
\begin{equation*}
 \tfrs(x) := (-1)^{*_{n-r}+d\cdot \deg(x) } \bfrs
\end{equation*}
for every $r,s \in \NN_0$ and $x \in CH_*(A;M)$. In this notation, it holds that $f_* = \sum_{r,s \in \NN_0} \tfrs$ and the sum $S_5$ can be reformulated as follows:
\begin{align}
 S_5 &= \sum_{r,s \in \NN_0} (-1)^{*_{n-r}+\maltese_{n-r+1}^n + d \cdot (\deg(a)-1)} \left( \bfrs \circ \left( \sum_{l=1}^{n+1-r-s} \sum_{i=s+1}^{n+1-r-l} b_{i,l} \right) \right)(a) \notag \\
&= \sum_{r,s \in \NN_0} \sum_{l=1}^{n+1-r-s} \sum_{i=s+1}^{n+1-r-l} (-1)^{*_{n-r}(b_{i,l}(a))+d\cdot \deg(b_{i,l}(a))} \left(\bfrs \circ b_{i,l}\right)(a) \notag \\
&= \sum_{r,s \in \NN_0} \sum_{l=1}^{n+1-r-s} \sum_{i=s+1}^{n+1-r-l} \left(\tfrs \circ b_{i,l}\right)(a) \notag \\
&=\sum_{l=1}^{n+1} \sum_{r=0}^{n-l+1} \sum_{s=0}^{n-l+1-r}\sum_{i=s+1}^{n-l+1-r} \left(\tfrs \circ b_{i,l}\right)(a) =\sum_{l=1}^{n+1} \sum_{r=0}^{n-l+1} \sum_{i=0}^{n-l+1-r} \sum_{s=0}^{i-1} \left(\tfrs \circ b_{i,l}\right)(a) \notag \\
\Leftrightarrow S_5 &=\sum_{l=1}^{n+1} \sum_{i=0}^{n-l+1} \sum_{r=0}^{n-l+1-i} \sum_{s=0}^{i-1} \left(\tfrs \circ b_{i,l}\right)(a) \ . \label{S5schoener}
\end{align}
For the moment, we conclude our treatment of $S_5$ and turn our attention to $S_6$ which can be described more conveniently with respect to our purposes. Note that
\begin{align*}
 S_6 &= \sum_{r=0}^n \sum_{s=0}^{n-r} \sum_{i=0}^{n-r-s} \sum_{l=0}^{n-r-s-i} (-1)^{d \cdot \deg(a)+ *_{n-r-l}+d \cdot \maltese_{n-r-l+1}^{n-r}} \\
&\qquad \mu^N_{l,i}(a_{n-r-l+1},\dots,\frs(a_{n-r+1},\dots,m,\dots,a_s),\dots,a_{s+i})\otimes \dots \otimes a_{n-r-l} \\
&= \sum_{r=0}^n \sum_{s=0}^{n-r} \sum_{l=0}^{n-r-s} \sum_{i=0}^{n-r-s-l} \  \cdots \ = \sum_{r=0}^n \sum_{l=0}^{n-r} \sum_{s=0}^{n-r-l} \sum_{i=0}^{n-r-l-s} \  \cdots \\
&= \sum_{p=0}^n \sum_{r+l=p} \sum_{s=0}^{n-p} \sum_{i=0}^{n-p-l} (-1)^{d \cdot \deg(a)+ *_{n-p}+d \cdot \maltese_{n-p+1}^{n-r}} \\
&\qquad \mu^N_{l,i}(a_{n-p+1},\dots,\frs(a_{n-r+1},\dots,m,\dots,a_s),\dots,a_{s+i})\otimes \dots \otimes a_{n-p} \\
&= \sum_{p=0}^n \sum_{q=0}^{n-p} (-1)^{d \cdot \deg(a) + *_{n-p}} \\ 
&\left( \sum_{r+l=p}\sum_{s+i=q} (-1)^{d\cdot \maltese^{n-r}_{n-p+1} }\mu^N_{l,i}(a_{n-p+1},\dots,\frs(a_{n-r+1},\dots,m,\dots,a_s),\dots,a_{q}) \right) \\ 
&\qquad \qquad \qquad \otimes a_{q+1} \otimes \dots \otimes a_{n-p} \ .
\end{align*}
Since $f$ is a morphism of $A_\infty$-bimodules, it holds for fixed numbers $p$ and $q$ that
\begin{align*}
 &\sum_{r+l=p}\sum_{s+i=q} (-1)^{d\cdot \maltese^{n-r}_{n-p+1} }\mu^N_{l,i}(a_{n-p+1},\dots,a_{n-r},\frs(a_{n-r+1},\dots,m,\dots,a_s),a_{s+1},\dots,a_{q}) \\
 &= \sum_{r+l=p+1} \sum_{j=n-p+1}^{n-l+1} (-1)^{\maltese_{n-p+1}^{j-1}+d} f_{r,q}(a_{n-p+1},\dots,\mu_l(a_j,\dots,a_{j+l-1}),\dots,a_n,m,a_1,\dots,a_q) \\
 &+ \sum_{r+l=p} \sum_{s+i=q} (-1)^{\maltese^{n-l}_{n-p+1}+d} f_{r,s}(a_{n-p+1},\dots,a_{n-l},\mu^M_{l,i}(a_{n-l+1},\dots,m,\dots,a_i),a_{i+1},\dots,a_q) \\
 &+ \sum_{s+l=q+1} \sum_{j=1}^{q-l+1} (-1)^{\maltese_{n-p+1}^n + \maltese_0^{j-1}+d} f_{p,s}(a_{n-p+1},\dots,m,\dots,\mu_l(a_j,\dots,a_{j+l-1}),\dots,a_q) \ .
\end{align*}
Inserting this into our description of $S_6$, we obtain
\begin{align}
 S_6 &= \sum_{p=0}^n \sum_{q=0}^{n-p} \sum_{r+l=p+1} \sum_{j=n-p+1}^{n-l+1} (-1)^{*_{n-p} + \maltese_{n-p+1}^{j-1} +d(\deg(a)-1)} \tag{$S_7$} \label{S7} \\
      &\qquad \qquad f_{r,q}(a_{n-p+1},\dots,\mu_l(a_j,\dots,a_{j+l-1},\dots,m,\dots,a_q) \otimes a_{q+1}\otimes \dots \otimes a_{n-p} \notag \\
      &+ \sum_{p=0}^n \sum_{q=0}^{n-p}\sum_{r+l=p} \sum_{s+i=q}  (-1)^{*_{n-p}+ \maltese^{n-l}_{n-p+1}+d(\deg(a)-1)} \tag{$S_8$} \label{S8} \\
      &\qquad \qquad f_{r,s}(a_{n-p+1},\dots,\mu^M_{l,i}(a_{n-l+1},\dots,m,\dots,a_i),\dots,a_q)\otimes a_{q+1}\otimes \dots \otimes a_{n-p} \notag \\
      &+ \sum_{p=0}^n \sum_{q=0}^{n-p}  \sum_{s+l=q+1} \sum_{j=1}^{q-l+1} (-1)^{*_{n-p} + \maltese_{n-p+1}^n + \maltese_0^{j-1}+d(\deg(a)-1)} \tag{$S_9$} \label{S9} \\
      &\qquad \qquad f_{p,s}(a_{n-p+1},\dots,m,\dots,\mu_l(a_j,\dots,a_{j+l-1}),\dots,a_q)\otimes a_{q+1}\otimes \dots \otimes a_{n-p}\ . \notag
\end{align}
We define the sums $S_7$, $S_8$ and $S_9$ similar to the preceding ones $S_1$ to $S_4$. We want to simplify the three sums to get a simpler form for $S_6$ and eventually for the map $b \circ \bfrs $ that we were actually considering. 

We start by simplifying $S_8$. We relabel its indices for notational purposes.
\begin{align}
 S_8 = &\sum_{p=0}^n \sum_{q=0}^{n-p}\sum_{r+i=p} \sum_{s+l=q}  (-1)^{*_{n-p}+ \maltese^{n-l}_{n-p+1}+d(\deg(a)-1)}   \label{S9relabel} \\
      &f_{r,s}(a_{n-p+1},\dots,a_{n-i},\mu^M_{i,l}(a_{n-i+1},\dots,m,\dots,a_l),a_{l+1},\dots,a_q)\otimes a_{q+1}\otimes \dots \otimes a_{n-p} \notag
\end{align}
We compute in the $\tfrs$-notation that
\begin{align*}
 &\frs(a_{n-p+1},\dots,a_{n-i},\mu^M_{i,l}(a_{n-i+1},\dots,a_n,m,a_1,\dots,a_l),a_{l+1},\dots,a_{l+s})\otimes \dots \otimes a_{n-p} \\
  &= (-1)^{\maltese_{n-p+1}^{n-i}  \cdot \left( \|\mu^M_{i,l}(a_{n-i+1},\dots,a_n,m,a_1,\dots,a_l) \| + \maltese_{l+1}^{n-p}\right) + d\cdot (\deg(a)-1)} \\
 &\qquad \qquad \tfrs\left(\mu^M_{i,l}(a_{n-i+1},\dots,a_n,m,a_1,\dots,a_l)\otimes a_{l+1}\otimes \dots \otimes a_{n-i}\right) \\
 &= (-1)^{\maltese_{n-p+1}^{n-i} \left(\maltese_{n-i+1}^n + \maltese_0^{n-p}+1\right)+ d\cdot (\deg(a)-1)}  \\ &\qquad \qquad \tfrs\left(\mu^M_{i,l}(a_{n-i+1},\dots,a_n,m,a_1,\dots,a_l)\otimes a_{l+1}\otimes \dots \otimes a_{n-i}\right) \ .
\end{align*}
This last sign can be expressed in a simpler way. Modulo two, we derive that
\begin{align*}
 &\maltese_{n-p+1}^{n-i} \left(\maltese_{n-i+1}^n + \maltese_0^{n-p}+1\right) \equiv \maltese_{n-p+1}^{n-i} + \maltese_{0}^{n-p} \cdot \maltese_{n-p+1}^{n-i} + \maltese_{n-p+1}^{n-i}\cdot \maltese_{n-i+1}^n \\
 &\equiv \maltese_{n-p+1}^{n-i} + *_{n-p} + \maltese_0^{n-p}\cdot \maltese_{n-i+1}^n  + *_{n-i} + \maltese_{n-i+i}^n \cdot \maltese_0^{n-p} \equiv \maltese_{n-p+1}^{n-i} + *_{n-p} + *_{n-i} \ .
\end{align*}
Consequently,
\begin{align*}
 &\frs(a_{n-p+1},\dots,a_{n-i},\mu^M_{i,l}(a_{n-i+1},\dots,a_n,m,a_1,\dots,a_l),a_{l+1},\dots,a_{l+s})\otimes \dots \otimes a_{n-p} \\
 &= (-1)^{\maltese_{n-p+1}^{n-i} + *_{n-p} + *_{n-i}+ d\cdot (\deg(a)-1)} \tfrs\left(\mu^M_{i,l}(a_{n-i+1},\dots,m,\dots,a_l)\otimes a_{l+1}\otimes \dots \otimes a_{n-i}\right) \\
&= (-1)^{\maltese_{n-p+1}^{n-i}+*_{n-p}+ d\cdot (\deg(a)-1)} \left( \tfrs \circ b_{n-i+1,i+l+1} \right)(a) \ .
\end{align*}
Inserting this into (\ref{S9relabel}) leads to:
\begin{align*}
 S_8 &= \sum_{p=0}^n \sum_{q=0}^{n-p}\sum_{r+i=p} \sum_{s+l=q} \left(\tfrs \circ b_{n-i+1,i+l+1}\right)(a) = \sum_{r+i\leq n} \ \sum_{s+l \leq n-r-i} \left(\tfrs \circ b_{n-i+1,i+l+1}\right)(a) \\
&= \sum_{i=0}^n \sum_{l=0}^{n-i} \sum_{r=0}^{n-i-l} \sum_{s=0}^{n-r-i-l}\left(\tfrs \circ b_{n-i+1,i+l+1}\right)(a) \ .
\end{align*}

We perform two more index shifts to finally arrive at the formula we want to use:
\begin{align}
 S_8 &= \sum_{i=0}^n \sum_{l=i+1}^{n+1} \sum_{r=0}^{n-l+1} \sum_{s=0}^{n-l+1-r}\left(\tfrs \circ b_{n-i+1,l}\right)(a) \notag \\
  &= \sum_{i=0}^n \sum_{l=n-i+2}^{n+1} \sum_{r=0}^{n-l+1} \sum_{s=0}^{n-l+1-r}\left(\tfrs \circ b_{i,l}\right)(a)= \sum_{l=1}^{n+1} \sum_{i=n-l+2}^{n} \Bigl(\Bigl(\sum_{r+s\leq n-l+1}\tfrs \Bigr) \circ b_{i,l}\Bigr)(a) \notag \\
\Leftrightarrow S_8 &= \sum_{l=1}^{n+1} \sum_{i=n-l+2}^{n} \left(f_* \circ b_{i,l}\right)(a) \ , \label{S9neu} 
\end{align}
which is precisely the part of $f_* \circ b$ which involves the overlapping part of the Hoch\-schild differential. 

We next consider the other two sums $S_7$ and $S_9$, starting with $S_7$. In the new notation it follows that for every $j \in \{n-p+1,n-p+2,\dots,n-l+1\}$ we have
\begin{align*}
 &f_{r,q}(a_{n-p+1},\dots,a_{j-1},\mu_l(a_j,\dots,a_{j+l-1}),\dots,a_n,m,a_1,\dots,a_q)\otimes a_{q+1} \otimes \dots \otimes a_{n-p} \\
 &= (-1)^{\maltese_0^{n-p} \cdot \left(\maltese_{n-p+1}^{j-1} + \| \mu_l(a_j,\dots,a_{j+l-1}) \| + \maltese_{j+l}^n  \right)+d \cdot (\deg(a)-1)} \\ 
&\qquad \tilde{f}_{r,q}(m\otimes a_1 \otimes \dots \otimes a_{j-1} \otimes \mu_l(a_j,\dots,a_{j+l-1})\otimes a_{j+l}\otimes \dots \otimes a_n) \\
 &= (-1)^{\maltese_0^{n-p} \cdot \left(\maltese_{n-p+1}^{n} +1\right)+d \cdot (\deg(a)-1)} \\ 
&\qquad \tilde{f}_{r,q}(m\otimes a_1 \otimes \dots \otimes a_{j-1} \otimes \mu_l(a_j,\dots,a_{j+l-1})\otimes a_{j+l}\otimes \dots \otimes a_n) \\
 &= (-1)^{*_{n-p} + \maltese_0^{n-p}+ d \cdot (\deg(a)-1)} \\
&\qquad \tilde{f}_{r,q}(m \otimes a_1 \otimes \dots \otimes a_{j-1} \otimes \mu_l(a_j,\dots,a_{j+l-1})\otimes a_{j+l}\otimes \dots \otimes a_n) \ .
\end{align*}
So $S_7$ becomes
\begin{align*}
 S_7 &= \sum_{p=0}^n \sum_{q=0}^{n-p} \sum_{r+l=p+1} \sum_{j=n-p+1}^{n-l+1}  (-1)^{\maltese_0^{n-p}+ \maltese_{n-p+1}^{j-1}} \\
&\qquad \tilde{f}_{r,q}(m \otimes a_1 \otimes \dots \otimes a_{j-1} \otimes \mu_l(a_j,\dots,a_{j+l-1})\otimes a_{j+l}\otimes \dots \otimes a_n) \\
&=  \sum_{p=0}^n \sum_{q=0}^{n-p} \sum_{r+l=p+1} \sum_{j=n-p+1}^{n-l+1} \tilde{f}_{r,q}\left((-1)^{\maltese_0^{j-1}} m \otimes a_1 \otimes \dots \otimes \mu_l(a_j,\dots,a_{j+l-1})\otimes \dots \otimes a_n \right) \\
&= \sum_{p=0}^n \sum_{q=0}^{n-p} \sum_{r+l=p+1} \sum_{j=n-p+1}^{n-l+1} \left(\tilde{f}_{r,q} \circ b_{j,l}\right)(a) \ .
\end{align*}
We relabel the indices for notational purposes and reorder them in a convenient way.
\begin{align}
 S_7 &= \sum_{p=0}^n \sum_{s=0}^{n-p} \sum_{r+l=p+1} \sum_{i=n-p+1}^{n-l+1} \left( \tfrs \circ b_{i,l}\right)(a)= \sum_{p=0}^n \sum_{r+l=p+1} \sum_{i=n-p+1}^{n-l+1}\sum_{s=0}^{n-p+1} \left( \tfrs \circ b_{i,l}\right)(a)  \notag \\
= &\sum_{r+l \leq n+1} \sum_{i=n-r-l+2}^{n-l+1} \sum_{s=0}^{n-r-l+1}\left( \tfrs \circ b_{i,l}\right)(a) = \sum_{l=1}^{n+1} \sum_{r=0}^{n+1-l}\sum_{i=n-r-l+2}^{n-l+1} \sum_{s=0}^{n-r-l+1}\left( \tfrs \circ b_{i,l}\right)(a) \notag \\
\Leftrightarrow S_7 &= \sum_{l=1}^{n+1} \sum_{i=1}^{n-l+1} \sum_{r=n-i-l+2}^{n+1-l}\sum_{s=0}^{n-r-l+1}\left( \tfrs \circ b_{i,l}\right)(a) \ .\label{S8neu} 
\end{align}
This concludes our treatment of $S_7$ for the moment and we continue by considering $S_9$. We start again by relabeling the indices as follows:
\begin{align}
 S_9 &= \sum_{r=0}^n \sum_{q=0}^{n-r} \sum_{s+l = q+1} \sum_{i=1}^{q-l+1} (-1)^{*_{n-r}+\maltese_{n-r+1}^n + \maltese_0^{i-1} + d \cdot (\deg(a)-1)} \label{S7relabel} \\
      &\qquad \frs(a_{n-r+1},\dots,a_n,m,a_1,\dots,\mu_l(a_i,\dots,a_{i+l-1}),\dots,a_q)\otimes a_{q+1}\otimes \dots \otimes a_{n-r} \ . \notag
\end{align}
One checks that
\begin{align*}
 &\frs(a_{n-r+1},\dots,a_n,m,a_1,\dots,a_{i-1},\mu_l(a_i,\dots,a_{i+l-1}),a_{i+l},\dots,a_q)\otimes a_{q+1}\otimes \dots \otimes a_{n-r} \\
 &= (-1)^{\maltese_{n-r+1}^n \cdot \left(\maltese_0^{i-1} + \|\mu_l(a_i,\dots,a_{i+l-1}) \| + \maltese_{q}^{n-r} \right)+d(\deg(a)-1)} \\
 &\qquad \tfrs(m \otimes a_1 \otimes \dots \otimes \mu_l(a_i,\dots,a_{i+l-1})\otimes a_{i+l}\otimes \dots \otimes a_n ) \\
 &= (-1)^{\maltese_{n-r+1}^n \cdot \left(\maltese_0^{n-r} + 1 \right)+d(\deg(a)-1)} \\
 &\qquad \tfrs(m \otimes a_1 \otimes \dots \otimes \mu_l(a_i,\dots,a_{i+l-1})\otimes \dots \otimes a_n ) \\
 &= (-1)^{*_{n-r}+\maltese_{n-r+1}^n +d(\deg(a)-1)}\tfrs(m \otimes a_1 \otimes \dots \otimes \mu_l(a_i,\dots,a_{i+l-1})\otimes \dots \otimes a_n ) \ .
\end{align*}
Inserting this into (\ref{S7relabel}) gives us
\begin{align}
 S_9 &= \sum_{r=0}^n \sum_{q=0}^{n-r} \sum_{s+l = q+1} \sum_{i=1}^{q-l+1} (-1)^{\maltese_0^{i-1}} \tfrs(m \otimes a_1 \otimes \dots \otimes \mu_l(a_i,\dots,a_{i+l-1})\otimes \dots \otimes a_n ) \notag \\
   &= \sum_{r=0}^n \sum_{q=0}^{n-r} \sum_{s+l = q+1} \sum_{i=1}^{q-l+1} \tfrs \left((-1)^{\maltese_0^{i-1}} m \otimes a_1 \otimes \dots \otimes \mu_l(a_i,\dots,a_{i+l-1})\otimes \dots \otimes a_n \right) \notag \\
   &= \sum_{r=0}^n \sum_{s+l \leq n+1-r} \sum_{i=0}^s \left(\tfrs \circ b_{i,l}\right)(a)= \sum_{r=0}^n \sum_{l=1}^{n+1-r} \sum_{s=0}^{n-l+1-r}\sum_{i=0}^s \left(\tfrs \circ b_{i,l}\right)(a) \notag \\
\Leftrightarrow S_9  &= \sum_{l=1}^{n+1} \sum_{i=0}^{n-l+1} \sum_{r=0}^{n-l+1-i}\sum_{s=i}^{n-l+1-r} \left(\tfrs \circ b_{i,l}\right)(a) \ .\label{S9schoener}
\end{align}
Comparing equations (\ref{S5schoener}) and (\ref{S9schoener}), one sees that
\begin{equation}
\label{summeS5S9}
 S_5+S_9 = \sum_{l=1}^{n+1} \sum_{i=0}^{n-l+1} \sum_{r=0}^{n-l+1-i}\sum_{s=0}^{n-l+1-r} \left(\tfrs \circ b_{i,l}\right)(a) \ .
\end{equation}
Equations (\ref{S8neu}) and (\ref{summeS5S9}) in turn yield:
\begin{align*}
 S_5+S_7+S_9 &= \sum_{l=1}^{n+1} \sum_{i=0}^{n-l+1} \sum_{r=0}^{n-l+1} \sum_{s=0}^{n-l+1-r} \left(\tfrs \circ b_{i,l}\right)(a) \\
&= \sum_{l=1}^{n+1} \sum_{i=0}^{n-l+1} \left(\left(\sum_{r+s\leq n-l+1}\tfrs\right) \circ b_{i,l}\right)(a) = \sum_{l=1}^{n+1} \sum_{i=0}^{n-l+1} \left(f_* \circ b_{i,l}\right)(a) \ .
\end{align*}
Finally, bringing this last result together with (\ref{S9neu}), we obtain:
\begin{align*}
 \left(b \circ f_*\right)(a)&= S_5+S_6 =S_5+S_7+S_8+S_9 \\
 &= \sum_{l=1}^{n+1} \sum_{i=0}^{n-l+1} \left(f_* \circ b_{i,l}\right)(a) + \sum_{l=1}^{n+1} \sum_{i=n-l+2}^{n} \left(f_* \circ b_{i,l}\right)(a) = \left(f_* \circ b\right)(a) \ ,
\end{align*}
which was to be shown.
\end{proof}

\begin{remark} \index{morphism of $A_\infty$-bimodules!composition}
 If $f: M \to N$ and $g: N \to P$ are morphisms of $A_\infty$-bimodules over $A$, one can define the composition of $f$ and $g$ as a morphism of $A_\infty$-bimodules $g\circ f: M \to P$ by requiring that
\begin{equation*}
 (g \circ f)_* =  g_* \circ f_*: CH_*(A;M) \to CH_*(A;P) \ .
\end{equation*}
We refrain from writing down the defining equations for $f \circ g$ explicitly.
\end{remark}

We conclude this section by discussing a simple criterion which guarantees that a morphism of $A_\infty$-bimodules induces an isomorphism in Hochschild homology if the ring is given by the integers. 

\bigskip 

Let $f: M\to N$ be a morphism of $A_\infty$-bimodules over $A$. The defining equation for morphisms of $A_\infty$-bimodules for $r=0$ and $s=0$ is equivalent to
\begin{equation*}
 f_{0,0} \circ \mu^M_{0,0} = \mu^N_{0,0} \circ f_{0,0} \ .
\end{equation*}

We have mentioned right after Definition \ref{DefAinftybimodule} that the map $\mu^M_1 = \mu^M_{0,0}$ is a differential of degree $+1$ for every $M$, so if we put $f_1:= f_{0,0}$, the map $f_1:M\to N$ will commute with the differentials $\mu^M_1$ and $\mu^N_1$:
\begin{equation*}
 f_1 \circ \mu^M_1 = \mu^N_1 \circ f_1 \ .
\end{equation*}

The following statement will be very helpful for our computations since it greatly simplifies many proofs concerning maps between Hochschild homology groups by reducing the argument to a much simpler case.

\begin{theorem}[\cite{GetzlerJones}, Lemma 5.3]
\label{firstpageisenough}
 Let $A$ be an $A_\infty$-algebra over $\ZZ$ and let $M$ and $N$ be $A_\infty$-bimodules over $A$, which are both torsion-free abelian groups. Let $f: M \to N$ be a morphism of $A_\infty$-bimodules over $A$, $f_1:= f_{0,0}$. If the induced map
\begin{equation*}
 (f_1)_* : H^*(M,\mu^M_1) \to H^*(N,\mu^N_1)
\end{equation*}
is an isomorphism of graded $R$-modules, then the map induced by $f_*$ on homology level,
\begin{equation*}
 [f_*]: HH_*(A;M) \to HH_*(A;N) \ ,
\end{equation*}
 will be an isomorphism of graded abelian groups.
\end{theorem}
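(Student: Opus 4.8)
The plan is to compare, via $f_*$, the spectral sequences of the length filtrations on $CH_*(A;M)$ and $CH_*(A;N)$. By Theorem \ref{inducedHochschildchainmap} the map $f_*$ is a chain map, and by construction each component $\bfrs$ lowers the tensor length by $r+s$; hence $f_*$ preserves the increasing length filtration $F_p = \bigoplus_{n \leq p} M \otimes A^{\otimes n}$ (with its analogue for $N$) and induces a morphism between the associated spectral sequences $E^\bullet(M) \Rightarrow HH_*(A;M)$ and $E^\bullet(N) \Rightarrow HH_*(A;N)$ constructed in Section \ref{SectionLengthFiltration}. I would first verify this filtration statement and then reduce the theorem to an isomorphism on a single page.

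The core step is the first page. On the associated graded the only surviving part of the Hochschild differential is the length-preserving $l=1$ term, which on $M \otimes A^{\otimes p}$ is precisely the total tensor-product differential assembled from $(M,\mu^M_1)$ and $p$ copies of $(A[1],\mu_1)$; correspondingly $f_*$ induces $f_1 \otimes \id^{\otimes p}$ (the Koszul signs attached to the identity factors being irrelevant). I would then argue that $f_1 \otimes \id^{\otimes p}$ is a quasi-isomorphism: since $M$ and $N$ are torsion-free they are flat over $\ZZ$, so $\mathrm{Cone}(f_1)$ is an acyclic complex of flat abelian groups, and $\mathrm{Cone}(f_1 \otimes \id^{\otimes p}) \cong \mathrm{Cone}(f_1) \otimes A^{\otimes p}$ is acyclic by the Künneth theorem over $\ZZ$ (its $\Tor$-terms being admissible because $\mathrm{Cone}(f_1)$ is torsion-free, and vanishing because $H(\mathrm{Cone}(f_1)) = 0$). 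Passing to homology, the induced map on the first page is therefore an isomorphism; this is exactly the point at which the hypothesis that $(f_1)_*$ is an isomorphism enters, and it agrees with the concise description of $E^1$ (namely $H(M,\mu^M_1) \otimes H(A,\mu_1)^{\otimes p}$ carrying the map $(f_1)_* \otimes \id^{\otimes p}$) recorded in Section \ref{SectionLengthFiltration}.

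Once an isomorphism on $E^1$ is established, the comparison theorem for spectral sequences propagates it to every later page and to $E^\infty$, and together with convergence this yields that $[f_*]$ is an isomorphism of the abutments $HH_*(A;M) \to HH_*(A;N)$.

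The step I expect to be the main obstacle is convergence. The length filtration is exhaustive and bounded below, but it need not be finite in a fixed total degree — when $A$ is unbounded, tensors of arbitrary length can contribute to a single Hochschild degree — so the passage from an $E^\infty$-isomorphism to an isomorphism of abutments is not formal and must rest on the convergence properties of this spectral sequence proved in Section \ref{SectionLengthFiltration}. I would therefore invoke that convergence result explicitly. Equivalently, the whole argument can be phrased through the mapping cone $\mathrm{Cone}(f_*)$: its associated graded is acyclic by the computation above, and acyclicity of the total cone — hence that $f_*$ is a quasi-isomorphism — follows from the same convergence statement.
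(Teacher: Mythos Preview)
Your proposal is correct and follows essentially the same route as the paper: show that $f_*$ respects the length filtration, identify the induced map on the $E^0$-page with $f_{0,0}\otimes\id_{A^{\otimes p}}$, use a K\"unneth argument over $\ZZ$ (you phrase it via the mapping cone, the paper via the K\"unneth short exact sequence and the five-lemma) to get an isomorphism on $E^1$, and then invoke the comparison theorem together with the convergence results of Section~\ref{SectionLengthFiltration}. One small slip: what Section~\ref{SectionLengthFiltration} actually records is $E^1_{p,*}\cong H\bigl(M\otimes A^{\otimes p},b_1\bigr)$, not $H(M)\otimes H(A)^{\otimes p}$; your cone argument is the right way around this and does not rely on that tensor splitting.
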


The proof of this theorem makes use of spectral sequence techniques and is deferred to Section \ref{SectionLengthFiltration}. There, we will construct filtrations on Hochschild chain complexes and investigate the associated spectral sequences. \index{spectral sequence of the length filtration}

Another proof of Theorem \ref{firstpageisenough} using minimal models of $A_\infty$-algebras is given in \cite[Section 2.2]{TZS}.

\begin{remark}
 The theorem can be generalized to arbitrary coefficient rings if one replaces the condition that $A$ and $M$ are torsion-free by a more sophisticated flatness condition. More precisely, the proof makes use of the K\"unneth formulas for the pairs of complexes $(M,A^{\otimes p})$ and $(N,A^{\otimes p})$, where $p \in \NN$, which only hold under certain algebraic assumptions. 
\end{remark}

\section{\texorpdfstring{Hochschild cohomology of $A_\infty$-algebras}{Hochschild cohomology of A-infinity-algebras}}
\label{SectionHochschildCohomology}

We are going to define the dual notion of Hochschild homology, the Hochschild cohomology of $A_\infty$-algebras with coefficients in $A_\infty$-bimodules over them. Hochschild cohomology for $A_\infty$-algebras has been considered by a number of authors in different contexts, for example in \cite{Markl} and \cite{TradlerBV}. It is a straighforward generalization of the Hochschild cohomology of a differential graded algebra, as it is for example presented in \cite[Chapter 9]{Weibel} and \cite[Section 1]{AbbaspourSurvey}. \index{Hochschild!cohomology}

We give an elementary construction of the Hochschild cochain complex, as in \cite[Section 2e]{SeidelAinftyneu} and \cite[Section 7.1]{KontsevichSoibelmanNotes}. Since its relation and especially its duality to the Hochschild chain complex is not that obvious, we afterwards discuss certain duality aspects of this complex in Remark \ref{CohomBimodules} and Theorem \ref{HochschildDuality}.
\bigskip

\textit{Throughout this section let $(A,(\mu_n)_{n\in \NN})$ be an $A_\infty$-algebra over $R$ and $(M,(\mu^M_{r,s})_{r,s\in\NN_0})$ be an $A_\infty$-bimodule over $A$.} 

\bigskip 

For any $n \in \NN_0$ let $\Hom^*(A^{\otimes n},M)$ denote the $R$-module of graded $R$-module homomorphisms from $A^{\otimes n}$ to $M$. Here, $A^{\otimes n}$ is equipped with the usual tensor product grading, i.e. for every $j\in \ZZ$:
\begin{equation*}
 \left(A^{\otimes n} \right)_j = \bigoplus_{j = j_1+\dots+j_n} A_{j_1}\otimes A_{j_2}\otimes \dots \otimes A_{j_n} \ .
\end{equation*}
For $k \in \ZZ$ we denote by $\Hom^k(A^{\otimes n},M) \subset \Hom^*(A^{\otimes n},M)$ the set of all homomorphisms of degree $k$. This number $k$ is called the \emph{internal degree} and we will write $\mu_M(f)=k$ if $f\in \Hom^k(A^{\otimes n},M)$. \\

Consider the $R$-module \index{Hochschild!cochain complex} \nomenclature{$CH^*(A;M)$}{Hochschild cochain complex of $A$ with coefficients in $M$}
\begin{equation*}
 CH^*(A;M) := \prod_{n \in \NN_0} \Hom^*(A^{\otimes n},M) \ .
\end{equation*}
An integer grading on $CH^*(A;M)$ is given by putting 
\begin{equation*}
 CH^j(A;M) := \prod_{n \in \NN_0} \Hom^{j-n}(A^{\otimes n},M) 
\end{equation*}
for all $j \in \ZZ$. We will write 
\begin{equation*}
\deg f := j
\end{equation*}
iff $f \in CH^*(A;M)$ lies in $CH^j(A;M)$. A differential on $CH^*(A;M)$ is given by the $R$-module homomorphism \index{Hochschild!codifferential} \nomenclature{$\beta$}{Hochschild codifferential}
\begin{equation*}
 \beta: CH^*(A;M) \to CH^*(A;M)
\end{equation*}
which is defined in the following way: If $f \in \Hom(A^{\otimes n},M)$ for some $n \in \NN_0$, then 
\begin{equation*}
\beta(f) \in \prod_{l \in \NN_0}  \Hom(A^{\otimes (n+l)},M) \ ,
\end{equation*}
where for fixed $l \in \NN_0$, the $\Hom(A^{\otimes (n+l)},M)$-component of $\beta(f)$, denoted by $(\beta(f))_l$, is for every $a_1,\dots,a_{n+l} \in A$ given by
\begin{align*}
 &(\beta(f))_l(a_1,\dots,a_{n+l}) := \sum_{i=1}^n (-1)^{\maltese_1^{i-1}} f(a_1,\dots,a_{i-1},\mu_{l+1}(a_i,\dots,a_{i+l}),a_{i+l+1},\dots,a_{n+l}) \\
    &\quad + \sum_{i=1}^{l+1} (-1)^{\deg f \cdot \left(\maltese_{1}^{i-1}+1 \right)+1} \mu^M_{i-1,l-i+1}(a_1,\dots,a_{i-1},f(a_i,\dots,a_{n+i-1}),a_{n+i},\dots,a_{n+l}) \ .
\end{align*}
We define $\beta$ on general elements of $CH^*(A;M)$ by extending the previous definition $R$-linearly. 

\begin{remark}
 One checks that for any $f \in \Hom^*(A^{\otimes n},M)$, $n \in \NN_0$, the internal degree of $f$ and its degree as a Hochschild cochain are related by 
 \begin{equation}
 \label{EqDegreeInternalCochains}
 \deg f = \mu_M(f)+n \ .
 \end{equation}
\end{remark}

The proof of the following statement can be given in the spirit of the computations in the previous sections. Therefore we omit it. 
\begin{theorem}
 The map $\beta: CH^*(A;M) \to CH^*(A;M)$ is a differential, i.e. $\beta \circ \beta = 0$, of degree $+1$.
\end{theorem}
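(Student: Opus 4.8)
The plan is to mirror the structure of the proof of Theorem~\ref{HochschildDifferential}, since $\beta$ is the cochain analogue of the Hochschild differential $b$. First I would dispatch the degree claim: using $\deg\mu_n = 2-n$, $\deg\mu^M_{r,s}=1-r-s$ and the relation \eqref{EqDegreeInternalCochains} between internal degree and cochain degree, a direct check shows that each of the two sums defining $(\beta(f))_l$ raises the cochain degree by exactly one, so $\beta$ has degree $+1$.

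For $\beta\circ\beta=0$ I would fix $f\in\Hom^*(A^{\otimes n},M)$ and expand $\beta(\beta(f))$ as a double sum, classifying the summands according to how the two operations applied interact. Writing $\beta(f)$ as the sum of its \emph{internal} terms (those of the form $f(\dots,\mu_{l+1}(\dots),\dots)$, in which $f$ remains outermost) and its \emph{external} terms (those of the form $\mu^M_{i-1,l-i+1}(\dots,f(\dots),\dots)$, in which $f$ is wrapped by a bimodule operation), applying $\beta$ a second time produces families of composites exactly analogous to the sums $A_1,\dots,A_4$ that arose for $b\circ b$.

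The internal--internal family splits into the case of two disjoint $\mu$-insertions into the arguments of $f$, which cancel in pairs once the two insertion indices are interchanged (the analogue of $A_{11}+A_{13}=0$), and the case of one $\mu_{l_1}$ nested inside another $\mu_{l_2}$, whose sum over the relevant indices is precisely the left-hand side of a defining equation of the $A_\infty$-algebra $A$ and hence vanishes (the analogue of $A_{12}=0$). The external--external family, together with the mixed terms in which a $\mu_{l+1}$ is inserted into the arguments of the outer $\mu^M$, reorganizes into the left-hand side of a defining equation of the $A_\infty$-bimodule $M$ and hence vanishes as well; the remaining mixed terms, namely an internal insertion landing inside $f$'s own argument block followed by an external wrapping, versus the external wrapping followed by that same insertion, produce identical expressions with opposite signs and cancel against one another after an index shift.

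The main obstacle is, as in the earlier proofs, purely the sign arithmetic: one must verify modulo two that the Koszul signs $(-1)^{\maltese_1^{i-1}}$ and the $\deg f$-dependent signs $(-1)^{\deg f\cdot(\maltese_1^{i-1}+1)+1}$ combine, after each index shift, to reproduce exactly the signs appearing in the $A_\infty$-algebra and $A_\infty$-bimodule defining equations. I would isolate these verifications in a sign lemma analogous to Lemma~\ref{SignsHochschildProof}, so that the combinatorial cancellations above can be quoted cleanly; everything else is a bookkeeping of index ranges identical in spirit to the treatment of $A_{21},A_{23},A_4$ in the proof of Theorem~\ref{HochschildDifferential}.
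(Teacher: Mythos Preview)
Your proposal is correct and is precisely the approach the paper itself indicates: the paper does not actually give a proof of this theorem, stating only that ``the proof of the following statement can be given in the spirit of the computations in the previous sections'' and omitting it. Your outline---splitting $\beta$ into internal and external parts, handling the internal--internal terms via the $A_\infty$-algebra relations and pairwise cancellation, the external--external and outer mixed terms via the $A_\infty$-bimodule relations, and cancelling the remaining mixed terms against each other---is exactly the cochain analogue of the decomposition used for $b\circ b=0$ in Theorem~\ref{HochschildDifferential}, which is what the paper intends.
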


This observation leads to the following mildly surprising definition.

\begin{definition} \index{Hochschild!cohomology} \nomenclature{$HH^*(A;M)$}{Hochschild homology of $A$ with coefficients in $M$}
 The complex $(CH^*(A;M),\beta)$ is called the \emph{Hochschild cochain complex of $A$ with coefficients in $M$}. Its cohomology is called the \emph{Hochschild cohomology of $A$ with coefficients in $M$}. We denote it by
 \begin{equation*}
  HH^*(A;M) := H^*(CH^*(A;M),\beta) \ .
 \end{equation*}
\end{definition}

The definition of Hochschild cohomology may look surprising to the reader, since it is not at all obvious if Hochschild homology and cohomology of some $A$ and $M$ are related and in some sense dual to each other. Moreover, the signs appearing in the Hochschild differential are slightly different to those used in the definition of Hochschild homology. We will clarify both aspects in the next theorem which requires some basic algebraic preparations. 

\bigskip

 The following remark and the subsequent theorem might clarify the relationship between Hochschild homology and cohomology.

 \begin{remark}
 \label{CohomBimodules}
  Let $f = \sum_{n \in \NN_0} f_n \in CH^d(A;M)$ for some $d \in \ZZ$ with $f_n \in \Hom(A^{\otimes n},M)$ for every $n \in \NN_0$. For all $r,s \in \NN_0$ we define a map
  \begin{align*}
   f_{r,s}: A^{\otimes r} \otimes A[1] \otimes A^{\otimes s} &\to M \ ,  \\
       a_1\otimes \dots \otimes a_r \otimes a_0 \otimes a_{r+1} \otimes \dots \otimes a_{r+s} &\mapsto f_{r+s+1}(a_1,\dots,a_r,a_0,a_{r+1},\dots,a_{r+s}) \ .
  \end{align*}
 One checks from the defining equations of $A_\infty$-bimodules that $\beta(f)=0$ if and only if the family $(f_{r,s})_{r,s \in \NN_0}$ is a morphism of $A_\infty$-bimodules over $A$ from $A[1]$ to $M$ of degree $d$. In other words, a Hochschild cocyle of a certain degree with coefficients in $M$ can be seen as a morphism of $A_\infty$-bimodules from $A[1]$ to $M$ of the same degree. 
 \end{remark}

For every $n \in \NN_0$ there is a canonical isomorphism of $R$-modules
\begin{equation*}
 \varphi_n : \Hom(M\otimes A^{\otimes n},R) \stackrel{\cong}{\to} \Hom( A^{\otimes n},M^*)
\end{equation*}
given by the property that for every $a_1,\dots,a_n \in A$ it holds that
\begin{equation*}
 \varphi_n(f)(a_1,\dots,a_n) = \left(m \mapsto f(m,a_1,\dots,a_n) \right) \ .
\end{equation*}
Identifying 
\begin{equation*}
 \Hom(CH_*(A;M),R) = \Hom  \Bigl( \bigoplus_{n \in \NN_0} M\otimes A^{\otimes n},R \Bigr) \cong \prod_{n \in \NN_0} \Hom \left(M\otimes A^{\otimes n},R \right)  ,
\end{equation*}
we define an isomorphism of graded $R$-modules by
\begin{align*}
 &\varphi:= \varphi_M: \Hom(CH_*(A;M),R) \to CH^*(A;M^{-*}) \ , \\
 &\left(\varphi(f)(a_1,\dots,a_n)\right)(m) := (-1)^{\mu_M(m)\cdot\maltese_1^n}  \left(\varphi_n(f)(a_1,\dots,a_n)\right)(m) \ ,
\end{align*}
for all $n \in \NN_0$, $f \in \Hom(M\otimes A^{\otimes n},R)$, $a_1,a_2,\dots,a_n \in A$ and $m \in M$, where $M^{-*}$ denotes the dual bimodule $M^*$ with inverted grading and equipped with the dual $A_\infty$-bimodule structure from Theorem \ref{DualBimodule}.

\begin{theorem} \index{Hochschild!cohomology!relation to Hochschild homology}
\label{HochschildDuality}
\label{HochschildCoVarphi}
 $\varphi$ is an isomorphism of cochain complexes of degree zero with respect to the Hochschild codifferential $\beta$ and the pull-back by the Hochschild differential $b^*$ defined on 
 \begin{equation*}
 \Hom(CH_*(A;M),R)
 \end{equation*} 
 equipped with the obvious grading. In other words, the following diagram commutes:
 \begin{equation*}
  \begin{CD}
   \Hom(CH_*(A;M),R) @>{b^*}>> \Hom(CH_*(A;M),R) \\
    @V{\varphi}V{\cong}V   @V{\varphi}V{\cong}V  \\
   CH^*(A;M^{-*}) @>{\beta}>> CH^*(A;M^{-*}) \ .
  \end{CD}
 \end{equation*}
\end{theorem}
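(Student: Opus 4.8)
The plan is to verify the commutativity $\varphi \circ b^* = \beta \circ \varphi$ directly, by evaluating both composites on an arbitrary homogeneous cochain and comparing the resulting expressions term by term. By $R$-linearity and the product decomposition $\Hom(CH_*(A;M),R) \cong \prod_n \Hom(M \otimes A^{\otimes n}, R)$, it suffices to treat a single $f \in \Hom(M \otimes A^{\otimes n}, R)$ of pure internal degree and to check the identity on each component $\Hom(A^{\otimes(n+l)}, M^{-*})$, $l \in \NN_0$.

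First I would unwind $b^*(f) = f \circ b$. Since $f$ is supported on $M \otimes A^{\otimes n}$ and the piece of $b$ lowering the number of tensor factors by $l$ sends $M \otimes A^{\otimes(n+l)}$ into $M \otimes A^{\otimes n}$, only the components $\mu^{n+l}_{i,\,l+1}$ contribute. I would split these into the three types occurring in the definition of the Hochschild differential: (ii) the interior terms $1 \le i \le n$, which apply $\mu_{l+1}$ to a block of the $a_q$ without touching $m$; (i) the term $i=0$, the left action $\mu^M_{0,l}(m,a_1,\dots,a_l)$; and (iii) the overlapping terms $n+1 \le i \le n+l$, carrying the sign $*_{i-1}$ and applying $\mu^M_{n+l-i+1,\,i-n-1}$ to the tuple $(a_i,\dots,a_{n+l},m,a_1,\dots,a_{i-n-1})$ obtained by cycling the tail past $m$.

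Applying $\varphi_{n+l}$ together with its twisting sign $(-1)^{\mu_M(m)\maltese_1^{n+l}}$, I would then compare with the two sums defining $\beta(\varphi(f))$ on $A^{\otimes(n+l)}$. The interior contribution (ii) matches the first sum $\sum_{i=1}^n (-1)^{\maltese_1^{i-1}} \varphi(f)(a_1,\dots,\mu_{l+1}(a_i,\dots,a_{i+l}),\dots,a_{n+l})$: here the two twisting signs of $\varphi$ and the sign $(-1)^{\maltese_0^{i-1}}$ of $b$ combine, using $\|\mu_{l+1}(a_i,\dots,a_{i+l})\| \equiv \maltese_i^{i+l}+1$ and $2\mu_M(m)\maltese_1^{n+l} \equiv 0$, to leave exactly $(-1)^{\maltese_1^{i-1}}$, so this case is immediate. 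The remaining terms (i) and (iii) must reproduce the second sum $\sum_{i=1}^{l+1}(-1)^{\deg\varphi(f)\cdot(\maltese_1^{i-1}+1)+1}\,\mu^*_{i-1,\,l-i+1}(a_1,\dots,\varphi(f)(a_i,\dots,a_{n+i-1}),\dots,a_{n+l})$, where $\mu^*_{r,s}$ is the dual $A_\infty$-bimodule structure of Theorem \ref{DualBimodule}. The correspondence is that the definition of $\mu^*_{r,s}$, as $(-1)^{\ddag_{r,s}}$ times the value of $m^*$ on the rotated tuple $\mu^M_{s,r}(a_{r+1},\dots,m,\dots,a_r)$, mirrors the wrapping of the overlapping part of $b$: the term $i=0$ of $b$ (i.e.\ $\mu^M_{0,l}$) corresponds to $\mu^*_{l,0}$, which is the $i=l+1$ summand of $\beta$, while the overlapping terms $i=n+1,\dots,n+l$ correspond to $i=1,\dots,l$ via the reindexing $i\mapsto i-n$.

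The main obstacle is the sign bookkeeping for the bimodule terms (i) and (iii). One has to show that, modulo $2$, the exponent $\ddag_{r,s}$ from the dual structure, the overlapping exponent $*_{i-1}=\maltese_0^{i-1}\maltese_i^{n+l}$ of $b$, and the twisting exponent $\mu_M(m)\maltese_1^{n+l}$ of $\varphi$ add up to the exponent $\deg\varphi(f)\cdot(\maltese_1^{i-1}+1)+1$ appearing in $\beta$, where the cochain degree $\deg\varphi(f)$ and the internal degree of $\varphi_n(f)$ are related by \eqref{EqDegreeInternalCochains}. The delicate point is that the rotation built into the definition of $\mu^*$ and the cyclic wrapping of the overlapping part of $b$ permute the arguments differently, so the Koszul signs these permutations generate must be reconciled against the twist of $\varphi$; this is done by identities entirely similar to those in the sign computation \eqref{SignsDualBimod} in the proof of Theorem \ref{DualBimodule}. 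I would organize the argument by treating the left action $i=0$ separately and then handling the overlapping range $n+1 \le i \le n+l$ uniformly after the single reindexing $i\mapsto i-n$.
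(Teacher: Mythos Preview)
Your proposal is correct and follows essentially the same route as the paper's own proof: reduce to a single $f \in \Hom(M\otimes A^{\otimes n},R)$, expand $(\varphi(b^*f))(a_1,\dots,a_{n+l})(m)$ by splitting the Hochschild differential into interior terms ($1\le i\le n$), the $i=0$ bimodule term, and the overlapping range, then match these against the two sums in $\beta(\varphi(f))$ via the same reindexing you describe and a sign computation of the same shape. The paper additionally checks explicitly that $\varphi$ preserves degree, which you omit, but that part is straightforward from \eqref{EqDegreeInternalCochains}.
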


\begin{proof}
 We already know that $\varphi$ is an isomorphism of $R$-modules, so we only need to show that $\varphi$ respects the degree and that it commutes with the differentials. 
 
 Let $j \in \ZZ$ and $j_0,j_1,\dots,j_n \in \ZZ$ for some $n \in \NN_0$ with
 \begin{equation}
 \label{explicitdegreej}
  j = n - \sum_{q=0}^n j_q
 \end{equation}
 and let $f \in \Hom \left(M_{j_0}\otimes A_{j_1}\otimes \dots \otimes A_{j_n},R\right) \subset \Hom(CH_j(A;M),R)$. For the degree statement, it suffices to show that $\deg \varphi(f) = j$ for any such $f$. 
 
 By definition, it holds that $\varphi(f)(a_1,\dots,a_n) \neq 0$ only if $a_q \in A_{j_q}$ for every $q \in \{1,2,\dots,n\}$ and that in this case $\varphi(f)(a_1,\dots,a_n) \in M^{j_0} = M^{-(-j_0)}$.
So we know that the degree of $\varphi(f)$ as a homomorphism of $R$-bimodules from $A^{\otimes n}$ to $M^{-*}$ is given by
\begin{equation*}
 -j_0 - \sum_{q=1}^n j_q \stackrel{(\ref{explicitdegreej})}{=} j-n \ .
\end{equation*}
Therefore $\varphi(f) \in \Hom^{j-n}(A^{\otimes n},M) \subset CH^j(A;M)$. Put differently, $\varphi$ is a degree-preserving map. 
 \bigskip

We will explicitly check the compatibility of $\varphi$ with the codifferentials. Let $n \in \NN_0$ and $f_n \in \Hom(M\otimes A^{\otimes n},R)$ for some $n \in \NN_0$. Then by definition, for any fixed $l \in \NN_0$, $a_1,\dots,a_{n+l} \in A$ and $m \in M$, we have
 \begin{align}
  &(\varphi(b^*f_n)(a_1,\dots,a_{n+l}))(m) =  (-1)^{\mu_M(m)\cdot \maltese_1^{n+l}}(f_n \circ b)(m,a_1,\dots,a_{n+l}) \notag \\
  &= \sum_{i=0}^n (-1)^{\mu_M(m)\cdot \maltese_1^{n+l}+\maltese_0^{i-1}}f_n(m,a_1,\dots,a_{i-1},\mu_{l+1}(a_i,\dots,a_{i+l}),a_{i+l+1},\dots,a_{n+l}) \notag \\
  &+ \sum_{i=n+1}^{n+l} (-1)^{\mu_M(m) \cdot \maltese_1^{n+l}+*_{i-1}}f_n(\mu^M_{n+l-i+1,i-n-1}(a_i,\dots,a_{n+l},m,a_1,\dots,a_{i-n-1}),\dots,a_{i-1}) \notag \\
  &= \sum_{i=0}^n (-1)^{\mu_M(m)\cdot \maltese_1^{n+l}+\maltese_0^{i-1}}f_n(m,a_1,\dots,a_{i-1},\mu_{l+1}(a_i,\dots,a_{i+l}),a_{i+l+1},\dots,a_{n+l}) \notag \\
  &+ \sum_{i=1}^{l} (-1)^{\mu_M(m)\cdot \maltese_1^{n+l}+*_{n+i-1}}f_n(\mu^M_{l-i+1,i-1}(a_{n+i},\dots,a_{n+l},m,a_1,\dots,a_{i-1}),a_{i},\dots,a_{n+i-1}) \notag \\
  &= \sum_{i=1}^n (-1)^{\mu_M(m) \cdot (\maltese_1^{n+l}+1)+\mu_M(m)\cdot \maltese_1^{n+l}+\maltese_0^{i-1}} \label{Eqpullbackf1}\\
  &\qquad \qquad (\varphi(f_n)(a_1,\dots,a_{i-1},\mu_{l+1}(a_i,\dots,a_{i+l}),a_{i+l+1},\dots,a_{n+l}))(m) \notag \\
  &+ \sum_{i=1}^{l+1} (-1)^{\mu_M(\mu^M_{l-i+1,i-1}(a_{n+i},\dots,a_{n+l},m,a_1,\dots,a_{i-1}))\cdot \maltese_i^{n+i-1}+\mu_M(m)\cdot \maltese_1^{n+l}+*_{n+i-1}} \label{Eqpullbackf2} \\
  &\qquad \qquad (\varphi(f_n)(a_{i},\dots,a_{n+i-1}))(\mu^M_{l-i+1,i-1}(a_{n+i},\dots,a_{n+l},m,a_1,\dots,a_{i-1})) \notag \ .
 \end{align}
 The congruence class of the exponent of the sign in \eqref{Eqpullbackf1} modulo two computes as 
 $$\mu_M(m) \cdot (\maltese_1^{n+l}+1)+\mu_M(m)\cdot \maltese_1^{n+l}+\maltese_0^{i-1} \equiv \mu_M(m)+\maltese_0^{i-1} = \maltese_1^{i-1} \ . $$
For the sign in \eqref{Eqpullbackf2} we consider the following congruence modulo two: 
\begin{align*}
 &\mu_M(\mu^M_{l-i+1,i-1}(a_{n+i},\dots,a_{n+l},m,a_1,\dots,a_{i-1}))\cdot \maltese_i^{n+i-1}+\mu_M(m)\cdot \maltese_1^{n+l}+*_{n+i-1} \\ 
 &\equiv (\maltese_{n+i}^{n+l} + \mu_M(m) + \maltese_1^{i-1}+1)\cdot \maltese_i^{n+i-1}+\mu_M(m)\cdot \maltese_1^{n+l}+(\mu_M(m)+\maltese_1^{n+i-1})\maltese_{n+i}^{n+l} \\ 
 &\equiv \mu_M(m) \cdot \maltese_1^{i-1} + \maltese_i^{n+i-1} + \maltese_{n+i}^{n+l} \cdot \maltese_i^{n+i-1} + \maltese_1^{i-1} \cdot \maltese_i^{n+i-1} + \maltese_1^{n+i-1} \cdot \maltese_{n+i}^{n+l} \\
 &\equiv \maltese_1^{i-1}(\mu_M(m)+\maltese_i^{n+l}) + \maltese_i^{n+i-1} \ . 
\end{align*}
Hence, we obtain 
\begin{align}
 &(\varphi(b^*f_n)(a_1,\dots,a_{n+l}))(m) \notag \\
 &= \sum_{i=1}^n (-1)^{\maltese_1^{i-1}} (\varphi(f_n)(a_1,\dots,a_{i-1},\mu_{l+1}(a_i,\dots,a_{i+l}),a_{i+l+1},\dots,a_{n+l}))(m) \notag \\
 &+ \sum_{i=1}^{l+1} (-1)^{\maltese_1^{i-1}(\mu_M(m)+\maltese_i^{n+l}) + \maltese_i^{n+i-1}} \notag \\
 &\qquad \qquad (\varphi(f_n)(a_{i},\dots,a_{n+i-1}))(\mu^M_{l-i+1,i-1}(a_{n+i},\dots,a_{n+l},m,a_1,\dots,a_{i-1})) \notag \\
 &= \sum_{i=1}^n (-1)^{\maltese_1^{i-1}} (\varphi(f_n)(a_1,\dots,a_{i-1},\mu_{l+1}(a_i,\dots,a_{i+l}),a_{i+l+1},\dots,a_{n+l}))(m) \notag \\
 &+ \sum_{i=1}^{l+1} (-1)^{\maltese_1^{i-1}(\mu_M(m)+\maltese_i^{n+l}) + \maltese_i^{n+i-1}+ \ddag_{i-1,l+i-1}(a_1,\dots,a_{i-1},\varphi(f_n)(a_i,\dots,a_{n+i-1}),a_{n+i},\dots,a_{n+l},m)} \label{Eqpullbackf3} \\
 &\qquad \qquad (\mu_{i-1,l-i+1}^*(a_1,\dots,a_{i-1},\varphi(f_n)(a_{i},\dots,a_{n+i-1}),a_{n+i},\dots,a_{n+l}))(m) \ . \notag
\end{align}
The sign appearing in \eqref{Eqpullbackf3} can be described independently of $m$. To reformulate the sign accordingly, we note that by definition of $f_n$, it holds for $i \in \{0,1,\dots,n\}$ that 
  \begin{align*}
  \deg \varphi(f_n) &= \mu_M(m) - \sum_{j=1}^{i-1} \mu(a_j) - \mu(\mu_{l+1}(a_i,\dots,a_{i+l})) - \sum_{j=i+l+1}^{n+l}\mu(a_j)- n \\
  &= \mu_M(m)- \sum_{j=1}^{n+l} \mu(a_j)-n+l-1 = \mu_M(m) - \maltese_1^{n+l} - 1  \ ,    
  \end{align*}
 which yields
 \begin{equation}
  \label{Eqmummodtwo}
  \mu_M(m) \equiv \deg \varphi(f_n) + \maltese_1^{n+l} +1 \ . 
 \end{equation}
Moreover, we compute for each $i \in \{0,1,\dots,n\}$ for the degree in $M^{*}$ that 
\begin{equation}
\label{Eqredphimodtwo}
\mu_{M^*}\left(\varphi(f_n)(a_i,\dots,a_{n+i-1})\right) = \deg \varphi(f_n) + \maltese_i^{n+i-1} \ .
\end{equation}
 We may thus simplify the sign from \eqref{Eqpullbackf3} as follows: 
 \begin{align*}
  &\maltese_1^{i-1}(\mu_M(m)+\maltese_i^{n+l}) + \maltese_i^{n+i-1} \\
  &\qquad \qquad \qquad + \ddag_{i-1,l+i-1}(a_1,\dots,a_{i-1},\varphi(f_n)(a_i,\dots,a_{n+i-1}),a_{n+i},\dots,a_{n+l},m) \\
  &\equiv \maltese_1^{i-1}(\mu_M(m)+\maltese_i^{n+l}) + \maltese_i^{n+i-1} + \mu_{M^*}\left(\varphi(f_n)(a_i,\dots,a_{n+i-1})\right) +1  \\
  &\qquad +\maltese_1^{i-1}\left(\maltese_{n+i}^{n+l} + \mu_{M^*}\left(\varphi(f_n)(a_i,\dots,a_{n+i-1})\right) + \mu_M(m)\right) \\
  &\stackrel{\eqref{Eqmummodtwo}}{\equiv} \maltese_1^{i-1}(\deg \varphi(f_n)+\maltese_1^{i-1}+1) + \maltese_i^{n+i-1} + \mu_{M^*}\left(\varphi(f_n)(a_i,\dots,a_{n+i-1})\right) +1\\
  &\qquad \qquad +\maltese_1^{i-1}(\maltese_1^{n+i-1}+\mu_{M^*}\left(\varphi(f_n)(a_i,\dots,a_{n+i-1})\right)+\deg \varphi(f_n)+1)   \\
  &\stackrel{\eqref{Eqredphimodtwo}}{\equiv} \maltese_1^{i-1} \cdot \deg \varphi(f_n) + \deg \varphi(f_n) +1+ \maltese_1^{i-1}(\maltese_1^{i-1} +1) \\
  &\equiv \deg \varphi(f_n)(\maltese_1^{i-1} +1)+1 \ . 
 \end{align*}
Consequently, 
\begin{align*}
 &(\varphi(b^*f_n)(a_1,\dots,a_{n+l}))(m) \notag \\
 &= \sum_{i=1}^n (-1)^{\maltese_1^{i-1}} (\varphi(f_n)(a_1,\dots,a_{i-1},\mu_{l+1}(a_i,\dots,a_{i+l}),a_{i+l+1},\dots,a_{n+l}))(m) \notag \\
 &+ \sum_{i=1}^{l+1} (-1)^{\deg \varphi(f_n)(\maltese_1^{i-1} +1)+1} (\mu_{i-1,l-i+1}^*(a_1,\dots,a_{i-1},\varphi(f_n)(a_{i},\dots,a_{n+i-1}),\dots,a_{n+l}))(m) \\
 &= \left(\left(\beta(\varphi(f_n))(a_1,\dots,a_{n+l}\right)\right)(m) \ .
\end{align*}
\end{proof}

\begin{cor}
\label{fupperstar} \index{morphism of $A_\infty$-bimodules!induced Hochschild cochain map}
 Let $M$ and $N$ be $A_\infty$-bimodules over $A$ and let $f=(f_{r,s})_{r,s\in\NN_0}: M \to N$ be a morphism of $A_\infty$-bimodules. Then $f$ induces a chain map
 \begin{align*}
  &f^*: CH^*(A;N^{-*}) \to CH^*(A;M^{-*}) \ ,\\
  &f^* := \varphi_M \circ \left(f_*\right)^* \circ \varphi_N^{-1} \ ,
 \end{align*}
where $\left(f_*\right)^*: \Hom(CH_*(A;N),R) \to \Hom(CH_*(A;M),R)$ denotes the pull-back of the map $f_*$ from Theorem \ref{inducedHochschildchainmap}.
\end{cor}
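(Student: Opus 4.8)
The plan is to deduce the statement formally from two results already established: that $f_*$ is a chain map (Theorem \ref{inducedHochschildchainmap}) and that $\varphi_M$ and $\varphi_N$ are isomorphisms of cochain complexes intertwining the pulled-back Hochschild differential $b^*$ with the Hochschild codifferential $\beta$ (Theorem \ref{HochschildDuality}). Since $f^*$ is by definition the conjugate $\varphi_M \circ (f_*)^* \circ \varphi_N^{-1}$ of the pullback $(f_*)^*$ by these two isomorphisms, the whole claim reduces to checking that $f^*$ commutes with $\beta$. This is a short diagram chase, and crucially no new sign computation is required, the signs having all been absorbed into the earlier theorems.

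First I would record the dual commutation relation coming from $f_*$ being a chain map. By Theorem \ref{inducedHochschildchainmap} we have $b \circ f_* = f_* \circ b$ as maps $CH_*(A;M) \to CH_*(A;N)$, where the left-hand $b$ is the Hochschild differential on $CH_*(A;N)$ and the right-hand one that on $CH_*(A;M)$. Applying the $R$-linear pullback $(-)^*$, which is contravariant and hence reverses the order of composition, yields
\begin{equation*}
 (f_*)^* \circ b^* = b^* \circ (f_*)^* \ ,
\end{equation*}
where on the left $b^*$ acts on $\Hom(CH_*(A;N),R)$ and on the right on $\Hom(CH_*(A;M),R)$. In other words, $(f_*)^*$ is itself a chain map for the two pulled-back differentials.

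It then remains to transport this across the isomorphisms $\varphi_M, \varphi_N$. Theorem \ref{HochschildDuality} gives $\beta \circ \varphi_M = \varphi_M \circ b^*$ and $\beta \circ \varphi_N = \varphi_N \circ b^*$; the latter rearranges to $b^* \circ \varphi_N^{-1} = \varphi_N^{-1} \circ \beta$. Combining these with the relation from the previous step, I compute
\begin{align*}
 \beta \circ f^* &= \beta \circ \varphi_M \circ (f_*)^* \circ \varphi_N^{-1} = \varphi_M \circ b^* \circ (f_*)^* \circ \varphi_N^{-1} \\
 &= \varphi_M \circ (f_*)^* \circ b^* \circ \varphi_N^{-1} = \varphi_M \circ (f_*)^* \circ \varphi_N^{-1} \circ \beta = f^* \circ \beta \ ,
\end{align*}
so that $f^*$ commutes with the codifferentials and is the desired chain map. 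Since $\varphi_M$ and $\varphi_N$ preserve degree, $f^*$ has the same degree as $(f_*)^*$.

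The argument is almost entirely formal, so there is no genuine analytic obstacle; the only points demanding care are bookkeeping ones. One must keep track of the contravariance of the pullback --- it is precisely the order reversal $(f_* \circ b)^* = b^* \circ (f_*)^*$ that lets $(f_*)^*$ slide $b^*$ from the $N$-side to the $M$-side --- and one must remember that the symbols $b$, $b^*$, $\beta$, and $\varphi$ each denote two different maps, one attached to $M$ and one to $N$, applied in the correct slots of the composition. Verifying that $f^*$ is genuinely well defined on $CH^*(A;N^{-*})$, i.e.\ that $\varphi_N^{-1}$ lands in $\Hom(CH_*(A;N),R)$ where $(f_*)^*$ is defined, is immediate from the definitions of $\varphi_M$ and $\varphi_N$.
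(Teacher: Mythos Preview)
Your proof is correct and takes essentially the same approach as the paper, which simply states that the claim is an immediate consequence of Theorem \ref{inducedHochschildchainmap} and Theorem \ref{HochschildCoVarphi}; you have merely spelled out the formal diagram chase that justifies this.
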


\begin{proof}
 This is an immediate consequence of Theorem \ref{inducedHochschildchainmap} and Theorem \ref{HochschildCoVarphi}.
\end{proof}

We end this section by introducing some additional terminology.

\begin{definition}  \nomenclature{$CH_*(A)$, $CH^*(A)$}{Hochschild (co)chain complex of $A$ with coefficients in $A[1]$} \nomenclature{$HH_*(A)$, $HH^*(A)$}{Hochschild (co)homology of $A$ with coefficients in $A[1]$}
We define a chain complex $CH_*(A)$ and a cochain complex $CH^*(A)$ by putting 
$$CH_j(A) := CH_{j-1}(A;A[1]) \quad \text{and} \quad CH^j(A) := CH^{j-1}(A;A[1])$$
for every $j \in \ZZ$ and equipping it with the Hochschild differential and the Hochschild codifferential, resp. We call $CH_*(A)$ the \emph{ Hochschild chain complex of $A$} and $CH^*(A)$ the \emph{Hochschild cochain complex of $A$}. We denote their (co)homology by $HH_*(A)$ and $HH^*(A)$ and call it the \emph{Hochschild homology of $A$} and the \emph{Hochschild cohomology of $A$}, resp. 

We redefine the degree of elements of $CH_*(A)$ and $CH^*(A)$ as follows: for $\sigma \in CH_*(A)$ we will write $\deg \sigma = j$ iff $\sigma \in CH_j(A)$ and for $\varphi \in CH^j(A)$ we write $\deg f=j$ iff $f \in CH^j(A)$. (These degrees differ from the previously introduced degrees by $1$.)
\end{definition}

The previous definition might look awkward to the reader because of the grading conventions. The purpose of defining $CH_j(A)$ as the $(j-1)$-st group of $CH_*(A;A[1])$ is that to a certain level we want to revoke the degree shift in the coefficient bimodule. The reason for considering the shifted $A_\infty$-algebra $A[1]$ was that with our sign conventions $A[1]$ is an $A_\infty$-bimodule over $A$ with the operations from Remark \ref{RemarkBimoduleonA1} while $A$ itself is not. This grading convention will ensure that the Hochschild cup product will be a homomorphism of degree zero as we will show in the next section.

Finally, we give an explicit description of the complexes $CH_*(A)$ and $CH^*(A)$ in terms of our new gradings. For $n \in \NN_0$ and $a_0\otimes a_1 \otimes \dots a_n \in A^{\otimes (n+1)} \subset CH_*(A)$ it holds that 
$$\deg (a_0 \otimes a_1 \otimes \dots \otimes a_n) = n - \sum_{j=0}^n \mu(a_j) \ . $$
We further compute that 
\begin{align*}
&b(a_0\otimes a_1 \otimes \dots \otimes a_n) \\
&= \sum_{l=1}^{n+1} \sum_{i=0}^{n-l+1} (-1)^{\maltese_0^{i-1}} a_0 \otimes a_1 \otimes \dots \otimes a_{i-1} \otimes \mu_l(a_i,\dots,a_{i+l-1})\otimes a_{i+l} \otimes \dots \otimes a_n \\
&+ \sum_{l=1}^{n+l} \sum_{i=n-l+2}^n (-1)^{*_{i-1}} \mu_{l}(a_i,\dots,a_n,a_0,a_1,\dots,a_{i+l-n-2}) \otimes a_{i+l-n-1} \otimes \dots \otimes a_{i-1} \ , 
\end{align*}
where 
$$\maltese_0^{i-1} = \sum_{j=0}^{i-1} \|a_j\| = \sum_{j=0}^{i-1} \mu(a_j)-i \quad \text{and} \quad *_{i-1} = \Bigl(\sum_{j=0}^{i-1} \|a_j\|\Bigr) \cdot \Bigl(\sum_{j=i}^n \|a_j\| \Bigr) \  $$
for all $i \in \{0,1,\dots,n\}$. 
Given $j \in \ZZ$ and $f \in \Hom^*(A^{\otimes n},A)$, it holds by definition of the degree that $f \in CH^j(A)$ if and only if $f \in \Hom^{j-n-1}(A^{\otimes n},A[1])$. In other words, for all $b_1,\dots,b_n \in A$ it holds that 
\begin{align*}
\|f(b_1,\dots,b_n)\| &= \sum_{j=1}^n \mu(b_j) + \deg f - n-1 \\
\Leftrightarrow \quad \mu(f(b_1,\dots,b_n))&=\sum_{j=1}^n \mu(b_j)+\deg f-n \ . 
\end{align*}

Moreover, if $(\beta(f))_l$ denotes the $\Hom(A^{\otimes (n+l)},A)$-component of $\beta(f)$, then
\begin{align*}
 &(\beta(f))_l(a_1,\dots,a_{n+l}) = \sum_{i=1}^n (-1)^{\maltese_1^{i-1}} f(a_1,\dots,a_{i-1},\mu_{l+1}(a_i,\dots,a_{i+l}),a_{i+l+1},\dots,a_{n+1}) \\
    &\quad + \sum_{i=1}^{l+1} (-1)^{(\deg f-1) \cdot \left(\maltese_{1}^{i-1}+1 \right)+1} \mu^M_{i-1,l-i+1}(a_1,\dots,a_{i-1},f(a_i,\dots,a_{n+i-1}),a_{n+i},\dots,a_{n+l}) \ 
\end{align*}
for all $l \in \NN_0$ and $a_1,\dots,a_{n+l} \in A$.

\section{The Hochschild cup product}
\label{SectionHochschildCupProduct}

In general, the Hochschild cohomology of an $A_\infty$-bimodule $M$ over an $A_\infty$-algebra $A$ does not possess a cup product. This is because a product $$HH^*(A;M)\otimes HH^*(A;M) \to HH^*(A;M)$$ would require operations having two elements of $M$ (and possibly elements of $A$) as inputs and one element of $M$ as output. 

Such operations do not exist for general $A_\infty$-bimodules, but in this section we show that if $M=A[1]$, then one can use the $A_\infty$-multiplications $(\mu_n)_{n \in \NN}$ on $A$ to define a cup product $CH^*(A)\otimes CH^*(A) \to CH^*(A)$. 

 In the classical case of Hochschild cohomology of associative algebras, the cup product was discovered by Murray Gerstenhaber in \cite{Gerstenhaber}, see also \cite[Section 1]{AbbaspourSurvey}. The cup product on the Hochschild cohomology of $A_\infty$-algebras is discussed by Tradler in \cite[Section 6]{TradlerInner} and \cite[Section 3.2]{TradlerBV}. Once again our sign conventions differ from those used by Tradler.

\begin{definition} \index{Hochschild!cup product}
 Let $m,n \in \NN_0$, $f \in \Hom^*(A^{\otimes m},A)$ and $g \in \Hom^*(A^{\otimes n},A)$ For all $k \in \NN_0$ and $j_1,j_2 \in \NN$ with $1 \leq j_1 \leq n+k$, $j_1+m \leq j_2 \leq m+k+1$, define \nomenclature{$\cup_{k,j_1,j_2}$}{component map of the Hochschild cup product}
\begin{align*}
 &f \cup_{k,j_1,j_2} g \in \Hom^*(A^{\otimes (m+n+k)},A) \\
 &\left( f \cup_{k,j_1,j_2} g \right)(a_1,\dots,a_{m+n+k}) := (-1)^{\mho_{j_1,j_2}(f,g,a_1,\dots,a_{m+n+k})} \\
 &\qquad \qquad \qquad \mu_{k+2}(a_1,\dots,f(a_{j_1},\dots,a_{j_1+m-1}),\dots,g(a_{j_2},\dots,a_{j_2+n-1}),\dots,a_{m+n+k}) \ , 
\end{align*}
for all $a_1,\dots,\dots,a_{m+n+k}$, where for all $s \in \NN$ with $s \geq j_2$ we put  \nomenclature{$\mho_{j_1,j_2}$}{exponents of the signs defining the Hochschild cup product}
$$\mho_{j_1,j_2}(f,g,a_1,\dots,a_{s}) := (\deg f-1)\cdot \maltese_1^{j_1-1} + (\deg g-1) \cdot (\maltese_1^{j_2-1}+\deg f) \ .$$
Define a homomorphism 
\begin{align*}
&\cup^{m,n}: \Hom^*(A^{\otimes m},A)\otimes \Hom^*(A^{\otimes n},A) \to \prod_{k \geq 0} \Hom^*(A^{\otimes(m+n+k)},A) \ , \\
&f \cup^{m,n} g := \sum_{k \in \NN_0} \sum_{j_1=1}^{n+k} \sum_{j_2=j_1+m}^{m+k+1} f \cup_{k,j_1,j_2} g \ .
\end{align*}
We extend every $\cup^{m,n}$ by zero to a homomorphism $CH^*(A)\otimes CH^*(A) \to CH^*(A)$.  The \emph{Hochschild cup product for $A$} is then defined by
\begin{align*}
 &\cup: CH^*(A)\otimes CH^*(A) \to CH^*(A) \ , \quad f \cup g := \sum_{m,n\in \NN_0} f \cup^{m,n} g \ .
\end{align*}
\end{definition}

\begin{theorem} \index{Hochschild!cup product}
\label{CupProductChainMap}
 The Hochschild cup product of $A$ is a chain map of degree zero.
\end{theorem}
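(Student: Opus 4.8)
The plan is to unpack the two assertions contained in the statement that $\cup$ is a chain map of degree zero. First, that $\cup$ is homogeneous, i.e.\ $\deg(f\cup g)=\deg f+\deg g$; and second, that $\cup$ intertwines $\beta$ with the tensor-product codifferential on $CH^*(A)\otimes CH^*(A)$, which amounts to the graded Leibniz rule
\begin{equation*}
 \beta(f\cup g) = (\beta f)\cup g + (-1)^{\deg f}\, f\cup(\beta g) \ .
\end{equation*}
By bilinearity it suffices to treat homogeneous $f\in\Hom^*(A^{\otimes m},A)$ and $g\in\Hom^*(A^{\otimes n},A)$ and to evaluate both sides on a generator $a_1\otimes\dots\otimes a_{m+n+k}$; fixing the exact Koszul sign above is itself part of the sign computation below.

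For the degree statement I would argue directly from the grading conventions recorded at the end of Section \ref{SectionHochschildCohomology}. A single summand $f\cup_{k,j_1,j_2}g$ is $\mu_{k+2}$ (of internal degree $-k$) applied to the $k+2$ arguments obtained by feeding $m$ inputs to $f$ and $n$ inputs to $g$. Using $\mu(f(\dots))=\sum\mu(\cdot)+\deg f-m$ and its analogue for $g$, the sum of the indices of the $k+2$ arguments is $\sum_{q=1}^{m+n+k}\mu(a_q)+\deg f+\deg g-m-n$, so applying $\mu_{k+2}$ gives $\mu(\mu_{k+2}(\dots))=\sum_{q=1}^{m+n+k}\mu(a_q)+\deg f+\deg g-m-n-k$. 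This is precisely the index forcing $f\cup_{k,j_1,j_2}g\in\Hom^*(A^{\otimes(m+n+k)},A)$ to have Hochschild degree $\deg f+\deg g$, and since $k,j_1,j_2$ do not enter the computation, $\cup$ is homogeneous of degree zero.

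The substance of the theorem is the Leibniz rule. I would expand $\beta(f\cup g)$ and sort its summands by the position of the newly inserted operation $\mu_{l+1}$ relative to the blocks $(a_{j_1},\dots,a_{j_1+m-1})$ and $(a_{j_2},\dots,a_{j_2+n-1})$ feeding $f$ and $g$. A $\beta$-insertion whose output is consumed by $f$ reconstitutes an inner term of $\beta f$, and summing these over all configurations reproduces, after reindexing, the inner part of $(\beta f)\cup g$; symmetrically, the insertions consumed by $g$ reproduce the inner part of $f\cup(\beta g)$. After these cancel against the inner parts of $(\beta f)\cup g$ and $f\cup(\beta g)$, what remains of $\beta(f\cup g)$ are the insertions acting only on the free inputs of the cup's $\mu_{k+2}$ and the single outer insertion wrapping $\mu_{k+2}$ by a further $\mu_{l+1}$. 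Treating the outputs $F:=f(\dots)$ and $G:=g(\dots)$ as opaque elements, these remaining terms — together with the output-wrapping (``outer'') terms of $(\beta f)\cup g$ and $f\cup(\beta g)$, in which a $\mu_{l+1}$ encloses $F$ alone or $G$ alone — constitute, for each fixed collapsed configuration, the complete left-hand side of one defining equation \eqref{Ainftyequation} of $A$ evaluated on the sequence $(a_1,\dots,F,\dots,G,\dots,a_{m+n+k})$. Each such instance vanishes because $A$ is an $A_\infty$-algebra, which yields the Leibniz identity.

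The main obstacle will be the sign bookkeeping, in the spirit of the proofs of Theorem \ref{HochschildDifferential} and Theorem \ref{inducedHochschildchainmap}. Concretely, one must verify that the cup-product signs $\mho_{j_1,j_2}$, the Hochschild signs $\maltese_1^{i-1}$ carried by $\beta$, and the Koszul sign $(-1)^{\deg f}$ combine, in every configuration, to the sign $(-1)^{\maltese_1^{i-1}}$ attached to the matching summand of the $A_\infty$ defining equation on the collapsed sequence. I expect this to hinge on the identities $\mu(f(\dots))=\sum\mu(\cdot)+\deg f-m$ and $\|\mu_l(\dots)\|\equiv\maltese_1^l+1$ modulo two, together with careful index shifts aligning the various summation ranges; this is where essentially all of the work lies and where sign errors are most likely.
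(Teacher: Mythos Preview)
Your proposal is correct and follows essentially the same approach as the paper's proof: the degree computation is identical, and for the Leibniz rule you partition $\beta(f\cup g)$ by the position of the inserted $\mu_{l+1}$ relative to the $f$- and $g$-blocks, match the ``inside $f$'' and ``inside $g$'' terms against the inner parts of $(\beta f)\cup g$ and $f\cup(\beta g)$, and recognise the remaining terms (together with the outer terms wrapping $F$ or $G$) as an instance of the $A_\infty$ defining equation on the collapsed sequence $a_1,\dots,F,\dots,G,\dots,a_{m+n+d}$. The paper carries this out explicitly, isolating the sign verifications into a separate lemma (Lemma~\ref{SignsCupProduct}) containing eight congruences modulo two that play exactly the role you anticipate in your final paragraph.
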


\begin{proof}
 It suffices to show the claim for the cup product of $f \in \Hom^*(A^{\otimes m},A)$ and $g \in \Hom^*(A^{\otimes n},A)$, where $m,n \in \NN_0$. By definition of the degree in $CH^*(A)$, it holds that 
 \begin{align*}
 \mu(f(b_1,\dots,b_m)) &= \sum_{j=1}^m \mu(b_j) + \deg f-m \ , \\ 
 \mu(g(c_1,\dots,c_n)) &= \sum_{j=1}^n \mu(c_j)+\deg g-n \ , 
 \end{align*}
for all $b_1,\dots,b_m,c_1,\dots,c_n \in A$. We thus obtain for all $k,j_1,j_2$ and $a_1,\dots,a_{m+n+k} \in A$ that
\begin{align*}
 &\mu\left((f \cup_{k,j_1,j_2} g)(a_1,\dots,a_{m+n+k})\right) \\ 
 &= \mu\left( \mu_{k+2}(a_1,\dots,f(a_{j_1},\dots,a_{j_1+m-1}),\dots,g(a_{j_2},\dots,a_{j_2+n-1}),\dots,a_{m+n+k})\right) \\
 &= \sum_{q=1}^{j_1-1} \mu(a_q) + \mu\left(f(a_{j_1},\dots,a_{j_1+m-1})\right)  + \sum_{q=j_1+m}^{j_2-1}\mu(a_q) \\
 &\qquad \qquad \qquad \qquad +\mu\left(g(a_{j_2},\dots,a_{j_2+n-1})\right)+\sum_{q=j_2+n}^{m+n+k} \mu(a_q) -k \\
 &= \sum_{q=1}^{m+n-k}\mu(a_q) +\deg f+\deg g -m-n-k \ ,
\end{align*}
which yields 
$$\deg (f \cup_{k,j_1,j_2} g) = \deg f + \deg g  \ . $$
Thus, every $\cup_{k,j_1,j_2}$, and therefore $\cup$ as well, is a graded homomorphism of degree zero. 

Next we discuss the compatibility of the Hochschild cup product with the differentials. It suffices to show that
\begin{equation*}
 \beta(f \cup g) \stackrel{!}{=} \beta(f)\cup g + (-1)^{\deg f} f \cup \beta(g) 
\end{equation*}
 for all $m,n \in \NN_0$, $f \in \Hom^*(A^{\otimes m},A)$ and $g \in \Hom^*(A^{\otimes n},A)$. 
Given $m$, $n$, $f$ and $g$, we thus need to show that
\begin{align*}
 &(\beta(f\cup g))(a_1,\dots,a_{m+n+d}) \\
&\stackrel{!}{=}(\beta(f)\cup g)(a_1,\dots,a_{m+n+d}) + (-1)^{\deg f} (f \cup \beta(g))(a_1,\dots,a_{m+n+d}) 
\end{align*}
for all $d \in \NN_0$ and $a_1,\dots,a_{m+n+d} \in A$. The left-hand side of this equation computes as
\begin{align*}
(\beta(f \cup g))(a_1,\dots,a_{m+n+d}) &=\sum_{k\in \NN_0} \sum_{j_1=1}^{n+k} \sum_{j_2=j_1+m}^{m+k+1} \beta(f \cup_{k,j_1,j_2} g)(a_1,\dots,a_{m+n+d}) \\
&=\sum_{\stackrel{k,l \in \NN_0}{k+l=d}} \sum_{j_1=1}^{n+k} \sum_{j_2=j_1+m}^{m+k+1} (\beta(f \cup_{k,j_1,j_2} g))_l(a_1,\dots,a_{m+n+d}) \ .
\end{align*}
For fixed $k$, $l$, $j_1$ and $j_2$ we further compute that
\begin{align}
 &(\beta(f \cup_{k,j_1,j_2} g))_l(a_1,\dots,a_{m+n+d}) \\
&= \sum_{i=1}^{j_1-1} (-1)^{\maltese_1^{i-1}+\mho_{j_1,j_2}(f,g,a_1,\dots,\mu_{l+1}(a_i,\dots,a_{i+l}),\dots,a_{m+n+d})} \label{cupalg1} \\
&\qquad \qquad \mu_{k+2}(a_1,\dots,\mu_{l+1}(a_i,\dots,a_{i+l}),\dots,f(a_{j_1+l},\dots,a_{j_1+l+m-1}),\dots, \notag \\ &\phantom{booooooooooooooooooooooooooooooooooooooo} g(a_{j_2+l},\dots,a_{j_2+l+n-1}),\dots,a_{m+n+d}) \notag \\
&+ \sum_{i=j_1}^{j_1+m-1} (-1)^{\maltese_1^{i-1}+\mho_{j_1,j_2}(f,g,a_1,\dots,\mu_{l+1}(a_i,\dots,a_{i+l}),\dots,a_{m+n+d})} \label{cupalg2} \\
&\qquad \qquad \mu_{k+2}(a_1,\dots,f(a_{j_1},\dots,\mu_{l+1}(a_i,\dots,a_{i+l}),\dots,a_{j_1+l+m-1}),\dots, \notag \\ &\phantom{booooooooooooooooooooooooooooooooooooooo} g(a_{j_2+l},\dots,a_{j_2+l+n-1}),\dots,a_{m+n+d}) \notag \\
&+ \sum_{i=j_1+m}^{j_2-1} (-1)^{\maltese_1^{i-1}+\mho_{j_1,j_2}(f,g,a_1,\dots,\mu_{l+1}(a_i,\dots,a_{i+l}),\dots,a_{m+n+d})} \label{cupalg3} \\
&\qquad \qquad \mu_{k+2}(a_1,\dots,f(a_{j_1},\dots,a_{j_1+m-1}),\dots,\mu_{l+1}(a_i,\dots,a_{i+l}),\dots,\notag \\ &\phantom{booooooooooooooooooooooooooooooooooooooo} g(a_{j_2+l},\dots,a_{j_2+l+n-1}),\dots,a_{m+n+d}) \notag \\
&+ \sum_{i=j_2}^{j_2+n-1} (-1)^{\maltese_1^{i-1}+\mho_{j_1,j_2}(f,g,a_1,\dots,\mu_{l+1}(a_i,\dots,a_{i+l}),\dots,a_{m+n+d})} \label{cupalg4} \\
&\qquad \qquad \mu_{k+2}(a_1,\dots,f(a_{j_1},\dots,a_{j_1+m-1}),\dots,\notag \\ &\phantom{booooooooooooooooooooo}g(a_{j_2},\dots,\mu_{l+1}(a_i,\dots,a_{i+l}),\dots,a_{j_2+l+n-1}),\dots,a_{m+n+d}) \notag \\
&+ \sum_{i=j_2+n}^{m+n+k} (-1)^{\maltese_1^{i-1}+\mho_{j_1,j_2}(f,g,a_1,\dots,\mu_{l+1}(a_i,\dots,a_{i+l}),\dots,a_{m+n+d})} \label{cupalg5} \\
&\qquad \qquad \mu_{k+2}(a_1,\dots,f(a_{j_1},\dots,a_{j_1+m-1}),\dots,g(a_{j_2},\dots,a_{j_2+n-1}),\dots, \notag \\
&\phantom{boooooooooooooooooooooooooooooooooooooooooooo} \mu_{l+1}(a_i,\dots,a_{i+l}),\dots,a_{m+n+d}) \notag \\
&+\sum_{i=1}^{l+1} (-1)^{(\deg (f \cup_{k,j_1,j_2} g)-1)(\maltese_{1}^{i-1}+1)+1+\mho_{j_1,j_2}(f,g,a_i,\dots,a_{m+n+k+i-1})} \label{cupalg6} \\
&\qquad \qquad \mu_{l+1}(a_1,\dots,\mu_{k+2}(a_i,\dots,f(a_{j_1+i-1},\dots,a_{j_1+i+m-2}),\dots, \notag \\
&\phantom{boooooooooooooooooooooo} g(a_{j_2+i-1},\dots,a_{j_2+i+n-2}),\dots,a_{m+n+k+i-1}),\dots,a_{m+n+d}) \notag \ .
\end{align}
For all $s \in \NN$, $l \in \NN_0$ and $i \in \{1,2,\dots,s\}$ we put 
$$ \beta_{i,l}(h)(a_1,\dots,a_{s+l}) := (-1)^{\maltese_1^{i-1} } h(a_1,\dots,a_{i-1},\mu_{l+1}(a_i,\dots,a_{i+l}),\dots,a_{s+l}) $$ for all $h \in \Hom(A^{\otimes s},A)$ and $a_1,\dots,a_{s+l} \in A$, such that $\sum_{i=1}^{n}\beta_{i,l}(h)$ is a part of $(\beta(h))_l$.

Using this notation, we reformulate the sum in (\ref{cupalg2}) as
\begin{align*}
&\sum_{i=j_1}^{j_1+m-1} (-1)^{\maltese_{1}^{i-1}+\mho_{j_1,j_2}(f,g,a_1,\dots,\mu_{l+1}(a_i,\dots,a_{i+l}),\dots,a_{m+n+d})} \mu_{k+2}(a_1,\dots,\\
&\qquad f(a_{j_1},\dots,\mu_{l+1}(a_i,\dots,a_{i+l}),\dots,a_{j_1+l+m-1}),\dots, g(a_{j_2+l},\dots,a_{j_2+l+n-1}),\dots,a_{m+n+d})  \\
&=\sum_{i=j_1}^{j_1+m-1} (-1)^{\maltese_{1}^{j_1-1}+\mho_{j_1,j_2}(f,g,a_1,\dots,\mu_{l+1}(a_i,\dots,a_{i+l}),\dots,a_{m+n+d})}\\
&\qquad \mu_{k+2}(a_1,\dots,\beta_{i-j_1+1,l}(f)(a_{j_1},\dots,a_{j_1+l+m-1}),\dots,  g(a_{j_2+l},\dots,a_{j_2+l+n-1}),\dots)  \\
&=\sum_{i=j_1}^{j_1+m-1} (-1)^{\maltese_{1}^{j_1-1}+\mho_{j_1,j_2}(f,g,a_1,\dots,\mu_{l+1}(a_i,\dots,a_{i+l}),\dots,a_{m+n+d})+\mho_{j_1,j_2+l}(\beta_{j_1-i+1,l}(f),g,a_1,\dots,a_{m+n+d})} \\
&\qquad \qquad \qquad (\beta_{i-j_1+1,l}(f)\cup_{k,j_1,j_2+l} g)(a_1,\dots,a_{m+n+d}) \\
&=\sum_{i=j_1}^{j_1+m-1} (\beta_{i-j_1+1,l}(f)\cup_{k,j_1,j_2+l} g)(a_1,\dots,a_{m+n+d}) \\
&= \sum_{i=1}^{m} (\beta_{i,l}(f)\cup_{k,j_1,j_2+l} g)(a_1,\dots,a_{m+n+d}) \ , 
\end{align*}
where we have used part 1 of Lemma \ref{SignsCupProduct}. We further compute from the definition of the Hochschild codifferential that
\begin{align*}
 &\sum_{i=1}^{m} (\beta_{i,l}(f)\cup_{k,j_1,j_2+l} g)(a_1,\dots,a_{m+n+d}) = ((\beta(f))_l \cup_{k,j_1,j_2+l} g)(a_1,\dots,a_{m+n+d}) \\
 &\quad- \sum_{i=j_1}^{j_1+l} (-1)^{\mho_{j_1,j_2+l}((\beta(f))_l,g,a_1,\dots,a_{m+n+d})+(\deg f-1) (\maltese_{j_1}^{i-1}+1)+1} \\
 &\qquad \mu_{k+2}(a_1, \dots, \mu_{l+1}(a_{j_1},\dots,f(a_i,\dots,a_{i+m-1}),\dots,a_{j_1+m+l-1}),\dots, \\
 &\qquad \qquad \qquad \qquad \qquad \qquad \qquad \qquad  g(a_{j_2+l},\dots,a_{j_2+l+n-1}),\dots,a_{m+n+d}) \\
 &= ((\beta(f))_l \cup_{k,j_1,j_2+l} g)(a_1,\dots,a_{m+n+d}) \\
 &+ \sum_{i=j_1}^{j_1+l} (-1)^{\deg f + \deg g+\mho_{i,j_2+l}(f,g,a_1,\dots,a_{m+n+d})+\maltese_1^{j_1-1}} \\
 &\qquad \mu_{k+2}(a_1, \dots, \mu_{l+1}(a_{j_1},\dots,f(a_i,\dots),\dots,a_{j_1+m+l-1}),\dots, g(a_{j_2+l},\dots),\dots,a_{m+n+d}) \ ,
\end{align*}
where we have used part 2 of Lemma \ref{SignsCupProduct} for the last equality. In analogy with the above, one uses part 3 of Lemma \ref{SignsCupProduct} to show that the sum (\ref{cupalg4}) is given by 
$$ \sum_{i=1}^{n} (-1)^{\deg f} (f \cup_{k,j_1,j_2} \beta_{i,l}(g))(a_1,\dots,a_{m+n+d}) \ . $$
One further applies part 4 of Lemma \ref{SignsCupProduct} to show along the lines of the above computation that this sum equals
\begin{align*}
 &(-1)^{\deg f}( f \cup_{k,j_1,j_2} (\beta(g))_l)(a_1,\dots,a_{m+n+d}) \\
 &-  \sum_{i=j_2}^{j_2+l} (-1)^{\deg f + \mho_{j_1,j_2}(f,(\beta(g))_l,a_1,\dots,a_{m+n+d})+(\deg g -1) (\maltese_{j_2}^{i-1}+1)+1} \\
 &\quad \mu_{k+2}(a_1, \dots, f(a_{j_1},\dots), \dots, \mu_{l+1}(a_{j_2},\dots,g(a_i,\dots,a_{i+n-1}),\dots,a_{j_2+n+l-1}),\dots,a_{m+n+d}) \\
  &= (-1)^{\deg f}( f \cup_{k,j_1,j_2} \beta(g))(a_1,\dots,a_{m+n+d})  \\
  &+  \sum_{i=j_2}^{j_2+l} (-1)^{\deg f + \deg g+\mho_{j_1,i}(f,g,a_1,\dots,a_{m+n+d})+\deg f-1+\maltese_1^{j_2-1}} \\
 &\quad \mu_{k+2}(a_1, \dots, f(a_{j_1},\dots), \dots, \mu_{l+1}(a_{j_2},\dots,g(a_i,\dots,a_{i+n-1}),\dots,a_{j_2+n+l-1}),\dots,a_{m+n+d}) \ .
\end{align*}
If we insert the last results into our formula for $(\beta(f \cup_{k,j_1,j_2} g))_l(a_1,\dots,a_{m+n+d})$, we will obtain up to certain sign computations, which are covered by parts 5 to 8 of Lemma \ref{SignsCupProduct} that
\begin{align}
 &(\beta(f \cup_{k,j_1,j_2} g))_l(a_1,\dots,a_{m+n+d}) \notag \\
  &= ((\beta(f))_l\cup_{k,j_1,j_2} g)(a_1,\dots,a_{m+n+d}) + (-1)^{\deg f}(f \cup_{k,j_1,j_2} (\beta(g))_l)(a_1,\dots,a_{m+n+d}) \notag \\ 
&+ \sum_{i=1}^{j_1-1} (-1)^{\deg f + \deg g + \mho_{j_1+l,j_2+l}+\maltese_1^{i-1}} \label{algcup2}\\
&\qquad \mu_{k+2}(a_1,\dots,\mu_{l+1}(a_i,\dots,a_{i+l}),\dots,f(a_{j_1+l},\dots),\dots,g(a_{j_2+l},\dots),\dots,a_{m+n+d}) \notag \\
 &+ \sum_{i=j_1}^{j_1+l} (-1)^{\deg f + \deg g + \mho_{i,j_2+l}+\maltese_{1}^{j_1-1}} \label{algcup3}\\
 &\qquad \mu_{k+2}(a_1,\dots,\mu_{l+1}(a_{j_1},\dots,f(a_i,\dots,a_{i+m-1}),\dots),\dots, g(a_{j_2+l},\dots),\dots,a_{m+n+d}) \notag \\
&+ \sum_{i=j_1+m}^{j_2-1} (-1)^{\deg g-1+\mho_{j_1,j_2+l}+\maltese_1^{i-1}} \label{algcup4} \\
&\qquad \mu_{k+2}(a_1,\dots,f(a_{j_1},\dots),\dots,\mu_{l+1}(a_i,\dots,a_{i+l}),\dots,g(a_{j_2+l},\dots),\dots,a_{m+n+d}) \notag \\
 &+ \sum_{i=j_2}^{j_2+l} (-1)^{\deg g-1 +\mho_{j_1,i}+ \maltese_1^{j_2-1}} \label{algcup5} \\ 
 &\qquad \mu_{k+2}(a_1,\dots,f(a_{j_1},\dots),\dots,\mu_{l+1}(a_{j_2},\dots,g(a_i,\dots),\dots,a_{j_2+l+n-1}),\dots,a_{m+n+d}) \notag \\
&+\sum_{i=j_2+n}^{m+n+k} (-1)^{\mho_{j_1,j_2}+\maltese_1^{i-1}} \label{algcup6} \\
&\qquad \mu_{k+2}(a_1,\dots,f(a_{j_1},\dots),\dots,g(a_{j_2},\dots),\dots,\mu_{l+1}(a_i,\dots,a_{i+l}),\dots,a_{m+n+d}) \notag \\
&+\sum_{i=1}^{l+1} (-1)^{\deg f+\deg g+ \mho_{j_1+i-1,j_2+i-1}+\maltese_1^{i-1}}\mu_{l+1}(a_1,\dots, \label{algcup7} \\
&\qquad \mu_{k+2}(a_i,\dots,f(a_{j_1+i-1},\dots),\dots,g(a_{j_2+i-1},\dots),\dots,a_{m+n+k+i-1}),\dots,a_{m+n+d}) \ , \notag
\end{align}
where $\mho_{i_1,i_2} := \mho_{i_1,i_2}(a_1,\dots,a_{m+n+d})$ for all $i_1,i_2 \in \{1,2,\dots,m+n+d\}$. We compute
\begin{equation}
\label{explicitcup}
 (\beta(f\cup g))(a_1,\dots,a_{m+n+d}) = \sum_{k+l=d} \sum_{j_1=1}^{n+k} \sum_{j_2=j_1+m}^{m+k+1} (\beta(f\cup_{k,j_1,j_2} g))_l(a_1,\dots,a_{m+n+d}) \ .
\end{equation}
The right-hand side of \eqref{explicitcup} equals the sum of the left-hand side of the previous big equality over all $k$, $j_1$ and $j_2$. Thus, we will take the sum of the right-hand side of this big equality over all $k$, $j_1$ and $j_2$ to get another formula for $(\beta(f\cup g))(a_1,\dots,a_{m+n+d})$.

The sum of (\ref{algcup2}) over all $k$,$j_1$ and $j_2$ equals
\begin{align*}
 &\sum_{k+l=d} \sum_{j_1=1}^{n+k}  \sum_{j_2=j_1+m}^{m+k+1}\sum_{i=1}^{j_1-1} (-1)^{\deg f + \deg g + \mho_{j_1+l,j_2+l}+\maltese_1^{i-1}} \\
&\qquad \mu_{k+2}(a_1,\dots,\mu_{l+1}(a_i,\dots,a_{i+l}),\dots,f(a_{j_1+l},\dots),\dots,g(a_{j_2+l},\dots),\dots,a_{m+n+d}) \\
&=\sum_{l=0}^d \sum_{j_1=l+1}^{n+d}  \sum_{j_2=j_1+m}^{m+d+1}\sum_{i=1}^{j_1-l-1} (-1)^{\deg f + \deg g + \mho_{j_1,j_2}+\maltese_1^{i-1}} \\
&\qquad \mu_{d-l+2}(a_1,\dots,\mu_{l+1}(a_i,\dots,a_{i+l}),\dots,f(a_{j_1},\dots),\dots,g(a_{j_2},\dots),\dots,a_{m+n+d}) \\
&=\sum_{j_1=1}^{n+d} \sum_{l=0}^{j_1-1} \sum_{j_2=j_1+m}^{m+d+1}\sum_{i=1}^{j_1-l-1} \quad \cdots \quad =\sum_{j_1=1}^{n+d} \sum_{j_2=j_1+m}^{m+d+1} \sum_{i=1}^{j_1-1} \sum_{l=0}^{j_1-i-1}  \quad \cdots \\
&=\sum_{j_1=1}^{n+d} \sum_{j_2=j_1+m}^{m+d+1} (-1)^{\deg f + \deg g + \mho_{j_1,j_2}} \Bigl(\sum_{i=1}^{j_1-1} \sum_{l=0}^{j_1-i-1}(-1)^{\maltese_1^{i-1} }  \Bigr. \\
&\Bigl.\phantom{\sum_{i=1}^{j_1-1}} \qquad \mu_{d-l+2}(a_1,\dots,\mu_{l+1}(a_i,\dots,a_{i+l}),\dots,f(\dots),\dots,g(\dots),\dots,a_{m+n+d}) \Bigr) \ .
\end{align*}
Summing up (\ref{algcup3}) in the same way delivers
\begin{align*}
&\sum_{k+l=d} \sum_{j_1=1}^{n+k}  \sum_{j_2=j_1+l+m}^{m+d+1} \sum_{i=j_1}^{j_1+l} (-1)^{\deg f + \deg g + \mho_{i,j_2}+\maltese_{1}^{j_1-1}} \\
&\qquad \mu_{k+2}(a_1,\dots,\mu_{l+1}(a_{j_1},\dots,f(a_i,\dots),\dots,a_{j_1+l+m-1}),\dots, g(a_{j_2},\dots),\dots,a_{m+n+d}) \ .
\end{align*}
Switching the names of the indices $i$ and $j_1$ and replacing $k$ by $d-l$ results in
\begin{align*}
 &\sum_{l=0}^d \sum_{i=1}^{n+d-l}  \sum_{j_2=i+l+m}^{m+d+1} \sum_{j_1=i}^{i+l} (-1)^{\deg f + \deg g + \mho_{j_1,j_2}+\maltese_{1}^{i-1}} \\
&\qquad \mu_{d-l+2}(a_1,\dots,\mu_{l+1}(a_{i},\dots,f(a_{j_1},\dots),\dots,a_{i+l+m-1}),\dots, g(a_{j_2},\dots),\dots,a_{m+n+d}) \\
&= \sum_{j_1=1}^{n+d} \sum_{j_2=j_1+m}^{m+d+1} (-1)^{\deg f + \deg g + \mho_{j_1,j_2}}\left(\sum_{i=1}^{j_1}\sum_{l=j_1-i}^{j_2-m-i} (-1)^{\maltese_{1}^{i-1}} \right. \\
&\qquad \left. \mu_{d-l+2}(a_1,\dots,\mu_{l+1}(a_{i},\dots,f(a_{j_1},\dots),\dots,a_{i+l+m-1}),\dots, g(a_{j_2},\dots),\dots,a_{m+n+d}) \  \right) \ .
\end{align*}
Next we discuss (\ref{algcup7}). The summation yields:
\begin{align*}
 &\sum_{k+l=d}\sum_{j_1=1}^{n+k}  \sum_{j_2=j_1+m}^{m+k+1}\sum_{i=1}^{l+1} (-1)^{\deg f + \deg g + \mho_{j_1+i-1,j_2+i-1}+\maltese_1^{i-1}} \\
&\qquad \mu_{l+1}(a_1,\dots,\mu_{k+2}(a_i,\dots,f(a_{j_1+i-1},\dots),\dots, \\
&\phantom{boooooooooooooooooooooooooooooooo} g(a_{j_2+i-1},\dots),\dots,a_{m+n+k+i-1}),\dots,a_{m+n+d}) \\
&=\sum_{k+l=d}\sum_{i=1}^{l+1} \sum_{j_1=i}^{n+k+i-1}  \sum_{j_2=j_1+m}^{m+k+i} (-1)^{\deg f + \deg g + \mho_{j_1,j_2}+\maltese_1^{i-1}} \\
&\quad \mu_{l+1}(a_1,\dots,\mu_{k+2}(a_i,\dots,f(a_{j_1},\dots),\dots,g(a_{j_2},\dots),\dots,a_{m+n+k+i-1}),\dots,a_{m+n+d}) \ .
\end{align*}
Switching the names of $k$ and $l$ and replacing $k$ by $d-l$ leads to:
\begin{align*}
 &\sum_{l=0}^d \sum_{i=1}^{d-l+1} \sum_{j_1=i}^{n+l+i-1}  \sum_{j_2=j_1+m}^{m+l+i} (-1)^{\deg f + \deg g + \mho_{j_1,j_2}+\maltese_1^{i-1}} \\
&\mu_{d-l+1}(a_1,\dots,\mu_{l+2}(a_i,\dots,f(a_{j_1},\dots),\dots,g(a_{j_2},\dots),\dots,a_{m+n+l+i-1}),\dots,a_{m+n+d}) \\
&=\sum_{l=1}^{d+1} \sum_{i=1}^{d-l+2} \sum_{j_1=i}^{n+l+i-2}  \sum_{j_2=j_1+m}^{m+l+i-1} (-1)^{\deg f + \deg g + \mho_{j_1,j_2}+\maltese_1^{i-1}} \\
&\mu_{d-l+2}(a_1,\dots,\mu_{l+1}(a_i,\dots,f(a_{j_1},\dots),\dots,g(a_{j_2},\dots),\dots,a_{m+n+l+i-2}),\dots,a_{m+n+d}) \\
&=\sum_{i=1}^{d+1} \sum_{l=1}^{d-i+2} \sum_{j_1=i}^{n+l+i-2}  \sum_{j_2=j_1+m}^{m+l+i-1} \quad \cdots \quad  =\sum_{j_1=1}^{n+d} \sum_{i=1}^{j_1} \sum_{j_2=j_1+m}^{m+d+1} \sum_{l=j_2-m-i+1}^{d-i+2} \quad \cdots \\
&=\sum_{j_1=1}^{n+d} \sum_{j_2=j_1+m}^{m+d+1} (-1)^{\deg f + \deg g + \mho_{j_1,j_2}} \left(\sum_{i=1}^{j_1} \sum_{l=j_2-m-i+1}^{d-i+2} (-1)^{\maltese_1^{i-1}}\right. \\ &\left. \mu_{d-l+2}(a_1,\dots,\mu_{l+1}(a_i,\dots,f(\dots),\dots,g(\dots),\dots,a_{m+n+l+i-2}),\dots,a_{m+n+d})  \right) \ .
\end{align*}
Thus, abusing the notation by leaving out $f$ and $g$, summing up (\ref{algcup2}), (\ref{algcup3}) and (\ref{algcup7}) over all $l$, $j_1$ and $j_2$ and adding up the three big sums results in:
\begin{equation}
\label{ibelowj1}
\begin{aligned}
&\sum_{j_1=1}^{n+d} \sum_{j_2=j_1+m}^{m+d+1} (-1)^{\deg f + \deg g + \mho_{j_1,j_2}} \\ &\qquad \left(\sum_{i=1}^{j_1} \sum_{l=0}^{d-i+2}(-1)^{\maltese_1^{i-1}} \mu_{d-l+2}(a_1,\dots,\mu_{l+1}(a_i,\dots,a_{i+l}),\dots,a_{m+n+d})  \right) \ .
\end{aligned}
\end{equation}
Next we treat (\ref{algcup4}):
\begin{align*}
 &\sum_{k+l=d}\sum_{j_1=1}^{n+k}  \sum_{j_2=j_1+m}^{m+k+1}\sum_{i=j_1+m}^{j_2-1} (-1)^{\deg g- 1+\mho_{j_1,j_2+l}+\maltese_1^{i-1}} \\
&\qquad \mu_{k+2}(a_1,\dots,f(a_{j_1},\dots),\dots,\mu_{l+1}(a_i,\dots,a_{i+l}),\dots,g(a_{j_2+l},\dots),\dots,a_{m+n+d}) \\
&=\sum_{l=0}^d \sum_{j_1=1}^{n+d-l} \sum_{j_2=j_1+l+m}^{m+d+1}\sum_{i=j_1+m}^{j_2-l-1} (-1)^{\deg g- 1+\mho_{j_1,j_2}+\maltese_1^{i-1}} \\
&\qquad \mu_{d-l+2}(a_1,\dots,f(a_{j_1},\dots),\dots,\mu_{l+1}(a_i,\dots,a_{i+l}),\dots,g(a_{j_2},\dots),\dots,a_{m+n+d}) \\
&=\sum_{j_1=1}^{n+d} \sum_{j_2=j_1+m}^{m+d+1} \sum_{i=j_1+m}^{j_2-1} \sum_{l=0}^{j_2-i-1}(-1)^{\deg g- 1+\mho_{j_1,j_2}+\maltese_1^{i-1}} \\
&\qquad \mu_{d-l+2}(a_1,\dots,f(a_{j_1},\dots),\dots,\mu_{l+1}(a_i,\dots,a_{i+l}),\dots,g(a_{j_2},\dots),\dots,a_{m+n+d}) \ .
\end{align*}
For (\ref{algcup5}) we obtain analogously by switching the names of the indices $i$ and $j_2$:
\begin{align*}
&\sum_{k+l=d}\sum_{j_1=1}^{n+k}  \sum_{j_2=j_1+m}^{m+k+1}\sum_{i=j_2}^{j_2+l} (-1)^{\deg g-1 +\mho_{j_1,i}+ \maltese_1^{j_2-1}} \\ 
&\qquad \mu_{k+2}(a_1,\dots,f(a_{j_1},\dots),\dots,\mu_{l+1}(a_{j_2},\dots,g(a_i,\dots),\dots,a_{j_2+l+n-1}),\dots,a_{m+n+d}) \\
&=\sum_{l=0}^d \sum_{j_1=1}^{n+d-l}  \sum_{i=j_1+m}^{m+d-l+1}\sum_{j_2=i}^{i+l} (-1)^{\deg g-1 +\mho_{j_1,j_2}+ \maltese_1^{i-1}} \\ 
&\qquad \mu_{d-l+2}(a_1,\dots,f(a_{j_1},\dots),\dots,\mu_{l+1}(a_{i},\dots,g(a_{j_2},\dots),\dots,a_{i+l+n-1}),\dots,a_{m+n+d}) \\
&=\sum_{l=0}^d \sum_{j_1=1}^{n+d-l} \sum_{j_2=j_1+m}^{m+d+1}\sum_{i=j_2-l}^{j_2} \quad \cdots =\sum_{j_1=1}^{n+d} \sum_{j_2=j_1+m}^{m+d+1}\sum_{i=j_2-d}^{j_2} \sum_{l=j_2-i}^d \quad \cdots \\
&=\sum_{j_1=1}^{n+d} \sum_{j_2=j_1+m}^{m+d+1} (-1)^{\deg g -1 + \mho_{j_1,j_2}+\maltese_1^{i-1}} \\
&\qquad \mu_{d-l+2}(a_1,\dots,f(a_{j_1},\dots),\dots,\mu_{l+1}(a_{i},\dots,g(a_{j_2},\dots),\dots,a_{i+l+n-1}),\dots,a_{m+n+d}) \ .
\end{align*}
Again by abuse of notation, the summations of (\ref{algcup4}) and (\ref{algcup5}) add up to:
\begin{align}
 &\sum_{j_1=1}^{n+d} \sum_{j_2=j_1+m}^{m+d+1} (-1)^{\deg f + \deg g + \mho_{j_1,j_2}} \label{ibetweenj1j2}\\
 &\left(\sum_{i=j_1+m}^{j_2} \sum_{l=0}^{d-i+2} (-1)^{\maltese_1^{i-1} + \deg f -1} \mu_{d-l+2}(a_1,\dots, f(\dots),\dots,\mu_{l+1}(a_i,\dots),\dots,a_{m+n+d}) \right) \ . \notag
\end{align}
Finally we consider (\ref{algcup6}). Taking the sum of this part over $l$, $j_1$ and $j_2$ yields:
\begin{align}
 &\sum_{l=0}^d\sum_{j_1=1}^{n+d-l}  \sum_{j_2=j_1+m}^{m+d-l+1} \sum_{i=j_2+n}^{m+n+d-l} (-1)^{\mho_{j_1,j_2}+\maltese_1^{i-1}}  \notag \\
&\qquad \mu_{d-l+2}(a_1,\dots,f(a_{j_1},\dots),\dots,g(a_{j_2},\dots),\dots,\mu_{l+1}(a_i,\dots),\dots,a_{m+n+d}) \notag \\
&=\sum_{j_1=1}^{n+d} \sum_{l=0}^d \sum_{j_2=j_1+m}^{m+d-l+1} \sum_{i=j_2+n}^{m+n+d-l} \quad \cdots \notag \quad =\sum_{j_1=1}^{n+d} \sum_{j_2=j_1+m}^{m+d+1} \sum_{i=j_2+n}^{m+n+d} \sum_{l=0}^{m+n+d-i} \quad \cdots \notag \\
&=\sum_{j_1=1}^{n+d} \sum_{j_2=j_1+m}^{m+d+1}(-1)^{\deg f + \deg g + \mho_{j_1,j_2}} \left(\sum_{i=j_2+n}^{m+n+d} \sum_{l=0}^{m+n+d-i} (-1)^{\maltese_1^{i-1} + (\deg f-1)+(\deg g -1)} \right.  \label{ibiggerj2} \\ 
&\qquad \left. \phantom{\sum_{l=0}^{m+d} booo} \mu_{d-l+2}(a_1,\dots,f(\dots),\dots,g(\dots),\dots,\mu_{l+1}(a_i,\dots,a_{i+l}),\dots,a_{m+n+d}) \right) \ . \notag 
\end{align}
Inserting all the last results (\ref{ibelowj1}), (\ref{ibetweenj1j2}) and (\ref{ibiggerj2}) into our big formula for the cup product implies:
\begin{align*}
 &\beta(f\cup g)(a_1,\dots,a_{m+n+d}) = \left(\beta(f)\cup g+ (-1)^{\deg f} f\cup \beta(g)\right)(a_1,\dots,a_{m+n+d}) \\ 
 &+\sum_{j_1=1}^{n+d} \sum_{j_2=j_1+m}^{m+d+1}(-1)^{\deg f + \deg g + \mho_{j_1,j_2}} \\
&\Bigl(\sum_{i=1}^{j_1} \sum_{l=0}^{d-i+2} (-1)^{\maltese_1^{i-1}} \mu_{d-l+2}(a_1,\dots,a_{i-1},\mu_{l+1}(a_i,\dots),\dots,a_{m+n+d}) \Bigr. \\
 &+\sum_{i=j_1+m}^{j_2-1} \sum_{l=0}^{d} (-1)^{\maltese_1^{i-1}+(\deg f-1)} \mu_{d-l+2}(a_1,\dots,f(\dots),\dots,\mu_{l+1}(a_i,\dots),\dots,a_{m+n+d}) \\
 &+\sum_{i=j_2+n}^{m+n+d} \sum_{l=0}^{m+n+d-i} (-1)^{\maltese_1^{i-1}+(\deg f-1)+(\deg g-1)} \\ 
&\Bigl. \qquad \qquad \qquad  \phantom{\sum_{i=1}^{j_1}} \mu_{d-l+2}(a_1,\dots,f(\dots),\dots,g(\dots),\dots,\mu_{l+1}(a_i,\dots),\dots,a_{m+n+d}) \ \Bigr) \ .
\end{align*}
One checks that for any fixed $j_1$ and $j_2$ the sum inside the brackets coincides with the left-hand side of the $(d+2)$-nd defining equation of the $A_\infty$-algebra $A$, considered on the $(d+2)$ elements 
\begin{equation*}
a_1,\dots,a_{j_1-1},f(a_{j_1},\dots,a_{j_1+m-1}),a_{j_1+m},\dots,a_{j_2-1},g(a_{j_2},\dots,a_{j_2+n-1}),\dots,a_{m+n+d} \in A \ .
\end{equation*}
 Therefore, since $A$ is an $A_\infty$-algebra, the big sum in the last equation vanishes and we obtain
\begin{equation*}
 (\beta(f\cup g))(a_1,\dots,a_{m+n+d}) = (\beta(f)\cup g+ (-1)^{\deg f} f \cup \beta(g))(a_1,\dots,a_{m+n+d})\ , 
\end{equation*}
which is precisely what we had to show.
\end{proof}

The next lemma collects some useful identities which were used in the proof of Theorem \ref{CupProductChainMap}.

\begin{lemma}
\label{SignsCupProduct}
Let ``$\equiv$'' always denote the congruence of integers modulo two. 
 \begin{enumerate}[1.]
  \item If $j_1 \leq i \leq j_1+m-1$, then
\begin{align*}
&\mho_{j_1,j_2+l}(\beta_{i-j_1+1,l}(f),g,a_1,\dots,a_{m+n+d}) \\
 &\equiv \maltese_1^{j_1-1} + \mho_{j_1,j_2}(f,g,a_1,\dots,\mu_{l+1}(a_i,\dots,a_{i+l}),\dots,a_{m+n+d}) \ .
\end{align*}
\item If $j_1 \leq i \leq j_1+l$, then
\begin{align*}
 &\mho_{j_1,j_2+l}((\beta(f))_l,g,a_1,\dots,a_{m+n+d}) \\
&\equiv \maltese_1^{j_1-1}+\deg g-1 +(\deg f-1)\maltese_{j_1}^{i-1} + \mho_{i,j_2+l}(f,g,a_1,\dots,a_{m+n+d}) \ .
\end{align*}
  \item If $j_2 \leq i \leq j_2+n-1$, then
\begin{align*}
&\mho_{j_1,j_2}(f,\beta_{i-j_2+1,l}(g),a_1,\dots,a_{m+n+d}) \\
 &\equiv \deg f + \maltese_1^{j_2-1} + \mho_{j_1,j_2}(f,g,a_1,\dots,\mu_{l+1}(a_i,\dots,a_{i+l}),\dots,a_{m+n+d}) \ .
\end{align*}
\item If $j_2 \leq i \leq j_2+l$, then
\begin{align*}
 &\mho_{j_1,j_2}(f,(\beta(g))_l,a_1,\dots,a_{m+n+d}) \\
&\equiv \maltese_1^{j_2-1} +\deg f +(\deg g-1) \maltese_{j_2}^{i-1} + \mho_{j_1,i}(f,g,a_1,\dots,a_{m+n+d}) \ .
\end{align*}
\item If $1 \leq i \leq j_1-1$, then
\begin{align*}
 &\mho_{j_1,j_2}(f,g,a_1,\dots,a_{i-1},\mu_{l+1}(a_i,\dots,a_{i+l}),a_{i+l+1},\dots,a_{m+n+d}) \\
&\equiv \deg f + \deg g + \mho_{j_1+l,j_2+l}(f,g,a_1,\dots,a_{m+n+d}) \ .
\end{align*}
\item If $j_1+m \leq i \leq j_2-1$, then
\begin{align*}
&\mho_{j_1,j_2}(f,g,a_1,\dots,a_{i-1},\mu_{l+1}(a_i,\dots,a_{i+l}),a_{i+l+1},\dots,a_{m+n+d}) \\
&\equiv \deg g-1+\mho_{j_1,j_2+l}(f,g,a_1,\dots,a_{m+n+d}) \ .
\end{align*}
\item If $j_2+n \leq i \leq m+n+k$, then
\begin{align*}
 &\mho_{j_1,j_2}(f,g,a_1,\dots,a_{i-1},\mu_{l+1}(a_i,\dots,a_{i+l}),a_{i+l+1},\dots,a_{m+n+d}) \\ &\equiv \mho_{j_1,j_2}(f,g,a_1,\dots,a_{m+n+d}) \ .
\end{align*}
\item If $1 \leq i \leq l$, then:
\begin{align*}
 &(\deg (f \cup_{k,j_1,j_2} g)-1)\cdot \maltese_1^{i-1}+\mho_{j_1,j_2}(f,g,a_i,\dots,a_{m+n+k+i-1}) \\
&\equiv \maltese_1^{i-1}+\mho_{j_1+i-1,j_2+i-1}(f,g,a_1,\dots,a_{m+n+d})  \ .
\end{align*}
 \end{enumerate}
\end{lemma}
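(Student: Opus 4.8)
The plan is to reduce each of the eight assertions to an identity of integers modulo two and verify it by substituting the definition of $\mho_{j_1,j_2}$ together with a few elementary degree-bookkeeping facts. First I would assemble the toolbox. From \eqref{degreeofmu} one has $\|\mu_{l+1}(a_i,\dots,a_{i+l})\| \equiv \maltese_i^{i+l}+1$; from the degree relation $\mu(f(b_1,\dots,b_m)) = \sum_j \mu(b_j) + \deg f - m$ for cochains in $CH^*(A)$ one gets $\|f(a_{j_1},\dots,a_{j_1+m-1})\| \equiv \maltese_{j_1}^{j_1+m-1} + \deg f - 1$; since $\beta$ has degree $+1$, every component $\beta_{i,l}(f)$ and $(\beta(f))_l$ has Hochschild degree $\deg f + 1$; and from the proof of Theorem \ref{CupProductChainMap} we already know $\deg(f\cup_{k,j_1,j_2}g) = \deg f + \deg g$. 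Beyond these I would only use additivity $\maltese_1^{b} = \maltese_1^{a} + \maltese_{a+1}^{b}$ and the cancellation $2x\equiv 0$; in particular the identity $\maltese_1^{i-1}+\maltese_{j}^{i-1}\equiv\maltese_1^{j-1}$ (for $j\le i$) will be invoked repeatedly.

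Next I would dispatch the insertion identities, parts 5--7, which describe how replacing the block $a_i,\dots,a_{i+l}$ by $\mu_{l+1}(a_i,\dots,a_{i+l})$ changes the partial sums $\maltese_1^{j_1-1}$ and $\maltese_1^{j_2-1}$ inside $\mho$. The only relevant datum is the location of $i$ relative to the two markers: an insertion strictly before a marker shifts that marker by $l$ and adds $1$ to its partial sum (via $\|\mu_{l+1}(\cdots)\|\equiv\maltese_i^{i+l}+1$), while an insertion at or after a marker leaves it unchanged. Thus part 7 ($i\ge j_2+n$) is immediate; part 6 ($j_1+m\le i\le j_2-1$) moves only the $j_2$-marker, producing the extra $\deg g-1$; part 5 ($i<j_1$) moves both, producing $\deg f+\deg g$. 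The single-$\beta$ identities, parts 1 and 3, are the same computation overlaid with the degree shift $\deg f\mapsto\deg f+1$ (resp. $\deg g\mapsto\deg g+1$) from $\beta_{\cdot,l}$ and with the extra sign $(-1)^{\maltese_1^{i-1}}$ built into $\beta_{i,l}$; substituting and regrouping yields the stated right-hand sides.

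For the remaining identities, parts 2, 4 and 8, I would substitute the two remaining degree facts directly. In parts 2 and 4 one replaces the $f$- (resp. $g$-)slot of $\mho$ by the whole component $(\beta(f))_l$ of degree $\deg f+1$, with $i$ now ranging over the $l+1$ slots \emph{inside} the new operation; after inserting $\deg(\beta(f))_l=\deg f+1$ and using $\maltese_1^{i-1}+\maltese_{j_1}^{i-1}\equiv\maltese_1^{j_1-1}$ to absorb the positional correction $(\deg f-1)\maltese_{j_1}^{i-1}$, the two sides coincide. In part 8 one substitutes $\deg(f\cup_{k,j_1,j_2}g)=\deg f+\deg g$ and rewrites the partial sum $\maltese_1^{j_1-1}$ of the shifted list $a_i,\dots,a_{m+n+k+i-1}$ as $\maltese_i^{j_1+i-2}\equiv \maltese_1^{(j_1+i-1)-1}+\maltese_1^{i-1}$; collecting the coefficient of $\maltese_1^{i-1}$ gives $2\deg f+2\deg g-3\equiv 1$, which is exactly the leftover $\maltese_1^{i-1}$ on the right.

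I expect the main obstacle to be the bookkeeping in parts 2 and 4 rather than any conceptual difficulty: here one compares the sign produced by applying $\beta$ to the whole cochain and then cupping against the sign of cupping first and then inserting a $\mu_{l+1}$ inside the $f$- or $g$-block, and the delicate point is that collapsing inside that block simultaneously raises the relevant degree by one (the $\beta$) and introduces the term $(\deg f-1)\maltese_{j_1}^{i-1}$ depending on where inside the block the insertion occurs. Keeping these two effects consistent, and tracking in every case whether each of $j_1,j_2$ lies before or after the insertion, is where sign errors are most likely; once the toolbox above is in place, each verification is a short, mechanical mod-two computation.
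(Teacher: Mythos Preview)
Your proposal is correct and follows essentially the same approach as the paper: direct mod-two verification by substituting the definition of $\mho_{j_1,j_2}$ together with the degree facts $\deg\beta_{\cdot,l}(f)=\deg f+1$, $\|\mu_{l+1}(\cdots)\|\equiv\maltese_i^{i+l}+1$, and $\deg(f\cup_{k,j_1,j_2}g)=\deg f+\deg g$. The only difference is organizational---you group the eight items by type (insertion identities 5--7, then 1/3, then 2/4/8) whereas the paper runs through them in order---but the underlying computations are identical.
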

\begin{proof}
\begin{enumerate}[1.]
\item One checks that 
\begin{align*}
 &\mho_{j_1,j_2+l}(\beta_{i-j_1+1,l}(f),g,a_1,\dots,a_{m+n+d}) \\
 &\equiv (\deg \beta_{i-j_1+1,l}(f)-1)\maltese_1^{j_1-1} + (\deg g-1)(\maltese_1^{j_2-1}+\deg \beta_{i-j_1+1,l}(f)) \\
 &\equiv \maltese_1^{j_1-1} + (\deg f-1)\maltese_1^{j_1-1} + (\deg g-1) (\maltese_1^{j_2-1}+\deg f+1) \\
 &\equiv \maltese_1^{j_1-1} + (\deg f-1)\maltese_1^{j_1-1} \\ 
 &\phantom{booooooooooooo} + (\deg g-1) (\maltese_1^{i-1} + \|\mu_{l+1}(a_i,\dots,a_{i+l})\| + \maltese_{i+l+1}^{j_2-1}+\deg f) \\
 &\equiv \maltese_1^{j_1-1} + \mho_{j_1,j_2}(f,g,a_1,\dots,a_{i-1},\mu_{l+1}(a_i,\dots,a_{i+l}),a_{i+l+1},\dots,a_{m+n+d}) \ . 
\end{align*}
\item We reformulate
\begin{align*}
 &\mho_{j_1,j_2+l}((\beta(f))_l,g,a_1,\dots,a_{m+n+d}) \\
 &\equiv \maltese_1^{j_1-1}+\deg g -1 + (\deg f-1)\maltese_1^{j_1-1} + (\deg g-1) (\maltese_1^{j_2-1}+\deg f) \\
 &\equiv \maltese_1^{j_1-1}+\deg g -1 + (\deg f-1)\maltese_{j_1}^{i-1}  \\
 &\phantom{booooooooooooo} +  (\deg f-1)\maltese_1^{i-1} + (\deg g-1) (\maltese_1^{j_2-1}+\deg f) \\
 &\equiv \maltese_1^{j_1-1}+\deg g -1 + (\deg f-1)\maltese_{j_1}^{i-1}+ \mho_{i,j_2+l}(f,g,a_1,\dots,a_{m+n+d}) \ .
\end{align*}

\item We compute
\begin{align*}
  &\mho_{j_1,j_2}(f,\beta_{i-j_2+1,l}(g),a_1,\dots,a_{m+n+d}) \\
  &=(\deg f-1) \maltese_1^{j_1-1} + (\deg \beta_{i-j_2+1,l}(g)-1)(\maltese_1^{j_2-1}+\deg f) \\
  &\equiv \deg f +\maltese_1^{j_2-1} + (\deg f-1) \maltese_1^{j_1-1} + (\deg g-1)(\maltese_1^{j_2-1}+\deg f) \\
  &\equiv \deg f +\maltese_1^{j_2-1} + \mho_{j_1,j_2}(f,g,a_1,\dots,a_{i-1},\mu_{l+1}(a_i,\dots,a_{i+l}),a_{i+l+1},\dots,a_{m+n+d}) \ .
\end{align*}	
\item Modulo two, it holds that
\begin{align*}
 &\mho_{j_1,j_2}(f,\beta_{i-j_2+1,l}(g),a_1,\dots,a_{m+n+d}) \\
 &\equiv \deg f +\maltese_1^{j_2-1} + (\deg f-1) \maltese_1^{j_1-1} + (\deg g-1)(\maltese_1^{j_2-1}+\deg f) \\
 &\equiv \deg f +\maltese_1^{j_2-1} + (\deg g-1) \maltese_{j_2}^{i-1} \\
 &\phantom{booooooooooooo} +  (\deg f-1)\maltese_1^{i-1} + (\deg g-1) (\maltese_1^{i-1}+\deg f) \\
 &\equiv \deg f +\maltese_1^{j_2-1} + (\deg g-1) \maltese_{j_2}^{i-1} + \mho_{j_1,i}(f,g,a_1,\dots,a_{m+n+d}) \ .
\end{align*}
\item By a simple computation, we obtain \begin{align*}
 &\mho_{j_1,j_2}(f,g,a_1,\dots,a_{i-1},\mu_{l+1}(a_i,\dots,a_{i+l}),a_{i+l+1},\dots,a_{m+n+d}) \\
&\equiv (\deg f-1)(\maltese_1^{j_1-1}+1)+(\deg g-1)(\maltese_1^{j_2+l-1}+\deg f+1) \\
&\equiv \deg f + \deg g + \mho_{j_1+l,j_2+l}(f,g,a_1,\dots,a_{m+n+d}) \ .
\end{align*}
\item follows in strict analogy with the fifth part of this lemma.
\item is obvious.
\item Since $\deg (f\cup_{k,j_1,j_2} g) = \deg f+\deg g$, we deduce
\begin{align*}
       &(\deg (f \cup_{k,j_1,j_2} g)-1)+\mho_{j_1,j_2}(f,g,a_i,\dots,a_{m+n+k+i-1}) \\
       &\equiv (\deg f + \deg g -1)\maltese_1^{i-1} + (\deg f-1)\maltese_i^{j_1+i-1} + (\deg g-1)(\maltese_i^{j_2+i-1}+\deg f) \\
       &\equiv \maltese_1^{i-1} + (\deg f-1)\maltese_1^{j_1+i-1} + (\deg g-1)(\maltese_1^{j_2+i-1}+\deg f) \\
       &\equiv \maltese_1^{i-1} + \mho_{j_1+i,j_2+i}(f,g,a_1,\dots,a_{m+n+d}) \ .
\end{align*}
\end{enumerate}
\end{proof}

\begin{remark} \index{Gerstenhaber algebra} 
 Getzler and Jones have shown in \cite{GetzlerJonesOperads}, generalizing a classic result of Gerstenhaber from \cite{Gerstenhaber}, that the cup product on the Hochschild cohomology of an $A_\infty$-algebra can always be extended to a Gerstenhaber algebra structure. Tradler has shown in \cite{TradlerBV} that in the presence of a certain extra structure, a so-called $\infty$-inner product, this Gerstenhaber algebra structure can be shown to arise from a Batalin-Vilkovisky algebra structure.
\end{remark}
  
  \section{The length filtration of a Hochschild chain complex and its spectral sequence}
\label{SectionLengthFiltration}

Throughout this section, we let $(A, (\mu_d)_{d\in \NN})$ be an $A_\infty$-algebra over a commutative ring $R$ and let $(M,(\mu^M_{r,s})_{r,s \in \NN_0})$ be an $A_\infty$-bimodule over $A$.

\begin{definition}
 For every $m \in \NN_0$ we define
 $$F_mCH_*(A;M) := \bigoplus_{i=0}^m M \otimes A^{\otimes i} \subset CH_*(A;M) \ . $$
 For formal reasons, we additionally define $F_{-m}CH_*(A;M):= \{0\}$ for every $m \in \NN$. 
 Every $F_mCH_*(A;M)$ is a graded submodule of $CH_*(A;M)$. It obviously holds that $F_mCH_*(A;M) \subset F_{m+1}CH_*(A;M)$ for every $m \in \NN_0$ and that 
 $$CH_*(A;M) = \bigcup_{m=0}^\infty F_mCH_*(A;M) \ . $$
 The filtration 
 $$F_0CH_*(A;M) \subset F_1 CH_*(A;M) \subset \dots \subset F_mCH_*(A;M) \subset \dots $$
 is called \emph{the length filtration of $CH_*(A;M)$. }
\end{definition}

We again write the Hochschild differential as $b = \sum_{l=0}^\infty b_{l+1}$, where every $b_{l+1}$ is constructed using only those $A_\infty$-operations with precisely $l+1$ inputs. Considering the length filtration, one observes that 
\begin{equation}
\label{EqDecreasbi}
b_{l+1}(F_mCH_*(A;M)) \subset F_{m-l}CH_*(A;M) \quad \forall l,m \in \NN_0 \ .
\end{equation}
This particularly implies that 
$$b(F_mCH_*(A;M)) \subset F_mCH_*(A;M) \ , $$
i.e. that $F_m(CH_*(A;M))$ is a subcomplex of $CH_*(A;M)$ for every $m \in \NN_0$. 

\begin{definition}
For $m \in \NN_0$ we define
 \begin{equation*}
  \pi_m: F_mCH_*(A;M) \to  M \otimes A^{\otimes m} \ , 
 \end{equation*}
as the $\ZZ$-linear extension of 
\begin{equation*}
 \pi_m(a_0 \otimes a_1 \otimes \dots \otimes a_k) = \begin{cases}
                                                     a_0 \otimes a_1 \otimes \dots \otimes a_k & \text{if} \ \ k = m \ , \\
                                                     \ 0 & \text{if} \ \ k \neq m \ ,
                                                    \end{cases} \quad \forall \ a_0 \in M \ , \ \  a_1,\dots,a_k \in A \ . 
\end{equation*}
\end{definition}

Apparently, $\pi_m$ is surjective for every $m \in \NN_0$ and
\begin{equation}
\label{kerpim}
\ker \pi_m = F_{m-1}CH_*(A;M) \ . 
\end{equation}

\begin{prop}
\label{Proppimchainmap}
For every $m \in \NN_0$, the map $\pi_m: (F_mCH_*(A;M),b) \to (M \otimes A^{\otimes m},b_1)$ is a chain map. 
\end{prop}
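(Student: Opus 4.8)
The plan is to reduce the claim to the length-decreasing property of the Hochschild differential recorded in \eqref{EqDecreasbi}. Concretely, I would verify directly that $\pi_m \circ b = b_1 \circ \pi_m$ on $F_mCH_*(A;M)$, exploiting that $\pi_m$ isolates the top-length summand $M \otimes A^{\otimes m}$ and that, among the pieces $b_{l+1}$ of $b = \sum_{l=0}^\infty b_{l+1}$, only $b_1$ preserves the length. Here $b_{l+1}$ collects those summands of $b$ built from $A_\infty$-operations with exactly $l+1$ inputs, so \eqref{EqDecreasbi} gives $b_{l+1}(M \otimes A^{\otimes i}) \subseteq M \otimes A^{\otimes(i-l)}$, interpreted as zero when $i < l$.

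First I would fix $x \in F_mCH_*(A;M)$ and write $x = \sum_{i=0}^m x_i$ with $x_i \in M \otimes A^{\otimes i}$, so that $\pi_m(x) = x_m$ by definition of $\pi_m$. Then I project $b(x)$. For each index $i < m$ one has $b(x_i) \in F_iCH_*(A;M) \subseteq F_{m-1}CH_*(A;M) = \ker \pi_m$ by \eqref{kerpim}, so these terms contribute nothing after applying $\pi_m$. For $i = m$ I split $b(x_m) = b_1(x_m) + \sum_{l \geq 1} b_{l+1}(x_m)$; the terms with $l \geq 1$ lie in $F_{m-1}CH_*(A;M) = \ker \pi_m$, while $b_1(x_m)$ already lies in $M \otimes A^{\otimes m}$, on which $\pi_m$ restricts to the identity. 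Hence $\pi_m(b(x)) = \pi_m(b(x_m)) = b_1(x_m) = b_1(\pi_m(x))$, which is exactly the chain-map identity. Since $\pi_m$ is a degree-preserving $R$-module homomorphism and $b$ lowers degree by one by Theorem \ref{HochschildDifferential}, while $b_1$ likewise lowers the degree on $M \otimes A^{\otimes m}$ by one, this gives a chain map of the asserted type.

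There is no genuine computational obstacle in this argument: no sign bookkeeping survives, because $\pi_m$ acts as the identity on the single surviving summand $b_1(x_m)$, so the signs built into $b_1$ are simply carried through unchanged. The only points requiring care are the clean use of \eqref{EqDecreasbi} to confine all higher components $b_{l+1}$ with $l \geq 1$ to strictly lower filtration level, together with the identification $\ker \pi_m = F_{m-1}CH_*(A;M)$ from \eqref{kerpim}; once these are in place the commutation is immediate and the proof concludes.
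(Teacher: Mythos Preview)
Your proof is correct and follows essentially the same approach as the paper's: both arguments reduce the chain-map identity to the length-decreasing property \eqref{EqDecreasbi} together with the identification $\ker \pi_m = F_{m-1}CH_*(A;M)$ from \eqref{kerpim}, the only cosmetic difference being that you decompose $x$ into its length components first while the paper splits the cases $a \in M \otimes A^{\otimes m}$ versus $a \in F_{m-1}CH_*(A;M)$ after analyzing each $\pi_m \circ b_{l+1}$.
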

\begin{proof}
Combining \eqref{EqDecreasbi} and \eqref{kerpim} yields
$$\pi_m(b_{l+1}(a)) = 0 \qquad \forall l \geq 1 \ , \ \ a \in F_mCH_*(A;M) \ . $$
Moreover, it obviously holds that 
$$\pi_m(b_1(a))= \begin{cases}
                  b_1(a) & \text{if} \ \ a \in M \otimes A^{\otimes m} \ , \\
                  \ 0 & \text{if} \ \ a \in F_{m-1}CH_*(A;M) \ . 
                 \end{cases} $$
For $a \in F_{m-1}(CH_*(A;M))$ we derive from this computation and \eqref{kerpim} that
$$ (\pi_m \circ b)(a) = \sum_{l=1}^\infty \pi_m(b_l(a)) = 0 = b_1(0) = (b_1\circ \pi_m)(a) \ . $$
For $a \in A^{\otimes (m+1)}$ we derive 
$$(\pi_m \circ b)(a) = \sum_{l=1}^\infty \pi_m(b_l(a)) = \pi_m(b_1(a)) = b_1(a)= (b_1\circ \pi_m)(a) \ . $$
Since $\pi_m$ is obviously a group homomorphism and since $F_m(CH_*(A;M))$ decomposes as $$F_mCH_*(A;M) = \left(M \otimes A^{\otimes m}\right) \oplus F_{m-1}CH_*(A;M)  $$ as a free abelian group, the previous computations show that $\pi_m$ is a chain map. 
\end{proof}

Together with the above considerations, Proposition \ref{Proppimchainmap} implies:

\begin{cor}
\label{Corbarpim}
 For every $m \in \NN_0$, the map $\pi_m:F_mCH_*(A;M) \to M \otimes A^{\otimes m} $ induces an isomorphism of chain complexes
$$\bar{\pi}_m: \quot{F_mCH_*(A;M)}{F_{m-1}CH_*(A;M)} \stackrel{\cong}{\longrightarrow} M \otimes A^{\otimes m} \ . $$
\end{cor}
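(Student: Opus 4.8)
The plan is to apply the first isomorphism theorem in the category of chain complexes, leaning entirely on the three facts already in hand: that $\pi_m$ is surjective, that $\ker \pi_m = F_{m-1}CH_*(A;M)$ by \eqref{kerpim}, and that $\pi_m$ is a chain map by Proposition \ref{Proppimchainmap}. Everything reduces to packaging these correctly at the level of quotient complexes.

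First I would observe that, since $F_{m-1}CH_*(A;M)$ is a subcomplex of $CH_*(A;M)$ and hence of $F_mCH_*(A;M)$, the Hochschild differential $b$ descends to a well-defined differential $\bar b$ on the quotient $F_mCH_*(A;M)/F_{m-1}CH_*(A;M)$. This makes the quotient a chain complex, with its grading inherited from $F_mCH_*(A;M)$, and renders the canonical projection $q: F_mCH_*(A;M) \to F_mCH_*(A;M)/F_{m-1}CH_*(A;M)$ a degree-preserving chain map satisfying $\bar b \circ q = q \circ b$.

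Next, because $\ker \pi_m = F_{m-1}CH_*(A;M)$, the homomorphism $\pi_m$ vanishes on $F_{m-1}CH_*(A;M)$ and therefore factors uniquely through $q$, yielding a graded $R$-module homomorphism $\bar\pi_m$ with $\bar\pi_m \circ q = \pi_m$. The first isomorphism theorem then shows that $\bar\pi_m$ is an isomorphism of graded $R$-modules: it is surjective because $\pi_m$ is surjective, and injective because we have quotiented out precisely by $\ker \pi_m$.

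It then remains to verify that $\bar\pi_m$ is a chain map with respect to $\bar b$ on the source and $b_1$ on the target. This I would deduce formally from the surjectivity of $q$: combining $\bar\pi_m \circ q = \pi_m$, the relation $\bar b \circ q = q \circ b$, and the chain-map identity $\pi_m \circ b = b_1 \circ \pi_m$ from Proposition \ref{Proppimchainmap}, one computes $\bar\pi_m \circ \bar b \circ q = \bar\pi_m \circ q \circ b = \pi_m \circ b = b_1 \circ \pi_m = b_1 \circ \bar\pi_m \circ q$, and cancels the epimorphism $q$ on the right to obtain $\bar\pi_m \circ \bar b = b_1 \circ \bar\pi_m$. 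I do not expect any genuine obstacle: the whole argument is the first isomorphism theorem dressed up for complexes, and the only point demanding a line of care is transferring the chain-map property across the quotient, which the surjectivity of $q$ handles cleanly.
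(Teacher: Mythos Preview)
Your proposal is correct and takes essentially the same approach as the paper, which simply states that the corollary follows from Proposition~\ref{Proppimchainmap} together with the surjectivity of $\pi_m$ and the identification $\ker\pi_m = F_{m-1}CH_*(A;M)$. You have merely spelled out the first isomorphism theorem argument that the paper leaves implicit.
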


With every filtered chain complex, one associates a spectral sequence which converges under good circumstances against the homology of the original chain complex. We will next discuss the spectral sequence of the length filtration of $CH_*(A;M)$. For details on the following algebraic constructions we refer to \cite[Section XI.3]{MacLane} or \cite[Section 2.2]{McCleary}. \bigskip 

We define a family of $R$-bimodules $\left\{E_{p,q}\right\}_{p,q \in \ZZ}$ by putting
$$E_{p,-q} := F_pCH_{p-q}(A;M) \qquad \forall p,q \in \NN_0 \ $$
and $E_{p,q}:=0$ if $(p,q) \notin \NN_0 \times (-\NN_0)$. Then 
 $$CH_n(A;M) = \bigoplus_{\stackrel{p,q \in \NN_0}{p-q=n}} E_{p,-q} \qquad \forall n \in \ZZ \ . $$
 For the purpose of constructing the spectral sequence of the filtration, one first defines the sets $Z^r_{p,q} \subset E_{p,q}$ of elements which survive to the $r$-th page of the spectral sequence and the sets $B^r_{p,q} \subset E_{p,q}$ which induce boundaries on the $r$-th page of the spectral sequence. In the present case of a spectral sequence of a filtration, they are given as follows (see \cite[Section 2.2]{McCleary} for details): \bigskip 
 
 Let $r \in \NN_0$. For $p,q \in \NN_0$ we consider 
\begin{align*}
Z_{p,-q}^r &= \left\{x \in E_{p,q} \ | \ b(x) \in E_{p-r,-q+r-1} \right\} \\
&= \left\{x \in F_pCH_{p-q}(A;M) \ | \ b(x) \in F_{p-r}CH_{p-q-1}(A;M)  \right\} \ . 
\end{align*}
For $(p,q) \in \ZZ^2 \setminus \left(\NN_0 \times (-\NN_0)\right)$ we put $Z_{p,q}^r :=\{0\}$. Since the filtration is increasing, it is apparent that $Z_{p,q}^{r+1} \subset Z_{p,q}^{r}$ for all $p,q\in \ZZ$ and $r \in \NN_0$. 

Moreover, if we put $Z^r_{p,*} = \bigoplus_{q \in \ZZ} Z^r_{p,q}$ for every $p \in \NN_0$ and $r \in \NN$, it will follow that 
\begin{align*}
Z^r_{p,*} &= \left\{ x \in F_pCH_*(A;M) \ | \ b(x) \in F_{p-r}CH_*(A;M) \right\} \\
	  &= \Bigl\{ x \in M \otimes A^{\otimes p} \ \Bigl| \ \sum_{l=1}^{r} b_l(x) = 0 \Bigr. \Bigr\}\oplus F_{p-1}CH_*(A;M) \ , 
\end{align*}
where we used \eqref{EqDecreasbi} for the last identification. For $p,q \in \NN_0$ we further put 
\begin{align*}
 B^r_{p,-q} &= \left\{ x \in E_{p,q} \ | \ x \in b\left(E_{p+r,-q-r+1}\right) \right\} \\
 &= \left\{ x \in F_{p}CH_{p-q}(A;M) \ | \ x \in b\left(F_{p+r}CH_{p-q+1}(A;M)\right) \right\} \ .
\end{align*}
For $(p,q) \in \ZZ^2 \setminus \left(\NN_0 \times (-\NN_0)\right)$ we put $B_{p,q}^r :=\{0\}$. Since the filtration is increasing, it is apparent that $B_{p,q}^r \subset B_{p,q}^{r+1}$ for every $p,q\in \ZZ$, $r \in \NN_0$. We further define
\begin{align*}
 Z^\infty_{p,q} &= \left\{ x \in E_{p,q} \ |  \ b(x)=0 \right\}= \left\{x \in F_pCH_{p-q}(A;M) \ | \ b(x)=0 \right\}  \ , \\
 B^\infty_{p,q} &= \left\{ x \in E_{p,q} \ | x \in \im b \right\} = \left\{x \in F_pCH_{p-q}(A;M) \ | \ x \in \im b \right\} \ .
\end{align*}
Since $b$ is a differential, it holds that $B^\infty_{p,q} \subset Z^\infty_{p,q}$ for every $p,q \in \NN_0$. 

\begin{lemma}
\label{LemmaFiltConv}
 The length filtration is weakly convergent, i.e. it holds that $$Z^\infty_{p,-q} = \bigcap_{r =0}^\infty Z^r_{p,-q}$$ for every $p,q \in \NN_0$. 
\end{lemma}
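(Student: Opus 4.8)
The plan is to prove the two inclusions separately, the key point being that the length filtration is \emph{bounded below}, i.e. that $F_{-m}CH_*(A;M) = \{0\}$ for every $m \in \NN$. Recall that, unwinding the definitions of the $E_{p,-q}$, we have $Z^r_{p,-q} = \{x \in F_pCH_{p-q}(A;M) \mid b(x) \in F_{p-r}CH_{p-q-1}(A;M)\}$ and $Z^\infty_{p,-q} = \{x \in F_pCH_{p-q}(A;M) \mid b(x) = 0\}$.

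First I would dispatch the inclusion $Z^\infty_{p,-q} \subseteq \bigcap_{r} Z^r_{p,-q}$, which is immediate. If $x \in Z^\infty_{p,-q}$, then $x \in F_pCH_{p-q}(A;M)$ and $b(x) = 0$. Since the zero element lies in every submodule $F_{p-r}CH_{p-q-1}(A;M)$, the element $x$ satisfies the defining condition of $Z^r_{p,-q}$ for every $r \in \NN_0$, and therefore lies in the intersection.

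The reverse inclusion is where the boundedness below enters. Given $x \in \bigcap_r Z^r_{p,-q}$, I would specialize to the index $r = p+1$: by definition of membership in $Z^{p+1}_{p,-q}$ we then have $b(x) \in F_{p-(p+1)}CH_{p-q-1}(A;M) = F_{-1}CH_{p-q-1}(A;M) = \{0\}$, invoking the convention for negative filtration indices. Hence $b(x) = 0$, so $x \in Z^\infty_{p,-q}$, which proves the claim.

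There is no real obstacle here: the assertion is a purely formal consequence of the filtration being exhaustive and bounded below by zero, which forces the nested conditions defining the $Z^r_{p,-q}$ to stabilize after finitely many steps (already at $r = p+1$, independently of $q$). The only care needed is to invoke the convention $F_{-m}CH_*(A;M) = \{0\}$ explicitly, since this is exactly what makes the infinite intersection collapse onto the cycles $Z^\infty_{p,-q}$.
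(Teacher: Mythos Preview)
Your proof is correct and follows essentially the same approach as the paper: both arguments reduce to the observation that $F_{-m}CH_*(A;M) = \{0\}$ for $m \in \NN$, so that membership in $Z^r_{p,-q}$ for $r > p$ already forces $b(x) = 0$. The paper packages this as a direct computation of the intersection $\bigcap_r F_{p-r}CH_{p-q-1}(A;M) = \{0\}$ rather than arguing by double inclusion, but the content is the same.
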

\begin{proof}
 We compute that
 \begin{align*}
  \bigcap_{r=0}^\infty Z^r_{p,-q} &= \bigcap_{r =0}^\infty \left\{b \in F_{p}CH_{p-q}(A;M) \ | \ b(x) \in F_{p-r}CH_{p-q-1}(A;M) \right\} \\
				   &= \Bigl\{b \in F_pCH_{p-q}(A;M) \ \Bigl| \ b(x) \in \bigcap_{r=0}^\infty F_{p-r}CH_{p-q-1}(A;M)\Bigr. \Bigr\} \ . 
 \end{align*}
By definition, it holds that $F_{p-r}CH_{p-q-1}(A;M) = \{0\}$ whenever $r > p$. Hence 
$$\bigcap_{r=0}^\infty F_{p-r}CH_{p-q-1}(A;M)= \{0\} \quad \forall p,q \in \NN_0 \ , $$
which implies
$$\bigcap_{r=0}^\infty Z^r_{p,-q} = \left\{b \in F_pCH_{p-q}(A;M) \ | \ b(x)=0 \right\} = Z^\infty_{p,-q} \ . $$
\end{proof}

In particular, it holds that $Z^\infty_{p,q} \subset Z^r_{p,q}$ for all $p,q \in \ZZ$ and $r \in \NN_0$. One further checks without difficulties that $B^r_{p,q} \subset B^\infty_{p,q} $ for every $p,q \in \ZZ$ and $r \in \NN_0$. 

Summarizing, we have defined an increasing bigraded family of $R$-modules 
$$ B^0_{p,q}\subset  B^1_{p,q}  \subset \dots \subset B^r_{p,q} \subset \dots \subset B^\infty_{p,q} \subset Z^\infty_{p,q} \subset \dots \subset Z^r_{p,q} \subset \dots \subset Z^1_{p,q} \subset Z^0_{p,q} \ .$$
Using these sets, one constructs the spectral sequence of $\left\{F_pCH_*(A;M) \right\}_{p\in\NN_0}$. The most important result for our purposes is the following:

\begin{theorem}
 The spectral sequence of the length filtration converges against $HH_*(A;M)$. 
\end{theorem}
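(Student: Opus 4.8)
The plan is to invoke the standard machinery of the spectral sequence of a filtered complex (see \cite[Section XI.3]{MacLane} or \cite[Section 2.2]{McCleary}) and to check that the two structural inputs it requires are exactly the ones already assembled: the \emph{weak convergence} of Lemma \ref{LemmaFiltConv} and the exhaustiveness $CH_*(A;M)=\bigcup_{m}F_mCH_*(A;M)$. Recall that the limit term of the spectral sequence is built from the bigraded objects $Z^r_{p,q}$, $B^r_{p,q}$, $Z^\infty_{p,q}$, $B^\infty_{p,q}$ introduced above by
\begin{equation*}
 E^\infty_{p,-q} = \frac{\bigcap_{r} Z^r_{p,-q}}{\bigl(\bigcup_{r} B^r_{p,-q}\bigr) + \bigcap_{r} Z^r_{p-1,-q+1}} \ ,
\end{equation*}
and that ``convergence against $HH_*(A;M)$'' means precisely that $E^\infty_{p,-q}$ is naturally isomorphic to the associated graded $\operatorname{gr}_p HH_{p-q}(A;M)$ of the filtration induced on homology. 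So the task is to rewrite the two ``limits'' here in terms of $Z^\infty$ and $B^\infty$, and then to match the resulting quotient with the associated graded.

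First I would rewrite numerator and denominator. By Lemma \ref{LemmaFiltConv} the filtration is weakly convergent, so $\bigcap_r Z^r_{p,-q} = Z^\infty_{p,-q}$ for all $p,q \in \NN_0$, which handles the numerator and the second summand of the denominator. For the first summand I would use exhaustiveness: if $x \in B^\infty_{p,-q}$, then $x = b(y)$ for some $y \in CH_{p-q+1}(A;M)$, and since the filtration is exhaustive $y$ lies in $F_{p+r}CH_{p-q+1}(A;M)$ for $r$ large enough, whence $x \in B^r_{p,-q}$. Together with the inclusion $B^r_{p,-q}\subset B^\infty_{p,-q}$ noted in the text, this gives $\bigcup_r B^r_{p,-q} = B^\infty_{p,-q}$. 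Therefore
\begin{equation*}
 E^\infty_{p,-q} = \frac{Z^\infty_{p,-q}}{B^\infty_{p,-q} + Z^\infty_{p-1,-q+1}} \ ,
\end{equation*}
where $Z^\infty_{p-1,-q+1}\subset Z^\infty_{p,-q}$ is immediate from $F_{p-1}CH_*(A;M)\subset F_pCH_*(A;M)$.

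It remains to identify the right-hand side with $\operatorname{gr}_p HH_{p-q}(A;M)$. Writing $n=p-q$, I would define the induced filtration on homology by
\begin{equation*}
 F_pHH_n(A;M) := \im\bigl(H_n(F_pCH_*(A;M)) \to HH_n(A;M)\bigr) = \frac{Z^\infty_{p,-q}+\im b}{\im b} \subset HH_n(A;M) \ ,
\end{equation*}
so that $\operatorname{gr}_p HH_n(A;M) = F_pHH_n(A;M)/F_{p-1}HH_n(A;M)$, the subquotient $F_{p-1}HH_n$ being the image of $(Z^\infty_{p-1,-q+1}+\im b)/\im b$. Applying the second isomorphism theorem to the inclusion $Z^\infty_{p,-q} \hookrightarrow Z^\infty_{p,-q}+\im b$ and using $Z^\infty_{p-1,-q+1}\subset Z^\infty_{p,-q}$ together with the modular law turns this associated graded into
\begin{equation*}
 \frac{Z^\infty_{p,-q}}{Z^\infty_{p,-q}\cap\bigl(Z^\infty_{p-1,-q+1}+\im b\bigr)} = \frac{Z^\infty_{p,-q}}{Z^\infty_{p-1,-q+1} + \bigl(Z^\infty_{p,-q}\cap \im b\bigr)} \ .
\end{equation*}
Since a boundary lying in $F_pCH_{p-q}(A;M)$ is by definition exactly an element of $B^\infty_{p,-q}$, we have $Z^\infty_{p,-q}\cap \im b = B^\infty_{p,-q}$, so the displayed quotient coincides with the expression for $E^\infty_{p,-q}$ from the previous paragraph. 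This yields the natural isomorphism $E^\infty_{p,-q}\cong \operatorname{gr}_p HH_{p-q}(A;M)$ and hence the asserted convergence.

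The main obstacle I anticipate is not any single hard computation but the bookkeeping forced by the fact that the length filtration, although bounded below ($F_{-m}CH_*(A;M)=\{0\}$) and exhaustive, need \emph{not} be bounded in a fixed total degree: if $A$ carries generators of nonpositive reduced index, then arbitrarily long tensors can contribute to $CH_n(A;M)$. Consequently one cannot appeal to the strongest ``bounded filtration'' convergence theorem and must instead argue through the weak-convergence version, i.e.\ genuinely use Lemma \ref{LemmaFiltConv} together with the identity $\bigcup_r B^r = B^\infty$ rather than any finite stabilization of the pages. Verifying these two identities carefully, while keeping the degree shift $n=p-q$ straight between the homological indexing and the length indexing, is where the care lies.
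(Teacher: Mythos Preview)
Your proposal is correct and follows the same approach as the paper: both identify the two hypotheses needed---weak convergence (Lemma \ref{LemmaFiltConv}) and exhaustiveness---and invoke the standard convergence theorem for filtered complexes. The only difference is that the paper cites \cite[Theorem 3.2]{McCleary} as a black box, whereas you unpack that theorem by explicitly rewriting $E^\infty_{p,-q}$ and identifying it with the associated graded of $HH_*(A;M)$; your added detail is sound and your remark that boundedness in each total degree may fail (so one really needs the weak-convergence version) is well taken.
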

\begin{proof}
 This is a straightforward application of \cite[Theorem 3.2]{McCleary}, since the length filtration of $CH_*(A;M)$ is weakly convergent by Lemma \ref{LemmaFiltConv} and exhaustive, i.e. it holds that $CH_*(A;M) = \bigcup_{p=0}^\infty F_pCH_*(A;M).$
\end{proof}

We will construct the zeroth and the first page of the spectral sequence of the length filtration in detail. 
Considering the zeroth page, we observe that 
$$Z^0_{p,q}=E_{p,q}=B^0_{p,q} \quad \forall p,q \in \ZZ \ . $$
The modules constituting the zeroth page are thus given by 
\begin{equation*}
 E^0_{p,-q} := \quot{E_{p,-q}}{E_{p-1,-q}} = \quot{F_pCH_{p-q}(A;M)}{F_{p-1}CH_{p-q}(A;M)} \ . 
\end{equation*}
and the Hochschild differential $b$ induces a quotient differential 
\begin{equation*}
 \bar{b}^0: E^0_{p,-q} \to E^0_{p,-q-1} \ .
\end{equation*}
One checks that $\bar{b}^0$ is a differential of degree $-1$ on the complex $E^0_{p,*}$ for every $p \in \NN$.

Applying Corollary \ref{Corbarpim} and checking the degrees involved, one derives that for all $p,q \in \ZZ$ there is an isomorphism
\begin{equation}
\label{E0ChainIso}
E^0_{p,-q} \cong \left(M \otimes A^{\otimes p}\right)_q = \bigoplus_{q_0+\dots+q_p=q} M_{q_0} \otimes A_{q_1} \otimes \dots \otimes A_{q_p} \ , 
\end{equation}
which commutes with the differentials $\bar{b}^0$ and $b_1$. The $R$-modules which constitute the  first page of the spectral sequence of the length filtration are now given by  
$$E^1_{p,-q} := H_{-q}\left(E^0_{p,*} \ , \ \bar{b}^0\right) \quad \forall p, q\in \NN_0$$
and putting $E^1_{p,q} := \{0\}$ for every $(p,q) \in \ZZ^2  \setminus\left( \NN_0 \times (-\NN_0)\right)$. 

Since the isomorphism in \eqref{E0ChainIso} is an isomorphism of chain complex, it thus holds that 
\begin{equation*}
 E^1_{p,-q} \cong H_q\left( M \otimes A^{\otimes p},b_1\right) \quad \forall p,q \in \NN_0 \ . 
\end{equation*}

The following is a formulation of a basic result of morphisms of spectral sequences adapted to our situation: 

\begin{theorem}
\label{ThmMappingIso}
Let $A'$ be another $A_\infty$-algebra over $R$, let $M'$ be an $A_\infty$-bimodule over $A'$ and put $F_\infty CH_*(A;M) := CH_*(A;M)$ and $F_\infty CH_*(A';M'):=CH_*(A',M')$. 

Let $m \in \NN_0\cup \{\infty\}$ and let $g:F_mCH_*(A;M) \to F_mCH_*(A';M')$ be a chain map with 
 $$f(F_pCH_*(A,M))\subset F_pCH_*(A',M') \qquad \forall p \in \NN_0 \ \ \text{with} \ \ p \leq m \ . $$
 Let $f_0: E^0_{p,*} \to E^0_{p,*}$ be the chain map induced by $f$ on the zeroth page of the spectral sequences of the length filtrations. If $f_0$ induces an isomorphism in homology $$[f_0]: E^1_{p,*} \stackrel{\cong}{\longrightarrow} E^1_{p,*}$$ for every $p \in \NN_0$ with $p \leq m$, then $f$ induces an isomorphism in homology 
 $$[f]:H_*(F_mCH_*(A;M),b) \stackrel{\cong}{\longrightarrow}H_*(F_mCH_*(A';M'),b) \ . $$
\end{theorem}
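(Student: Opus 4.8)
The plan is to invoke the standard comparison (``mapping'') theorem for spectral sequences: a filtration-preserving chain map which is an isomorphism on one page is an isomorphism on every later page and on the abutment, provided the filtrations are sufficiently well-behaved. Here both length filtrations are bounded below, in the sense that $F_{-1}CH_* = \{0\}$, and exhaustive, and they are weakly convergent by Lemma \ref{LemmaFiltConv}, so the associated spectral sequences converge to the homologies in question. For finite $m$ the induced filtration $F_0 \subset \dots \subset F_m$ on $F_mCH_*(A;M)$ is in addition bounded above, so its spectral sequence converges automatically; I would treat both cases uniformly, relying only on boundedness below and exhaustiveness.

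First I would record that, because $f$ respects the filtration by hypothesis, it induces for every $r$ a morphism $f^r$ from the $r$-th page of the spectral sequence of $(A,M)$ to that of $(A',M')$, commuting with the page differentials $d^r$. On the zeroth page this morphism is the map called $f_0$, and on the first page it is $[f_0]$, which is assumed to be an isomorphism for all $p \leq m$. Under the identification $E^1_{p,-q} \cong H_q(M\otimes A^{\otimes p},b_1)$ coming from Corollary \ref{Corbarpim} and \eqref{E0ChainIso}, this is precisely the map induced by $f$ on the homologies of the columns.

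Next I would run the usual induction over the pages: if $f^r$ is an isomorphism, then it is in particular an isomorphism of the complexes $(E^r_{p,*},d^r)$, hence induces an isomorphism on their homology, which is $E^{r+1}$. Starting from the assumed isomorphism on $E^1$ for $p \leq m$ (and trivially for $p > m$, where all the relevant groups vanish), this yields isomorphisms $f^r$ for every $r \geq 1$, and therefore an isomorphism on $E^\infty$ in the relevant range.

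Finally I would transfer the conclusion from $E^\infty$ to homology. By weak convergence (Lemma \ref{LemmaFiltConv}) and the cited convergence theorem, the groups $E^\infty_{p,-q}$ are identified, compatibly with $f$, with the graded pieces $\operatorname{gr}_p H_{p-q}$ of the filtered homology. Since the filtration on each homology group is bounded below, I would induct upward on $p$: the base case $F_{-1}H_n = \{0\}$ is trivial, and at each step the five lemma applied to the short exact sequence $0 \to F_{p-1}H_n \to F_pH_n \to \operatorname{gr}_p H_n \to 0$ promotes the isomorphism on the graded pieces to an isomorphism on $F_pH_n$. Exhaustiveness then gives $H_n = \operatorname{colim}_p F_pH_n$, and a colimit of isomorphisms is an isomorphism, so $[f]$ is an isomorphism. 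The main obstacle is bookkeeping the convergence correctly, and in particular verifying in the non-finite case $m = \infty$ that $E^\infty$ really computes the graded pieces of $H_*$; there one must lean on Lemma \ref{LemmaFiltConv} together with the cited convergence result rather than on a naive stabilization argument. Once that identification is secured, the upward colimit argument completes the proof without invoking any completeness hypothesis.
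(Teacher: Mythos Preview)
Your proposal is correct and follows essentially the same approach as the paper: both invoke the standard comparison (mapping) theorem for spectral sequences of filtered complexes. The paper simply cites \cite[Theorem XI.3.4]{MacLane} and notes that one applies it to the restricted length filtrations on $F_mCH_*(A;M)$ and $F_mCH_*(A';M')$, whereas you unpack the content of that theorem (induction over pages, five-lemma on the filtration of homology, colimit over $p$) and correctly flag the one genuine subtlety in the $m=\infty$ case, namely that $E^\infty$ must be identified with the associated graded of homology via weak convergence rather than naive stabilization.
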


\begin{proof}
 This are special cases of \cite[Theorem XI.3.4]{MacLane}. Here, one consider the restrictions of the length filtrations to $F_mCH_*(A;M)$ and $F_mCH_*(A';M')$ for some given $m \in \NN_0\cup \{\infty\}$.
\end{proof}

We will finally prove Theorem \ref{firstpageisenough} which is nothing but a special case of Theorem \ref{ThmMappingIso} for chain maps on $CH_*(A;M)$ which are induced by morphisms of $A_\infty$-bimodules. 

In this theorem we assume that $R=\ZZ$, $A=A'$ and that $(N,(\mu^N_{r,s})_{r,s\in\NN_0})$ with $N=M'$ are torsion-free abelian groups and consider a morphism of $A_\infty$-bimodules $$f=(f_{r,s})_{r,s \in \NN_0}:M \to N \ .$$ Remember that for every $r,s \in \NN_0$, the map $$f_{r,s}: A^{\otimes r} \otimes M \otimes A^ {\otimes s} \to N$$ defines a group homomorphism $\bfrs: CH_*(A;M) \to CH_*(A;N)$, as defined on page \ref{Defbfrs}.  

\begin{proof}[Proof of Theorem \ref{firstpageisenough}]
 We observe from the definition of the maps $$\bfrs: CH_*(A;M)\to CH_*(A;N)$$ that 
\begin{equation}
\label{bfrsfiltpreserv}
\bfrs(F_pCH_*(A;M)) \subset F_{p-r-s}CH_*(A;N) \qquad \forall p,r,s \in \NN_0 \  . 
 \end{equation}
 In particular, the map $f_*:CH_*(A;M) \to CH_*(A;N)$ is filtration-preserving, thus induces a morphism of spectral sequences between the spectral of the length filtrations of $CH_*(A;M)$ and $CH_*(A;N)$. We will denote the different pages of the former spectral sequence by $\{E_{p,-q}^r\}_{p,q,r \in \NN_0}$ and those of the latter one by $\{G_{p,-q}^r\}_{p,q,r \in \NN_0}$. 
 
 If $\pi: F_pCH_*(A;N) \to \quot{F_pCH_*(A;N)}{F_{p-1}CH_*(A;N)}$ denotes the projection, \eqref{bfrsfiltpreserv} implies that 
 $$\pi \circ \bfrs = 0 \quad \text{if} \ \ r+s>0  \ .$$
 Moreover, one observes that $$F_{p-1}CH_*(A;M) \subset \ker \left(\pi \circ \bar{f}_{0,0} \right) \ . $$
 
 Consequently, the map that $f_*$ induces between the first pages of the spectral sequences, that we will denote by $f_*^1: E^1_{p,*}\to G^1_{p,*}$ for every $p \in \NN_0$, is thus the map induced by $\pi \circ \bar{f}_{0,0}$ between the quotient complexes, denotes by 
 $$\bar{f}^1_{0,0}: \quot{F_pCH_*(A;M)}{F_{p-1}CH_*(A;M)} \to \quot{F_pCH_*(A;N)}{F_{p-1}CH_*(A;N)} \ . $$
 By Theorem \ref{ThmMappingIso}, $f_*$ induces an isomorphism of Hochschild homology groups if $\bar{f}^1_{0,0}$ induces an isomorphism in homology. To show the latter, we will write down $\bar{f}^1_{0,0}$ in greater detail. 
 Denoting the elements of the quotient complexes in square brackets, it holds by definition of $\bar{f}_{0,0}$ for every $p \in \NN_0$ that 
 \begin{equation}
 \label{bf100ex}
 \bar{f}^1_{0,0}\left(\left[m \otimes a_1 \otimes a_2 \otimes \dots \otimes a_p\right]\right) = \left[f_{0,0}(m)\otimes a_1\otimes a_2 \otimes \dots \otimes a_p \right] \ . 
 \end{equation}
 If we define $f_0:M \otimes A^{\otimes p}$ as the map that makes the following diagram commutative: 
 \begin{equation*}
  \begin{CD}
   \quot{F_pCH_*(A;M)}{F_{p-1}CH_*(A;M)} @>{\bar{f}^1_{0,0}}>> \quot{F_pCH_*(A;N)}{F_{p-1}CH_*(A;N)} \\
    @V{\bar{\pi}_m}V{\cong}V @V{\bar{\pi}_m}V{\cong}V \\
    M \otimes A^{\otimes p} @>{f_0}>> N \otimes A^{\otimes p} \ , 
   \end{CD}
 \end{equation*}
where $\bar{\pi}_m$ is the isomorphism from Corollary \ref{Corbarpim}, then one will derive from \eqref{bf100ex} that 
$$f_0(m\otimes a_1 \otimes a_2 \otimes \dots \otimes a_p) = f_{0,0}(m)\otimes a_1 \otimes a_2 \otimes \dots \otimes a_p$$
for all $m \in N$, $a_1,a_2,\dots,a_p \in A$, or in shorthand notation 
$$ f_0 = f_{0,0} \otimes \id_{A^{\otimes p}} \ . $$
By assumption, $f_{0,0}$ induces an isomorphism $$(f_{0,0})_*: H^*(M):= H^*(M,\mu^M_{0,0}) \to H^*(N,\mu^N_{0,0})=:H^*(N) \ . $$ The identity obviously induces an isomorphism on $H^*(A^{\otimes p})$ with respect to the differential induced by $\mu_1$. 

Since $M$ and $N$ are torsion-free, the K\"unneth theorem for abelian groups, see \cite[Theorem V.10.4]{MacLane} implies that there are horizontal maps, such that the rows in the following commutative diagram are short exact sequences: 
\begin{equation*}
 \xymatrix{0 \ar[r]  & H^*(M) \otimes H^*\left(A^{\otimes p}\right) \ar[r] \ar[d]^{(f_{0,0})_* \otimes \id}& H^*\left(M \otimes A^{\otimes p}\right) \ar[r] \ar[d]^{(f_0)_*} & \Tor(H^*(M),H^*(A^{\otimes p})) \ar[r] \ar[d]^{\Tor((f_{0,0})_*,\id)} & 0 \phantom{ \ . }\\
	   0 \ar[r] & H^*(N) \otimes H^*\left(A^{\otimes p}\right) \ar[r]& H^*\left(N \otimes A^{\otimes p}\right) \ar[r] & \Tor(H^*(N),H^*(A^{\otimes p})) \ar[r] & 0  \ . }
\end{equation*}
By the functoriality of $\Tor$ and the assumption on $f_{0,0}$, the maps $$(f_{0,0})_* \otimes \id \quad \text{and} \quad \Tor((f_{0,0})_*,\id)$$ are isomorphisms. Hence, the five-lemma implies that $(f_0)_*$ is an isomorphism as well. 

Thus, we have eventually shown that $f_*$ induces an isomorphism between the first pages of the spectral sequences of the length filtrations and therefore, by Theorem \ref{ThmMappingIso}, shown the claim.
\end{proof}

  \bibliography{/home/stephan/Dokumente/LaTeX/Draft/diss}
 \bibliographystyle{amsalpha}

\end{document}